\newtheorem{theorem}{Theorem}[section]
\newtheorem{prop}[theorem]{Proposition}
\newtheorem{lem}[theorem]{Lemma}
\newtheorem{corol}[theorem]{Corollary}
\theoremstyle{definition}
\newtheorem{defi}[theorem]{Definition}
\newtheorem{rmq}[theorem]{Remark}
\newtheorem{exmp}[theorem]{Example}
\def\Gr{{\rm{Gr}}}
\def\d{\partial}
\def\c{{\mathbf{c}}}
\def\k{{\mathbf{k}}}
\def\n{{\mathbf{n}}}
\def\s{{\mathbf{s}}}
\def\x{{\mathbf{x}}}
\def\y{{\mathbf{y}}}
\def\z{{\mathbf{z}}}
\def\add{{\rm{add}\,}}
\def\dim{{\rm{dim}\,}}
\def\ddim{{\mathbf{dim}\,}}
\def\rad{{\rm{Rad}\,}}
\def\rep{{\rm{rep}}}
\def\supp{{\rm{supp}}}
\def\<{\left<}
\def\>{\right>}
\def\ens#1{\left\{ #1 \right\}}
\def\fl{{\longrightarrow}\,}
\def\CC{{\mathcal{C}}}
\def\A{{\mathbb{A}}}
\def\modd{{\textrm{mod-}}}
\def\N{{\mathbb{N}}}
\def\P{{\mathbb{P}}}
\def\Q{{\mathbb{Q}}}
\def\Z{{\mathbb{Z}}}
\def\op{{\rm{op}}}
\def\pp{{\rm{pp}}}
\def\Ext{{\rm{Ext}}}
\def\End{{\rm{End}}}
\def\Hom{{\rm{Hom}}}
\def\Ob{{\rm{Ob}}}
\title{Friezes, Strings and Cluster Variables}
\author{Ibrahim Assem, Gr{\'e}goire Dupont, Ralf Schiffler and David Smith}
\address{Universit\'e de Sherbrooke, Sherbrooke QC, Canada
}
\email{ibrahim.assem@usherbrooke.ca}
\email{gregoire.dupont@usherbrooke.ca}
\address{University of Connecticut, Storrs CT, USA
}
\email{schiffler@math.uconn.edu}
\address{Bishop's University, Sherbrooke QC, Canada
}
\email{dsmith@ubishops.ca}
\date{\today}
\subjclass[2010]{13F60, 16G20}
\begin{document}

\begin{abstract}
	To any walk in a quiver, we associate a Laurent polynomial. When the walk is the string of a string module over a 2-Calabi-Yau tilted algebra, we prove that this Laurent polynomial coincides with the corresponding cluster character of the string module, up to an explicit normalising monomial factor.
\end{abstract}

\maketitle


\section*{Introduction}
	In the early 2000, S. Fomin and A. Zelevinsky introduced the class of cluster algebras with the purpose of building a combinatorial framework for studying total positivity in algebraic groups and canonical bases in quantum groups, see \cite{cluster1}. Since then, the study of cluster algebras was shown to be connected to several areas of mathematics, notably combinatorics, Lie theory, Poisson geometry, Teichm\"{u}ller theory, mathematical physics and representation theory of algebras.

	A cluster algebra is a commutative algebra generated by a set of variables, called \emph{cluster variables}, obtained recursively by a combinatorial process known as \emph{mutation}, starting from an initial set of cluster variables, the \emph{initial cluster}, and a quiver without cycles of length at most two. One of the most remarkable facts about cluster variables is that they can be expressed as Laurent polynomials in terms of the initial cluster variables \cite{cluster1}; this is the so-called \emph{Laurent phenomenon}. Also, it is conjectured that the coefficients in this expression are always non-negative; this is the \emph{positivity conjecture}. The problem of computing explicitly the cluster variables is a difficult one and has been extensively studied. The most general results known at the present time are in \cite{FK,Plamondon:ClusterAlgebras, MSW:positivity}.

	In order to compute cluster variables, one may use \emph{friezes}. Friezes, which go back to works of Coxeter and Coxeter-Conway \cite{C71, CoxeterConway1, CoxeterConway2}, are an efficient combinatorial tool which mimics the application of mutations on sinks or sources of the given quiver, hence an obvious combinatorial connection with cluster algebras \cite{CarollPrice, CC, Propp:frieze, Musiker:classical, ARS:frises}. It is now known that connections between friezes and cluster algebras are deeper than just combinatorics, see for instance \cite{Dupont:stabletubes,AD:algorithm,KS:frieze, BM10, FM10}. Our starting point for the present paper was the result in \cite{ARS:frises} giving an explicit formula as a product of $2\times 2$ matrices for all cluster variables in coefficient-free cluster algebras of type $\A$ and all but finitely many cluster variables in coefficient-free cluster algebras of type $\widetilde{\A}$, thus explaining at the same time the Laurent phenomenon and positivity. Our objective here is to show that the same technique can be used for computing the cluster variables associated with the string modules over a 2-Calabi-Yau tilted algebra (in the sense of \cite{Reiten:CIRM07}).

	Besides friezes, our second main tool is the notion of a \emph{cluster character}. In \cite{CC}, Caldero and\ Chapoton noticed that cluster variables
	in simply-laced coefficient-free cluster algebras of finite type can be expressed as generating series of Euler-Poincar\'{e} characteristics of Grassmannians of submodules. Generalising this work, Caldero-Keller \cite{CK2}, Palu \cite{Palu} and Fu-Keller \cite{FK} introduced the notion of a cluster character associating to each module $M$ over a 2-Calabi-Yau tilted algebra $B_{T}$ a certain Laurent polynomial $X_{M}^{T}$ allowing one to compute a corresponding cluster variable. In general, cluster characters are hard to compute because one first needs to find the Euler characteristics of Grassmannians of submodules, and then the dimensions of certain Hom-spaces in the corresponding 2-Calabi-Yau category.

	One class of algebras, however, whose representation theory is reasonably well-understood is the class of string algebras, introduced by Butler and Ringel in \cite{ButlerRingel:strings} (see also \cite{WW85}). In particular, indecomposable modules over string algebras are partitioned into two sets: string and band modules, and only string modules can be associated with cluster variables. The Euler characteristics of Grassmannians of submodules of string modules were computed by Cerulli and Haupt \cite{Cerulli:string, Haupt:string}. Nevertheless, their methods do not allow one to compute explicitly the associated cluster character.

	The main result of this paper gives an explicit formula for the cluster character associated with a string module over a 2-Calabi-Yau tilted algebra.
	This can be stated as follows. To any walk $c$ in a locally finite quiver $Q$, we associate a Laurent polynomial $L_{c}$ in the ring of Laurent polynomials in the indeterminates $x_{i}$ indexed by the set $Q_0$ of points of $Q$, which can be expressed as a product of $2\times 2$ matrices as in \cite{ARS:frises} (see Section \ref{ssection:formula} below). Now, we let $T$ be a tilting object in a Hom-finite triangulated 2-Calabi-Yau category $\mathcal{C}$ and $B_{T}=$ $\End_{\CC}(T)$ be the corresponding 2-Calabi-Yau tilted algebra whose ordinary quiver is denoted by $Q$. Moreover, to any string $B_{T}$-module $M$, we associate a tuple of integers $\n_M=(n_i)_{i\in Q_0}$ which we call the \emph{normalisation} of $M$ (see Section \ref{ssection:normalisation}), and the Laurent polynomial $L_{M}$ which is just the Laurent polynomial $L_{c}$ attached to the string $c$ of $M$ in the quiver $Q$. Using the notation $\mathbf{x}^{\mathbf{n}_{M}}=\prod_{i\in Q_{0}}x_{i}^{n_i}$, our main result (Theorem \ref{theorem:main} below) can be stated as saying that~:
	$$X_{M}^{T}=\frac{1}{\mathbf{x}^{\mathbf{n}_{M}}}L_{M}.$$

	This result entails several interesting consequences. We first obtain the positivity of the Laurent polynomial $X_{M}^{T}$ for any string $B_{T}$-module $M$ (see Corollary \ref{corol:positivity}), thus reproving a result of Cerulli and Haupt \cite{Cerulli:string, Haupt:string}. Our results also apply to the cases of string modules over cluster-tilted algebras, string modules over gentle algebras arising from unpunctured surfaces (see \cite{ABCP}) and, more generally, string modules over finite dimensional Jacobian algebras associated with quivers with potentials in the sense of \cite{DWZ:potentials}. We also obtain a new proof of the positivity conjecture for cluster algebras arising from surfaces without punctures, see \cite{ST:unpunctured, S:unpunctured2, MSW:positivity}.

	The paper is organised as follows. Section \ref{section:formula} introduces the basic definitions and presents our combinatorial formula. Section \ref{section:realisabledata} introduces the concept of \emph{realisable quadruples} which is the context in which our formula will actually compute cluster characters. This is closely related to the notion of triangulated 2-Calabi-Yau realisation in the sense of Fu and Keller \cite{FK}. Section \ref{section:clusterchar} recalls all the necessary background from \cite{Palu,FK} concerning cluster characters. Sections \ref{section:typeA} and \ref{section:main} contain the proof of our main result. In Section \ref{section:typeA}, we actually prove a weaker version of our theorem which will be used in order to prove the general case in Section \ref{section:main}. In Section \ref{section:applications} we present possible applications of the results to the study of positivity in cluster algebras. In Section \ref{section:morenormalisation}, we investigate the normalising factor and explicitly compute it for several cases of string modules over cluster-tilted algebras. The last section presents some detailed examples.

\section{The matrix formula}\label{section:formula}
	\subsection{Notations}
		Throughout the article, $\k$ denotes an algebraically closed field. Given a quiver $Q$, we denote by $Q_0$ its set of points and by $Q_1$ its set of arrows. For any arrow $\alpha \in Q_1$, we denote by $s(\alpha)$ its \emph{source} and by $t(\alpha)$ its \emph{target}. We sometimes simply write $\alpha: s(\alpha) \to t(\alpha)$ or $s(\alpha) \xrightarrow{\alpha} t(\alpha)$. For any point $i \in Q_0$, we set
		$$Q_1(i,-) = \ens{\alpha \in Q_1 \, | \, s(\alpha) = i} \textrm{, } Q_1(-,i)= \ens{\alpha \in Q_1 \, | \, t(\alpha) = i}$$
		and if $F$ is a subset of points in $Q_0$, we set
		$$Q_1(i,F) = \ens{\alpha \in Q_1 \, | \, s(\alpha) = i, \, t(\alpha) \in F} \textrm{, } Q_1(F,i) = \ens{\alpha \in Q_1 \, | \, s(\alpha) \in F, \, t(\alpha) = i}$$
		and finally, for any $i,j \in Q_0$, we set $Q_1(i,j) = Q_1(i,-) \cap Q_1(-,j)$.

		To any quiver $Q$, we associate a family $\x_Q=\ens{x_i|i \in Q_0}$ of indeterminates over $\Z$. We let $\mathcal L(\x_Q) = \Z[x_i^{\pm 1}\, |\, i \in Q_0]$ be the ring of Laurent polynomials in the variables $x_i$, with $i \in Q_0$ and $\mathcal F(\x_Q) = \Q(x_i \, |\, i \in Q_0)$ be the field of rational functions in the variables $x_i$, with $i \in Q_0$. For any $\mathbf d=(d_i)_{i \in Q_0} \in \Z^{Q_0}$, we set $\x_Q^\mathbf d = \prod_{i \in Q_0} x_i^{d_i}$.

		A \emph{bound quiver} is a pair $(Q,I)$ such that $Q$ is a finite quiver (that is, $Q_0$ and $Q_1$ are finite sets) and $I$ is an admissible ideal in the path algebra $\k Q$ of $Q$. Given a finite dimensional basic $\k$-algebra $B$, there exists a bound quiver $(Q,I)$ such that $B \simeq \k Q/ I$ and the quiver $Q$ is called the \emph{ordinary quiver} of $B$ (see for instance \cite{ASS}). We always identify the category mod-$B$ of finitely generated right $B$-modules with the category $\rep(Q,I)$ of $\k$-representations of $Q$ satisfying the relations in $I$. For a $B$-module $M$, we denote by $M(i)$ the $\k$-vector space at the point $i \in Q_0$ and $M(\alpha)$ the $\k$-linear map at the arrow $\alpha \in Q_1$.
		

	\subsection{Walks and strings}
		Let $Q$ be a quiver. For any arrow $\beta \in Q_1$, we denote by $\beta^{-1}$ a \emph{formal inverse} for $\beta$, with $s(\beta^{-1})=t(\beta)$, $t(\beta^{-1})=s(\beta)$ and we set $(\beta^{-1})^{-1} = \beta$. 

		A \emph{walk} of length $n \geq 1$ in $Q$ is a sequence $c=c_1 \cdots c_n$ where each $c_i$ is an arrow or a formal inverse of an arrow and such that $t(c_i) = s(c_{i+1})$ for any $i \in \ens{1, \ldots, n-1}$. The \emph{source of the walk} $c$ is $s(c)=s(c_1)$ and the \emph{target of the walk} $c$ is $t(c) = t(c_n)$. With any point $i \in Q_0$, we associate a walk $e_i$ of length zero which is given by the stationary path at point $i$. For any walk $c$ on $Q$, we denote by $c_0$ the walk of length zero $c_0 = e_{s(c)}$.

		If $(Q,I)$ is a bound quiver, a \emph{string} in $(Q,I)$ is either a walk of length zero or a walk $c=c_1 \cdots c_n$ of length $n \geq 1$ such that $c_i \neq c_{i+1}^{-1}$ for any $i \in \ens{1, \ldots, n-1}$ and such that no walk of the form $c_i c_{i+1} \cdots c_t$ nor its inverse belongs to $I$ for $1 \leq i$ and $t \leq n$. If $Q$ is a quiver, a \emph{string in the quiver} $Q$ is a string in the bound quiver $(Q,(0))$. If $B \simeq \k Q/I$ is a finite dimensional $\k$-algebra and $c$ is a string in $(Q,I)$, then we also say that $c$ is \emph{a string in $B$}.
	
		To any string in a finite dimensional $\k$-algebra $B$, we can naturally associate an indecomposable finite dimensional right $B$-module $M_c$, called \emph{string module} as in \cite[\S 3]{ButlerRingel:strings}. Namely, if $B$ has ordinary quiver $Q$ and $c$ is a string, we define $M_c$ as follows. If $c=e_v$ has length zero, then $M_c$ is the simple representation $S_v$ at the point $v \in Q_0$. Otherwise, $c$ is a string of length $n \geq 1$ and we write $c = \xymatrix{v_1 \ar@{-}[r]^{c_1} & \cdots \ar@{-}[r]^{c_n} & v_{n+1}}$. For any $v \in Q_0$, we set $I_v = \ens{ i \in [1,n+1] \ | \ v_i = v}$ and we define $M_c(v)$ as  the $|I_v|$-dimensional $\k$-vector space with basis $z_i$, with $i \in I_v$. For any $1 \leq i \leq n$, if $c_i = \beta \in Q_1$, we set $M_c(\beta)(z_{i-1}) = z_i$ and if  $c_i = \beta \in Q_1^{-1}$, we set $M_c(\beta)(z_{i}) = z_{i-1}$. Finally, if $\alpha: v \fl v'$ is an arrow in $Q_1$ and if $z_j$ is one of the basis vector of $M_c(v)$ such that $M_c(\alpha)(z_j)$ is not yet defined, we set $M_c(\alpha)(z_j)=0$. 

		A string module is also called a \emph{string representation} of the corresponding bound quiver. For any string module $M$, we denote by $\s(M)$ the corresponding string.


	\subsection{A formula for walks}\label{ssection:formula}
		For any locally finite quiver $Q$, we define a family of matrices with coefficients in $\Z[\x_Q] = \Z[x_i \, | \, i \in Q_0]$ as follows.

		For any arrow $\beta \in Q_1$, we set
		$$A(\beta)=\left[\begin{array}{cc}
	               x_{t(\beta)} & 0 \\
			1 & x_{s(\beta)}
		\end{array}\right]
		\textrm{ and }
		A(\beta^{-1})=\left[\begin{array}{cc}
			x_{t(\beta)} & 1 \\
				0 & x_{s(\beta)}
			\end{array}\right].$$

		Let $c=c_1 \cdots c_n$ be a walk of length $n \geq 1$ in $Q$. For any $i \in \ens{0, \ldots, n}$ we set
		$$v_{i+1} = t(c_i)$$
		(still with the notation $c_0 = e_{s(c)}$) and
		$$V_{c}(i)=\left[\begin{array}{cc}
			\displaystyle \prod_{\substack{\alpha \in Q_1(v_i,-) \\ \alpha \neq c_i^{\pm 1},c_{i-1}^{\pm 1}}} x_{t(\alpha)} & 0 \\
			0 & \displaystyle \prod_{\substack{\alpha \in Q_1(-,v_i) \\ \alpha \neq c_i^{\pm 1},c_{i-1}^{\pm 1}}} x_{s(\alpha)}\\
		\end{array}\right].$$

		We then set
		$$L_c = \frac{1}{x_{v_1} \cdots x_{v_{n+1}}} \left[\begin{array}{cc} 1,1 \end{array}\right] V_{c}(1) \left( \prod_{i=1}^n A(c_i) V_{c}({i+1})\right) \left[\begin{array}{c}1 \\ 1 \end{array}\right] \in \mathcal L(\x_Q).$$

		If $c=e_i$ is a walk of length 0 at a point $i$, we similarly set
		$$V_{e_i}(1)=\left[\begin{array}{cc}
			\displaystyle \prod_{\alpha \in Q_1(i,-)} x_{t(\alpha)} & 0 \\
			0 & \displaystyle \prod_{\alpha \in Q_1(-,i)} x_{s(\alpha)}\\
		\end{array}\right].$$
		and
		$$L_{e_i}=\frac{1}{x_i} \left[\begin{array}{cc} 1,1 \end{array}\right] V_{e_i}(1) \left[\begin{array}{c}1 \\ 1 \end{array}\right] \in \mathcal L(\x_Q).$$	

		In other words, if $c$ is any walk, either of length zero, or of the form $c=c_1 \cdots c_n$, we have
		\begin{equation}\label{eq:Lc}
			L_c = \frac{1}{\prod_{i=0}^n x_{t(c_i)}} \left[\begin{array}{cc} 1,1 \end{array}\right] \left( \prod_{i=0}^n A(c_i) V_{c}(i+1)\right) \left[\begin{array}{c}1 \\ 1 \end{array}\right] \in \mathcal L(\x_Q)
		\end{equation}
		with the convention that $A(c_0)$ is the identity matrix.

		\begin{exmp}
			Consider the quiver
			$$\xymatrix{
				&&& 4 \ar[rd]^{\gamma} \\
				Q: & 1 \ar[r]^{\alpha} & 2 \ar[ru]^{\beta} \ar[rr]^{\delta} && 3 \ar[r]^{\epsilon} & 5
			}$$
			and consider the path $c = \delta^{-1}\beta\gamma$ in $Q$. Then,
			\begin{align*}
			L_c 	
				& = \frac{1}{x_2 x_3^2 x_4} \left(
				\left[\begin{array}{cc} 1,1 \end{array}\right]
				\left[\begin{array}{cc} x_5 & 0 \\ 0 & x_4 \end{array}\right]
				\left[\begin{array}{cc} x_3 & 1 \\ 0 & x_2 \end{array}\right]
				\left[\begin{array}{cc} 1 & 0 \\ 0 & x_1 \end{array}\right]
				\right.\\
				& \left.
				\left[\begin{array}{cc} x_4 & 0 \\ 1 & x_2 \end{array}\right]
				\left[\begin{array}{cc} 1 & 0 \\ 0 & 1 \end{array}\right]
				\left[\begin{array}{cc} x_3 & 0 \\ 1 & x_4 \end{array}\right]
				\left[\begin{array}{cc} x_5 & 0 \\ 0 & x_2 \end{array}\right]
				\left[\begin{array}{c}1 \\ 1 \end{array}\right] \right)\\
				& = \frac{x_1x_2^3x_4^2 + 2x_1x_2^2x_4x_5 + x_1x_2x_3x_4x_5 + x_3^2x_4x_5^2 + x_1x_2x_5^2 + x_1x_3x_5^2}{x_2x_3^2x_4}
			\end{align*}
		\end{exmp}

	\subsection{A formula for string modules}
		\begin{defi}
			Let $B$ be a finite dimensional $\k$-algebra with bound quiver $(Q,I)$ and let $M$ be a string $B$-module with corresponding string $\s(M)$. We set
			$$L_M = L_{\s(M)} \in \mathcal L(\x_Q).$$
		\end{defi}

		If $Q$ is a quiver, a \emph{subquiver} $R$ of $Q$ is a quiver $R$ such that $R_0 \subset Q_0$ and such that for any $i,j \in R_0$, the set of arrows from $i$ to $j$ in $R_1$ is a subset of the set of arrows from $i$ to $j$ in $Q_1$. If $R$ is a subquiver of $Q$, we naturally identify $\mathcal L(\x_R)$ with a subring of $\mathcal L(\x_Q)$.

		A subquiver $R$ of $Q$ is called a \emph{full subquiver} if for all $i,j \in R_0$ the set of arrows from $i$ to $j$ in $R_1$ equals the set of arrows from $i$ to $j$ in $Q_1$. If $B$ is a finite dimensional $\k$-algebra with bound quiver $(Q,I)$ and $M$ is a $B$-module, the \emph{support} of $M$ is the full subquiver $\supp(M)$ of $Q$ consisting of the points $i \in Q_0$ such that $M(i) \neq 0$. The \emph{closure of the support of $M$} is the full subquiver $\overline{\supp(M)}$ of $Q$ formed by the points $i \in Q_0$ which are in the support of $M$ or such that there exists an arrow $\alpha$ such that $s(\alpha) \in \supp(M)_0$ and $t(\alpha)=i$ or $t(\alpha) \in \supp(M)_0$ and $s(\alpha)=i$.

		With these identifications, if $B$ is a finite dimensional $\k$-algebra and $M$ is a string $B$-module, then
		$$L_M \in \mathcal L(\x_{\overline{\supp(M)}}).$$

\section{Realisable quadruples}\label{section:realisabledata}
	In the previous section we associated with any string module $M$ over a finite dimensional algebra $B$ a certain Laurent polynomial $L_M$. In this section we provide a context in which the algebra $B$ arises in connection with some cluster algebras so that we can compare the Laurent polynomials $L_M$ with cluster variables or, more generally, with cluster characters. The context in which we work is the context of triangulated 2-Calabi-Yau realisations introduced in \cite{FK}.

	\subsection{Definitions}
		An \emph{ice quiver} is a pair $(\mathcal Q,F)$ such that $\mathcal Q$ is a finite connected quiver without loops and 2-cycles and $F$ is a (possibly empty) subset of points of $\mathcal Q_0$, called \emph{frozen points}, such that there are no arrows between points in $F$. The \emph{unfrozen part} of $(\mathcal Q,F)$ is the full subquiver of $\mathcal Q$ obtained by deleting the points in $F$.

		For any quiver $Q$, we denote by $B(Q) =(b_{ij}) \in M_{Q_0}(\Z)$ the skew-symmetric matrix defined for any $i,j \in Q_0$ by
		$$b_{ij} = |Q_1(i,j)| - |Q_1(j,i)|.$$

		If $(\mathcal Q,F)$ is an ice quiver with unfrozen part $Q$, then we can fix an ordering of the points in $\mathcal Q_0$ such that
		$$B(\mathcal Q) =
		\left[\begin{array}{cc}
			B(Q) & -C^t \\
			C & 0_{F \times F}
		\end{array}\right]$$
		where $C$ is a matrix in $M_{F \times Q_0}(\Z)$.

		For any ice quiver $(\mathcal Q,F)$ with unfrozen part $Q$, we denote by $\mathcal A(\mathcal Q,F)$ the cluster algebra of geometric type with initial seed $(\widetilde B(\mathcal Q),\mathbf x, \mathbf y)$ where
		$$\widetilde B(\mathcal Q) =
		\left[\begin{array}{c}
			B(Q) \\
			C
		\end{array}\right],$$
		$\mathbf x=(x_i, i \in Q_0)$ and $\mathbf y=(x_i, i \in F)$ (see \cite{cluster4}).

		A $\k$-linear category $\CC$ is called \emph{Hom-finite} if $\Hom_{\CC}(M,N)$ is a finite dimensional $\k$-vector space for any two objects $M,N$ in $\CC$. A $\k$-linear triangulated category $\CC$ is called \emph{$2$-Calabi-Yau} if there is a bifunctorial isomorphism
		$$\Hom_{\CC}(X,Y) \simeq D \Hom_{\CC}(Y, X[2])$$
		where $D=\Hom_{\k}(-,\k)$ is the standard duality and $[1]$ denotes the suspension functor.

		For any objects $M,N$ in $\CC$, we denote by $\Ext^1_{\CC}(M,N)$ the space $\Hom_{\CC}(M,N[1])$. An object $M$ in the category $\CC$ is called \emph{rigid} if $\Ext^1_{\CC}(M,M)=0$. An object $T$ in $\CC$ is called a \emph{tilting object} in $\CC$ if for any object $X$ in $\CC$, the equality $\Ext^1_{\CC}(T,X) = 0$ is equivalent to the fact that $X$ belongs to the additive subcategory $\add T$ of $\CC$. Note that these objects are sometimes called \emph{cluster-tilting} in the literature. It is known that the combinatorics of cluster algebras are closely related to the combinatorics of tilting objects in triangulated 2-Calabi-Yau categories \cite{BIRS}.

		\begin{defi}
			A \emph{realisable quadruple} is a quadruple $(\mathcal Q, F,\CC,T)$ such that~:
			\begin{enumerate}[(a)]
				\item $(\mathcal Q,F)$ is an ice quiver~;
				\item $\CC$ is a Hom-finite triangulated 2-Calabi-Yau category whose tilting objects form a cluster structure in the sense of \cite{BIRS}~;
				\item $T$ is a tilting object in $\CC$~;
				\item the ordinary quiver of $B_T=\End_{\CC}(T)$ is $\mathcal Q$.
			\end{enumerate}
		\end{defi}
		Following Reiten \cite{Reiten:CIRM07}, every algebra of the form $\End_{\CC}(T)$ as above is called a \emph{2-Calabi-Yau tilted algebra}. In order to simplify terminology, a category $\CC$ satisfying (b) is simply called a \emph{triangulated 2-Calabi-Yau category}.

		\begin{defi}
			Given a realisable quadruple $(\mathcal Q, F, \CC,T)$, a $B_T$-module $M$ is called~:
			\begin{enumerate}[a)]
				\item \emph{unfrozen} if $M(i) \neq 0$ implies $i \not \in F$~;
				\item \emph{unfrozen indecomposable} if it is unfrozen and indecomposable as a $B_T$-module~;
				\item \emph{unfrozen sincere} if $M(i) \neq 0$ if and only if $i \not \in F$.
			\end{enumerate}
		\end{defi}

	\subsection{Examples}
		The notion of realisable quadruple covers a lot of situations in the context of cluster algebras. We now list some examples of such situations.

		\begin{exmp}\label{exmp:realacyclic}
			Let $Q$ be an acyclic quiver and let $\CC$ be the \emph{cluster category} of $Q$, first defined in \cite{BMRRT} (see \cite{CCS1} for an alternative description in Dynkin type $\A$). It is canonically equipped with a structure of triangulated 2-Calabi-Yau category \cite{K} and for any tilting object $T$ in $\CC$, an algebra of the form $\End_{\CC}(T)$ is called a \emph{cluster-tilted algebra}, as defined in \cite{BMR1} (see also \cite{CCS1}). Thus, if $Q_T$ is the ordinary quiver of the cluster-tilted algebra $\End_{\CC}(T)$, the quadruple $(Q_T,\emptyset, \CC,T)$ is realisable and every $B_T$-module is unfrozen.
		\end{exmp}

		\begin{exmp}\label{exmp:QP}
			Let $(Q,W)$ be a Jacobi-finite quiver with potential, that is, a quiver with potential in the sense of \cite{DWZ:potentials} such that the Jacobian algebra $\mathcal J_{(Q,W)}$ is finite dimensional. Let $\CC_{(Q,W)}$ be the generalised cluster category constructed by Amiot \cite[\S 3]{Amiot:clustercat}. Then $\CC_{(Q,W)}$ is a triangulated 2-Calabi-Yau category and there exists a tilting object $T$ in $\CC_{(Q,W)}$ such that $(Q,\emptyset,\CC_{(Q,W)},T)$ is a realisable quadruple \cite[Theorem 3.6]{Amiot:clustercat}.

			This example is of particular interest for the study of cluster algebras arising from surfaces in the sense of \cite{FST:surfaces} (see also Section \ref{ssection:positivity} for more details). Indeed, Labardini associated a non-degenerate Jacobi-finite quiver with potential $(Q,W)$ to any marked surface $(S,M)$ with non-empty boundary \cite{Labardini:potentialssurfaces}. Thus, the generalised cluster category $\CC_{(Q,W)}$ provides a categorification for the cluster algebra $\mathcal A(S,M)$ associated with the surface. Moreover, if the surface is \emph{unpunctured}, that is, if there are no marked points in the interior of the surface, it is known that the Jacobian algebra $\mathcal J_{(Q,W)}$ is a string algebra (it is in fact a gentle algebra, see \cite{ABCP}). The $\mathcal J_{(Q,W)}$-modules without self-extension are thus string modules and it follows from \cite{FK} that the cluster variables in $\mathcal A(S,M)$ can be studied via cluster characters associated with string $\mathcal J_{(Q,W)}$-modules (see Section \ref{section:clusterchar} for details).
		\end{exmp}

		\begin{exmp}
			Let $A$ be a finite dimensional $\k$-algebra of global dimension 2 and let $\widetilde Q$ be the ordinary quiver of the relation extension of $A$, that is, the trivial extension of $A$ by the $A$-bimodule $\Ext^2_A(DA,A)$ (see for instance \cite{ABS:ext}). Then Amiot also associated with $A$ a generalised cluster category $\CC_A$ and proved that there exists a tilting object $T$ in $\CC_A$ such that $(\widetilde Q,\emptyset,\CC_A,T)$ is a realisable quadruple \cite[Theorem 4.10]{Amiot:clustercat}.
		\end{exmp}

		\begin{exmp}\label{exmp:Qpp}
			We now give a fundamental example with a non-empty set of frozen points. For any quiver $Q$, we denote by $(Q^{\pp},Q_0')$ its \emph{principal extension}. It is defined as follows. We fix a copy $Q_0'=\ens{i' \, | \, i \in Q_0}$ of $Q_0$ and we set $Q^\pp_0 = Q_0 \sqcup Q_0'$. The arrows in $Q^\pp_1$ between two points in $Q_0$ are the same as in $Q$ and for any $i \in Q_0$ we add an extra arrow $i' \fl i$ in $Q^\pp_1$. Thus, the matrix of $Q^\pp$ is given by
			$$B(Q^\pp) =
			\left[\begin{array}{cc}
				B(Q) & -I_{Q_0'} \\
				I_{Q_0'} & 0
			\end{array}\right]$$
			where $I_{Q_0'}$ denotes the identity matrix in $M_{Q_0'}(\Z)$.
			
			Now, if $Q$ is the ordinary quiver of a cluster-tilted algebra $B_T$, it is known that there exists a 2-Calabi-Yau category $\CC^\pp$, endowed with a tilting object $T^\pp$, obtained via a process of \emph{principal gluing}, such that $(Q^\pp,Q_0',\CC^\pp,T^\pp)$ is a realisable quadruple for which every $B_T$-module can naturally be viewed as an unfrozen module (see \cite[\S 6.3]{FK} or Corollary \ref{corol:clustertilted}).
		\end{exmp}

	\subsection{Blown-up ice quivers and their realisations}
		In the proof of the main theorem of this article, we are interested in a particular family of ice quivers, called \emph{blown-up}. We now give some details concerning these quivers.

		\begin{defi}
			We say that an ice quiver $(\mathcal Q,F)$ is \emph{blown-up} if $|\mathcal Q_1(i,-) \cup \mathcal Q_1(-,i)| \leq 1$ for every point $i \in F$, that is, if there exists at most one arrow starting or ending at any frozen point.
		\end{defi}

		\begin{exmp}
			\begin{enumerate}[a)]
				\item Any ice quiver with an empty set of frozen points is blown-up.
				\item If $Q$ is any quiver, then its principal extension $(Q^\pp,Q_0')$ defined in Example \ref{exmp:Qpp} is blown-up.
			\end{enumerate}
		\end{exmp}

		\begin{rmq}
			Any blown-up ice quiver $(\mathcal Q,F)$ whose unfrozen part is acyclic can be embedded in a realisable quadruple $(\mathcal Q,F,\CC,T)$. Indeed, since $(\mathcal Q,F)$ is blown-up with an acyclic unfrozen part, $\mathcal Q$ is also acyclic. Let thus $\CC$ be the cluster category of the quiver $\mathcal Q$. Then the path algebra $\k \mathcal Q$ is identified with a tilting object in $\CC$ and the corresponding cluster-tilted algebra is isomorphic to $\k \mathcal Q$ so that its ordinary quiver is $\mathcal Q$. Thus $(\mathcal Q,F,\CC,\k \mathcal Q)$ is a realisable quadruple.
		\end{rmq}

		We can construct a wide class of examples of realisable quadruples with the following proposition, due to Amiot~:
		\begin{prop}[\cite{Amiot:clustercat}]\label{prop:glueingQP}
			If $\CC_1$ and $\CC_2$ are two generalised cluster categories associated with Jacobi-finite quivers with potentials $(Q_1,W_1)$ and $(Q_2,W_2)$, then for any matrix $C$ with non-negative integer entries, there exists a Jacobi-finite quiver with potential $(Q',W')$ whose generalised cluster category $\CC_{(Q',W')}$ has a tilting object $T'$ such that the matrix associated with the ordinary quiver of the 2-Calabi-Yau tilted algebra $\End_{\CC_{(Q',W')}}(T')$ is
			$$B' = \left[\begin{array}{cc}
				B(Q_1) & -C^t \\
				C & B(Q_2)
			\end{array}\right].$$
		\end{prop}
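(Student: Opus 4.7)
The plan is to construct $(Q', W')$ explicitly and then invoke Amiot's construction of generalised cluster categories from Jacobi-finite quivers with potentials, as recalled in Example \ref{exmp:QP}. Define $Q'$ by $Q'_0 = (Q_1)_0 \sqcup (Q_2)_0$, and take as arrows all arrows of $Q_1$, all arrows of $Q_2$, and, for every pair $(j,i) \in (Q_2)_0 \times (Q_1)_0$, exactly $C_{ji}$ new arrows from $j$ to $i$. A direct computation of $B(Q')$ from its definition yields the block form prescribed by $B'$. Take then $W' = W_1 + W_2$, viewed as an element of the complete path algebra of $Q'$; this is a well-defined potential because $Q'$ contains no arrows from $(Q_1)_0$ back to $(Q_2)_0$, so no oriented cycle of $Q'$ uses any of the new arrows and all cyclic terms of $W'$ remain inside either $Q_1$ or $Q_2$.

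The cyclic derivatives then satisfy $\partial_\alpha W' = \partial_\alpha W_i$ for $\alpha \in (Q_i)_1$, $i=1,2$, and $\partial_\alpha W' = 0$ for every new arrow. Every path in $Q'$ uniquely decomposes as a (possibly empty) path inside $Q_2$, followed by at most one new arrow, followed by a (possibly empty) path inside $Q_1$; modding out by the Jacobian ideal, each of these factors lies in $\mathcal J(Q_2, W_2)$, $\k$, or $\mathcal J(Q_1, W_1)$, respectively. Hence
$$\dim_\k \mathcal J(Q', W') \leq \dim_\k \mathcal J(Q_2, W_2) \cdot (1+N) \cdot \dim_\k \mathcal J(Q_1, W_1),$$
where $N$ is the total number of new arrows, so $(Q', W')$ is Jacobi-finite. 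Applying Amiot's theorem \cite[Theorem 3.6]{Amiot:clustercat} then yields the triangulated 2-Calabi-Yau category $\CC_{(Q', W')}$ together with a canonical tilting object $T'$ whose endomorphism algebra is isomorphic to $\mathcal J(Q', W')$.

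The main obstacle is to verify that the ordinary quiver of this endomorphism algebra is exactly $Q'$, rather than some proper subquiver obtained by collapsing arrows that become linear combinations of longer paths in the Jacobian algebra. This is handled by insisting that $W'$ be reduced in the sense of \cite{DWZ:potentials}, i.e.\ contain no cyclic component of length less than $3$; since $W_1$ and $W_2$ are already reduced by hypothesis and no new cycles are introduced by the gluing construction, $W'$ itself is automatically reduced. Consequently, its cyclic derivatives lie in the square of the arrow ideal of the complete path algebra, no arrow of $Q'$ is annihilated when passing to $\mathcal J(Q', W')$, and the ordinary quiver of $\End_{\CC_{(Q', W')}}(T')$ coincides with $Q'$, whose associated skew-symmetric matrix is the desired $B'$.
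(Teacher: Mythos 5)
The paper does not prove this proposition; it is quoted directly from Amiot \cite{Amiot:clustercat}, so there is no ``paper proof'' to compare against. That said, your reconstruction matches the standard gluing construction underlying Amiot's result: take $Q'$ to be the disjoint union of $Q_1$ and $Q_2$ with $C_{ji}$ extra arrows from $j\in(Q_2)_0$ to $i\in(Q_1)_0$, take $W'=W_1+W_2$ (a legitimate potential since the new arrows create no new cycles), and then apply Amiot's Theorem~3.6. Your computation of $B(Q')$ and the path-factorisation argument for Jacobi-finiteness are both correct in substance, even though the displayed dimension bound is a little informal (the middle factor should really run over the finitely many new arrows together with the ``no new arrow'' case, which corresponds to the subspaces $\mathcal J(Q_1,W_1)$ and $\mathcal J(Q_2,W_2)$ themselves).

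One point you should tighten: you assert that $W_1$ and $W_2$ are ``reduced by hypothesis,'' but reducedness is not among the stated hypotheses. To justify that the ordinary quiver of $\End_{\CC_{(Q',W')}}(T')$ is exactly $Q'$, you should either (i) remark that one may replace each $(Q_i,W_i)$ by its reduced part via the Derksen--Weyman--Zelevinsky splitting theorem, which does not change the Jacobian algebra up to isomorphism and is the convention under which the statement $B(Q_i)$ makes sense, or (ii) observe that the statement is implicitly taken with $(Q_i,W_i)$ already reduced, as otherwise $B(Q_i)$ would not be the $B$-matrix of the ordinary quiver of $\mathcal J(Q_i,W_i)$. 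Once reducedness of the $W_i$ is in place, your observation that $W'$ is also reduced, so that $\partial_\alpha W'$ lies in the square of the arrow ideal and no arrow of $Q'$ collapses in $\mathcal J(Q',W')$, correctly finishes the argument.
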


		\begin{corol}\label{corol:glueing}
			Let $(\mathcal Q,F)$ be a blown-up ice quiver with unfrozen part $Q$. Assume that there exists a potential $W$ on $Q$ such that the Jacobian algebra $\mathcal J_{(Q,W)}$ is finite dimensional. Then there exists a triangulated 2-Calabi-Yau category $\CC$ and a tilting object $T$ in $\CC$ such that $(\mathcal Q,F,\CC,T)$ is a realisable quadruple.
		\end{corol}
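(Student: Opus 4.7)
The plan is to apply Proposition \ref{prop:glueingQP} twice, once for each direction of arrow incident to a frozen vertex. The blown-up hypothesis is crucial: since each vertex of $F$ carries at most one arrow, the off-diagonal block $C$ appearing in $B(\mathcal Q) = \begin{bmatrix} B(Q) & -C^T \\ C & 0 \end{bmatrix}$ has at most one non-zero entry per row, equal to $\pm 1$ according to the direction of the unique incident arrow. Hence $C$ decomposes as the difference of two non-negative integer matrices, each of which can be fed into Amiot's gluing.

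Concretely, partition $F = F_+ \sqcup F_0 \sqcup F_-$, where $F_+$ (resp.\ $F_-$, $F_0$) consists of the frozen vertices that are the source of (resp.\ target of, not incident to) an arrow of $\mathcal Q$. Let $C_+$ be the non-negative $(|F_+| + |F_0|) \times |Q_0|$ matrix encoding arrows from $F_+$ to $Q_0$ (with vanishing rows on $F_0$), and let $D_-$ be the non-negative $|Q_0| \times |F_-|$ matrix encoding arrows from $Q_0$ to $F_-$. For the first gluing, apply Proposition \ref{prop:glueingQP} to the Jacobi-finite QPs $(Q, W)$ and $(F_+ \sqcup F_0, 0)$ --- the latter being the discrete quiver with zero potential, which is trivially Jacobi-finite --- using the matrix $C_+$. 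This produces a Jacobi-finite QP $(Q^{(1)}, W^{(1)})$ whose 2-Calabi-Yau tilted algebra has matrix $\begin{bmatrix} B(Q) & -C_+^T \\ C_+ & 0 \end{bmatrix}$. No $2$-cycles are introduced, so $Q^{(1)}$ is exactly $Q$ together with the arrows of $\mathcal Q$ from $F_+$ to $Q_0$.

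For the second gluing, apply the proposition again to $(F_-, 0)$ and $(Q^{(1)}, W^{(1)})$ using the non-negative matrix $\widetilde D_- = \begin{bmatrix} D_- \\ 0 \end{bmatrix} \in M_{Q^{(1)}_0 \times F_-}(\Z_{\geq 0})$; the vanishing block on the $F_+$-rows reflects the absence of arrows between frozen vertices. A row-column permutation putting the blocks in the order $(Q_0, F_+, F_-)$ identifies the output matrix with $B(\mathcal Q)$, and since again no $2$-cycles appear, the resulting quiver is $\mathcal Q$. Letting $\CC$ be the generalised cluster category of this Jacobi-finite QP and $T$ its canonical tilting object (Example \ref{exmp:QP}), $\End_{\CC}(T)$ has ordinary quiver $\mathcal Q$, while condition (b) in the definition of a realisable quadruple is a standard feature of generalised cluster categories of Jacobi-finite QPs. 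Thus $(\mathcal Q, F, \CC, T)$ is realisable.

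The main obstacle is nothing conceptual but is purely a matter of bookkeeping: verifying that the sign conventions of $B(\mathcal Q)$, the order of the factors in each application of the proposition, and the block structures of $C_+$ and $\widetilde D_-$ all align so that the permuted final matrix really equals $B(\mathcal Q)$ under the partition $\mathcal Q_0 = Q_0 \sqcup F$. Once this compatibility is checked, the conclusion follows directly from Proposition \ref{prop:glueingQP}.
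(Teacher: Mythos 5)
Your proof is correct and uses the same essential tool, Proposition~\ref{prop:glueingQP}, but organises the gluings differently from the paper. The paper's proof proceeds one frozen vertex at a time: it builds an inductive chain $\CC^{(0)}, \CC^{(1)}, \ldots, \CC^{(m)}$ by gluing a single copy of the cluster category of type $\A_1$ at each step, choosing the order of the two factors $(\CC_1,\CC_2)$ in Proposition~\ref{prop:glueingQP} according to the direction of the unique arrow at the $i$-th frozen vertex. You instead batch the frozen vertices by direction, performing exactly two gluings: first attaching all of $F_+\sqcup F_0$ at once (via the discrete QP on that set) and then attaching $F_-$, using that the block $C$ appearing in $B(\mathcal Q)$ splits into a non-negative part from $F_+$ and a non-negative part from $F_-$ precisely because each frozen vertex has arrows in only one direction. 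The blown-up hypothesis plays the same role in both arguments — it guarantees that each frozen vertex contributes arrows of a single orientation, so the off-diagonal block fed into Proposition~\ref{prop:glueingQP} can be taken non-negative — and in both cases the intermediate endomorphism algebras remain Jacobian algebras of Jacobi-finite QPs so the proposition can be applied again. Your batched version is arguably cleaner (no induction, and the matrix bookkeeping is explicit), at the minor cost of invoking the generalised cluster category of a possibly disconnected discrete quiver rather than only of $\A_1$; Amiot's construction handles this without difficulty, so both proofs are valid.
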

		\begin{proof}
			We let $\CC^{(0)}$ be the generalised cluster category associated with the quiver with potential $(Q,W)$. According to Example \ref{exmp:QP}, there exists a tilting object $T^{(0)}$ in $\CC$ such that the quadruple $(Q,\emptyset,\CC^{(0)},T^{(0)})$ is realisable.

			In order to simplify notations, we write $F = \ens{1, \ldots, m}$. For any $i \in F$, we denote by $\CC_{\A_i}$ a copy of the cluster category of type $\A_1$, which is a particular case of generalised cluster category in the sense of Amiot. We now construct, by induction on $i \in \ens{1, \ldots, m}$, a category $\CC^{(i)}$ with a tilting object $T^{(i)}$ such that the ordinary quiver of the 2-Calabi-Yau tilted algebra $\End_{\CC^{(i)}}(T^{(i)})$ is the full subquiver $(\mathcal Q,F)$ formed by points in $Q$ and frozen points in $\ens{1, \ldots, i}$.

			Since $(\mathcal Q,F)$ is blown-up, for every $i \in \ens{1, \ldots, m}$, there exists a unique arrow $\alpha_i \in \mathcal Q_1(i,-) \sqcup \mathcal Q_1(-,i)$. If $\alpha_i \in \mathcal Q_1(i,-)$, we construct the category $\CC^{(i)}$ gluing $\CC^{(i-1)}$ and $\CC_{\A_i}$ as provided by Proposition \ref{prop:glueingQP} with $\CC_1 = \CC^{(i-1)}$ and $\CC_2 = \CC_{\A_i}$ and we denote by $T^{(i)}$ the canonical tilting object in this gluing. If $\alpha_i \in \mathcal Q(-,i)$, we construct a gluing $\CC^{(i)}$ of $\CC_{\A_i}$ and $\CC^{(i-1)}$ as provided by Proposition \ref{prop:glueingQP} with $\CC_1 = \CC_{\A_i}$ and $\CC_2 = \CC^{(i-1)}$ and we denote by $T^{(i)}$ the canonical tilting object in this gluing. We finally set $\widetilde {\CC} = \CC^{(m)}$ and $\widetilde T = T^{(m)}$.
		\end{proof}

		\begin{corol}\label{corol:clustertilted}
			Let $(\mathcal Q,F)$ be a blown-up ice quiver whose unfrozen part is the ordinary quiver of a cluster-tilted algebra. Then $(\mathcal Q,F)$ can be embedded in a realisable quadruple $(\mathcal Q,F,\CC,T)$.
		\end{corol}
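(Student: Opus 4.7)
The plan is to reduce the statement to Corollary \ref{corol:glueing}, which already handles any blown-up ice quiver whose unfrozen part admits a presentation as the Jacobian algebra of a finite-dimensional quiver with potential. Thus the whole task boils down to exhibiting, on the unfrozen part $Q$ of $(\mathcal Q,F)$, a potential $W$ such that the Jacobian algebra $\mathcal J_{(Q,W)}$ is finite dimensional.

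For this, I would invoke the known fact that every cluster-tilted algebra can be realised as a Jacobian algebra. More precisely, if $Q$ is the ordinary quiver of a cluster-tilted algebra $B_{T_0} = \End_{\CC_0}(T_0)$, where $\CC_0$ is the cluster category of some acyclic quiver and $T_0$ is a tilting object as in Example \ref{exmp:realacyclic}, then by results of Keller and Yang (building on \cite{DWZ:potentials}; see also \cite{BIRS}) there is a potential $W$ on $Q$ such that $\mathcal J_{(Q,W)} \simeq B_{T_0}$. Since cluster-tilted algebras are finite dimensional, so is $\mathcal J_{(Q,W)}$.

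Once this is established, Corollary \ref{corol:glueing} applied to the blown-up ice quiver $(\mathcal Q,F)$ together with the potential $W$ on its unfrozen part yields a triangulated 2-Calabi-Yau category $\CC$ and a tilting object $T$ in $\CC$ such that $(\mathcal Q,F,\CC,T)$ is a realisable quadruple, which is exactly what we want.

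The only nontrivial input is the existence of the Jacobian presentation of the cluster-tilted algebra; the rest is a direct invocation of Corollary \ref{corol:glueing}. Thus the proof is essentially a one-line argument citing the Jacobian realisation of cluster-tilted algebras and then appealing to the preceding corollary.
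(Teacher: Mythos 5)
Your proof matches the paper's argument exactly: both invoke the fact that every cluster-tilted algebra is the Jacobian algebra of a Jacobi-finite quiver with potential and then apply Corollary~\ref{corol:glueing}. The only minor difference is that the paper cites \cite{Keller:deformed} and \cite{BIRSm} for the Jacobian realisation rather than Keller--Yang, but the substance of the argument is identical.
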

		\begin{proof}
			It is proved in \cite{Keller:deformed} and \cite{BIRSm} that any cluster-tilted algebra is the Jacobian algebra of a Jacobi-finite quiver with potential. The result thus follows from Corollary \ref{corol:glueing}.
		\end{proof}

\section{Cluster characters with coefficients}\label{section:clusterchar}
	In this section, we collect some background concerning Fu-Keller's cluster characters \cite{FK}. These characters allow one to give an explicit realisation of the elements in a cluster algebra in terms of the geometry and the homology underlying a 2-Calabi-Yau category.

	We fix a triangulated 2-Calabi-Yau category $\CC$ with suspension functor $[1]$ and we fix a tilting object $T$ in $\CC$.

	We let $\mathcal Q$ be the ordinary quiver of the 2-Calabi-Yau tilted algebra $B_T = \End_{\CC}(T)$. Indecomposable direct summands of $T$ label the points in $\mathcal Q_0$. Let $F$ be a set of frozen points in $\mathcal Q_0$. We set $T_F = \bigoplus_{i \in F}T_i$. Consider the full subcategory $\mathcal U$ of $\CC$ formed by the objects $X$ such that $\Hom_{\CC}(T_F,X)=0$. Let $(\add T[1])$ be the ideal consisting of those morphisms factoring through objects of $\add T[1]$.

	\begin{theorem}[\cite{BMR1,KR:clustertilted}]
		The functor $\Hom_{\CC}(T,-)$ induces an equivalence
		$$\Hom_{\CC}(T,-)~: \CC/(\add T[1]) \xrightarrow{\sim} \modd B_T.$$	
	\end{theorem}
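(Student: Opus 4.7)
The plan is to proceed in three steps: verify that $\Hom_{\CC}(T,-)$ descends to the quotient $\CC/(\add T[1])$, establish density by producing objects from projective presentations, and deduce fullness and faithfulness simultaneously from the approximation triangles that exist thanks to the cluster-tilting property of $T$.

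For well-definedness on the quotient, one uses that $T$ is tilting (hence rigid), which gives $\Hom_{\CC}(T,T[1]) = \Ext^{1}_{\CC}(T,T) = 0$. Consequently, any morphism factoring through an object of $\add T[1]$ is sent to zero by $\Hom_{\CC}(T,-)$, so the functor factors through $\CC/(\add T[1])$.

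For density, the key tool is the existence, for each $X \in \CC$, of a triangle $T_{1}^{X} \to T_{0}^{X} \to X \to T_{1}^{X}[1]$ with $T_{0}^{X}, T_{1}^{X} \in \add T$. Such a triangle is obtained by choosing a right $\add T$-approximation $T_{0}^{X} \to X$, which exists because $\CC$ is Hom-finite and $\add T$ is functorially finite, and then using the defining property of a tilting object to conclude that the rotated cone lies in $\add T[1]$. Applying $\Hom_{\CC}(T,-)$ to this triangle and invoking $\Ext^{1}_{\CC}(T,T)=0$ produces the exact sequence
$$\Hom_{\CC}(T,T_{1}^{X}) \longrightarrow \Hom_{\CC}(T,T_{0}^{X}) \longrightarrow \Hom_{\CC}(T,X) \longrightarrow 0,$$
which is a projective presentation of $\Hom_{\CC}(T,X)$ in $\modd B_{T}$. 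Conversely, given a module $M$ with projective presentation $P_{1} \to P_{0} \to M \to 0$, the bijection $\Hom_{\CC}(T,-) : \add T \to \mathrm{proj}\,B_{T}$ lifts $P_{1} \to P_{0}$ to $T_{1} \to T_{0}$, and the cone of this morphism is an object $X \in \CC$ with $\Hom_{\CC}(T,X) \simeq M$, proving density.

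For fullness and faithfulness, fix objects $X, Y \in \CC$ together with their approximation triangles, and consider the induced map
$$\Hom_{\CC/(\add T[1])}(X,Y) \longrightarrow \Hom_{B_{T}}\bigl(\Hom_{\CC}(T,X),\Hom_{\CC}(T,Y)\bigr).$$
Fullness follows by lifting any morphism of projective presentations to a morphism of the corresponding approximation triangles, using again that $\Hom_{\CC}(T,-)$ is bijective on morphisms in $\add T$; the induced morphism $X \to Y$ between cones realises the given $B_{T}$-linear map. Faithfulness is the subtle step: given $g : X \to Y$ with $\Hom_{\CC}(T,g)=0$, one applies $\Hom_{\CC}(T_{0}^{X},-)$ and $\Hom_{\CC}(T_{1}^{X}[1],-)$ to the approximation triangle of $Y$ and chases through the long exact sequences, exploiting $\Hom_{\CC}(T,T[1])=0$ to show that $g$ factors as $X \to T_{1}^{X}[1] \to Y$, i.e.\ through an object of $\add T[1]$.

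The main obstacle is precisely this faithfulness argument: pinning down the factorisation of $g$ through $\add T[1]$ requires a careful octahedral-style diagram chase and genuinely relies on the cluster-tilting (and not merely rigidity) hypothesis on $T$, together with the 2-Calabi-Yau property which ensures the symmetric vanishing needed to close the argument. Once this is in hand, the remaining verifications are formal.
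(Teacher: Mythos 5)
The paper does not prove this theorem; it is quoted from \cite{BMR1,KR:clustertilted} without argument, so there is no in-paper proof to compare against. Your outline does follow the standard Buan--Marsh--Reiten / Keller--Reiten route (approximation triangles, density via projective presentations, fullness by lifting, faithfulness via factorisation through $\add T[1]$), and the well-definedness and density steps are essentially right. Two remarks on the rest.

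First, your faithfulness step is more elaborate than it needs to be and, as sketched, does not quite pin down the factorisation. The clean argument uses only the approximation triangle of $X$. Write it as
$$T_1^X \longrightarrow T_0^X \xrightarrow{\ \pi\ } X \xrightarrow{\ h\ } T_1^X[1].$$
If $\Hom_{\CC}(T,g)=0$ then $\Hom_{\CC}(T,g\pi)=0$, and since for any $T_0 \in \add T$ and any $Z \in \CC$ the natural map $\Hom_{\CC}(T_0,Z) \to \Hom_{B_T}(\Hom_{\CC}(T,T_0),\Hom_{\CC}(T,Z))$ is bijective (additivity reduces to the tautological case $T_0=T$), this forces $g\pi=0$. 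By the rotation axiom $g$ therefore factors through $h$, hence through $\add T[1]$, with no octahedral chase and no appeal to the approximation triangle of $Y$. That helper bijection is also exactly what makes your fullness step rigorous, so it is worth isolating it as a lemma rather than invoking only the restricted equivalence $\add T \xrightarrow{\sim} \mathrm{proj}\,B_T$.

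Second, the closing claim that the 2-Calabi--Yau property is ``needed to close the argument'' is not accurate. The equivalence $\CC/(\add T[1]) \simeq \modd B_T$ holds for any cluster-tilting object in a Hom-finite Krull--Schmidt triangulated category: the only hypotheses actually used are Hom-finiteness (giving the right $\add T$-approximation) and the cluster-tilting property of $T$ (which turns $\Ext^1_{\CC}(T,Y)=0$ into $Y \in \add T$, producing the two-term $\add T$-presentation). The 2-CY condition matters elsewhere in the paper --- for the cluster structure, mutations, and the link to cluster characters --- but it is not what makes this particular equivalence go through. Also a small slip: in your density step the object you want in $\add T$ is the third vertex $Y$ of the triangle $Y \to T_0^X \to X \to Y[1]$, not a ``rotated cone in $\add T[1]$''.
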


	The equivalence $\Hom_{\CC}(T,-)$ induces an equivalence between $\mathcal U/(\add T[1])$ and the subcategory of mod-$B_T$ consisting of $B_T$-modules supported on the unfrozen part of $(\mathcal Q,F)$, which is denoted by $Q$. Slightly abusing notations, an object $M$ in $\mathcal U/(\add T[1])$ is identified with its image $\Hom_{\CC}(T,M)$ in mod-$B_T$. Conversely, any $B_T$-module supported on $Q$ is viewed as an object in $\mathcal U/(\add T[1])$.

	For any $i \in Q_0$, we denote by $S_i$ the simple $B_T$-module corresponding to the point $i$. We denote by $\<-,-\>$ the \emph{truncated Euler form} on mod-$B_T$ defined by
	$$\<M,N\>=\dim \Hom_{B_T}(M,N) - \dim \Ext^1_{B_T}(M,N)$$
	for any $B_T$-modules $M$ and $N$. We denote by $\<-,-\>_a$ the \emph{anti-symmetrised Euler form} on mod-$B_T$ defined by
	$$\<M,N\>_a=\<M,N\>-\<N,M\>$$
	for any $B_T$-modules $M$ and $N$.
	
	\begin{lem}[\cite{Palu}]\label{lem:SiK0}
		For any $i \in \mathcal Q_0$, the form $M \mapsto \<S_i,M\>$ on mod-$B_T$ only depends on the class $[M]$ of $M$ in the Grothendieck group $K_0(\modd B_T)$ of mod-$B_T$.
	\end{lem}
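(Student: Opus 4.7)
The plan is to show that $M \mapsto \<S_i, M\>$ is additive on short exact sequences, since such additivity is equivalent to factorisation through $K_0(\modd B_T)$. Fix therefore a short exact sequence
$$0 \to L \to M \to N \to 0$$
in $\modd B_T$ and apply the cohomological functor $\Hom_{B_T}(S_i, -)$. Truncating the resulting long exact sequence at the connecting map $\partial \colon \Ext^1_{B_T}(S_i, N) \to \Ext^2_{B_T}(S_i, L)$ produces an exact sequence
$$0 \to \Hom(S_i,L) \to \Hom(S_i,M) \to \Hom(S_i,N) \to \Ext^1(S_i,L) \to \Ext^1(S_i,M) \to \Ext^1(S_i,N) \to \mathrm{im}(\partial) \to 0,$$
whose alternating sum of dimensions reads
$$\<S_i, L\> - \<S_i, M\> + \<S_i, N\> + \dim \mathrm{im}(\partial) = 0.$$
The desired additivity therefore reduces to the vanishing $\mathrm{im}(\partial) = 0$ for every such short exact sequence.

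To obtain this vanishing, I would pass to the ambient 2-Calabi-Yau category. Via the Keller-Reiten equivalence $\CC/(\add T[1]) \xrightarrow{\sim} \modd B_T$ recalled above, lift $0 \to L \to M \to N \to 0$ to a triangle $\widetilde L \to \widetilde M \to \widetilde N \to \widetilde L[1]$ in $\CC$, and realise $S_i$ as $\Hom_{\CC}(T, \Sigma_i)$, where $\Sigma_i \in \CC$ is the cone of a minimal right $\add(T/T_i)$-approximation of $T_i$. Using the 2-Calabi-Yau duality $\Hom_{\CC}(X, Y[2]) \simeq D\Hom_{\CC}(Y, X)$, one rewrites $\Ext^2_{B_T}(S_i, L)$ in terms of Hom-spaces in $\CC$ and identifies the image of $\partial$, up to this duality, with a subspace of morphisms factoring through the ideal $(\add T[1])$; by rigidity of $T$ this subspace is zero.

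The main obstacle is the careful translation of the second Ext $\Ext^2_{B_T}$ into data internal to $\CC$: unlike $\Hom_{B_T}$ and $\Ext^1_{B_T}$, which are directly supplied by the Keller-Reiten equivalence, the computation of $\Ext^2_{B_T}$ requires either a projective presentation of $S_i$ lifted to $\CC$ or an appeal to Iyama-Yoshino reduction, together with bookkeeping of the corrections produced by morphisms factoring through $\add T[1]$. Once this translation is in place, rigidity of the tilting object $T$ forces $\dim \mathrm{im}(\partial) = 0$, establishing additivity on short exact sequences and hence descent to the Grothendieck group.
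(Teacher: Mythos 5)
Your strategy hinges on showing $\mathrm{im}(\partial)=0$ for the connecting map $\partial\colon \Ext^1_{B_T}(S_i,N)\to\Ext^2_{B_T}(S_i,L)$, and that is exactly where the argument must fail, because the form $\<S_i,-\>$ simply is not additive on short exact sequences for a general $2$-Calabi-Yau tilted algebra. Consider the cyclic cluster-tilted algebra of type $\A_3$ appearing in Example~\ref{exmp:normalisation}, with quiver $1\to 2\to 3\to 1$ and $\mathrm{rad}^2=0$; then $P_2\cong I_3$ is projective-injective with top $S_2$ and socle $S_3$, and for the short exact sequence $0\to S_3\to P_2\to S_2\to 0$ one finds $\<S_1,P_2\>=0$ (both $\Hom(S_1,P_2)$ and $\Ext^1(S_1,P_2)$ vanish, the latter by injectivity of $P_2$), while $\<S_1,S_3\>+\<S_1,S_2\>=0+(-1)=-1$. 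Here $\partial\colon\Ext^1(S_1,S_2)\to\Ext^2(S_1,S_3)$ is in fact an isomorphism between one-dimensional spaces, so no translation into $\CC$ and no appeal to rigidity of $T$ can make it vanish.

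The correct statement, which is what \cite{Palu} actually proves (his Lemma~1.3) and which is what the present paper uses both in the exponent $\<S_i,\mathbf e\>_a$ of the cluster character and in the proof of Proposition~\ref{prop:piX}, is that the \emph{antisymmetrised} form $\<S_i,-\>_a=\<S_i,-\>-\<-,S_i\>$ descends to $K_0(\modd B_T)$. Palu establishes, for every short exact sequence, the identity
\[
\<S_i,L\>-\<S_i,M\>+\<S_i,N\>=\<N,S_i\>-\<M,S_i\>+\<L,S_i\>,
\]
so that the two defects of additivity---the images of the connecting maps for $\Hom(S_i,-)$ and for $\Hom(-,S_i)$---have equal dimension and cancel after antisymmetrisation. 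It is in proving this equality of defects, not any vanishing, that the $2$-Calabi-Yau duality and the fact that $B_T$ is Gorenstein of Gorenstein dimension at most one are used. Your long-exact-sequence bookkeeping and the reduction to additivity on short exact sequences are correct as far as they go, but the plan to finish by forcing $\mathrm{im}(\partial)=0$ cannot be completed.
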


	For any $B_T$-module $M$ and any $\mathbf e \in K_0(\modd B_T)$, we let $\Gr_{\mathbf e}(M)$ denote the set of submodules $N$ of $M$ whose class $[N]$ in $K_0(\modd B_T)$ equals $\mathbf e$. This set is called the \emph{Grassmannian of submodules of $M$ of dimension $\mathbf e$}. It is a projective variety and we denote by $\chi(\Gr_{\mathbf e}(M))$ its Euler-Poincar\'e characteristic (with respect to the singular cohomology if $\k$ is the field of complex numbers, and to the \'etale cohomology with compact support if $\k$ is arbitrary).

	\begin{defi}[\cite{Palu}]
		The \emph{cluster character} associated with $(\CC,T)$ is the unique map
		$$X^T_?: \Ob(\CC) \fl \mathcal L(\mathbf x_{\mathcal Q})$$
		such that
		\begin{enumerate}[a)]
			\item $X^T_{T_i[1]}=x_i$ for any $i \in \mathcal Q_0$~;
			\item If $M$ is indecomposable and not isomorphic to any $T_i[1]$, then
				$$X^T_M=\sum_{\mathbf e \in \N^{\mathcal Q_0}}\chi(\Gr_{\mathbf e}(\Hom_{\CC}(T,M)))\prod_{i \in \mathcal Q_0}x_i^{\<S_i,\mathbf e\>_a-\<S_i,\Hom_{\CC}(T,M)\>}~;$$
			\item For any two objects $M,N$ in $\CC$,
				$$X^T_{M \oplus N}=X^T_{M}X^T_{N}.$$
		\end{enumerate}
	\end{defi}

	We recall that an indecomposable object $X$ in $\CC$ is called \emph{reachable from $T$} if it is a direct summand of a tilting object which can be obtained from $T$ by a finite number of mutations, see \cite{BIRS}. In particular, any reachable object is rigid.

	\begin{theorem}[\cite{FK}]\label{theorem:FK}
		The map $X^T_?$ induces a surjection from the set of indecomposable objects in $\CC$ which are reachable from $T$ to the set of cluster variables in the cluster algebra $\mathcal A(\mathcal Q,F)$.
	\end{theorem}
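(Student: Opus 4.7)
The strategy is the one developed by Palu and extended by Fu–Keller: reduce the statement to (i) matching the initial data and (ii) a multiplication formula that is compatible with cluster mutation, then conclude by induction on the number of mutations.

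First I would verify the initial step. By definition $X^T_{T_i[1]} = x_i$ for every $i\in\mathcal{Q}_0$, so the initial cluster variables of $\mathcal{A}(\mathcal{Q},F)$, both the mutable ones $(x_i)_{i\notin F}$ and the coefficients $(x_i)_{i\in F}$, lie in the image. One also needs to check that the coefficients are preserved: for a frozen summand $T_j$ with $j\in F$, $T_j[1]$ is never mutated, and under Fu–Keller's construction the factor $x_j$ attached to such a frozen summand appears with the correct exponent in $X^T_M$ via the $-\langle S_j,\mathrm{Hom}_\CC(T,M)\rangle$ term in the exponent.

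The main technical step is to establish Palu's multiplication formula: whenever $\dim\mathrm{Ext}^1_\CC(M,N)=1$, with non-split triangles
\[
N\longrightarrow E\longrightarrow M\longrightarrow N[1],\qquad M\longrightarrow E'\longrightarrow N\longrightarrow M[1],
\]
one has $X^T_M\, X^T_N = X^T_E + X^T_{E'}$. I would prove this by computing the two sides using the definition, expressing the $F$-polynomial part via Euler characteristics of Grassmannians and the $g$-vector part via the truncated Euler form. The Grassmannian identity follows from a Caldero–Keller/Palu style stratification of $\Gr_{\mathbf e}(\mathrm{Hom}_\CC(T,E)\oplus\cdots)$ according to how a submodule interacts with the connecting morphism; the exponent identity uses the 2-Calabi-Yau property to exchange $\mathrm{Ext}^1$ and $\mathrm{Hom}$ and the Lemma \ref{lem:SiK0} invariance on $K_0$. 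This is the hard part: one must verify that the anti-symmetrised form $\langle S_i,-\rangle_a$ yields the correct $B$-matrix exponents, and that the two triangles give rise to precisely the two exchange monomials prescribed by mutation in $\mathcal{A}(\mathcal{Q},F)$.

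With the multiplication formula in hand, the surjectivity follows by induction on the length of a mutation sequence. The base case is the initial cluster. For the inductive step, a mutation $\mu_k$ at an unfrozen vertex $k$ of a tilting object $T'$ reachable from $T$ corresponds, by the cluster structure axioms of \cite{BIRS}, to a pair of exchange triangles for the summand $T'_k$ with $\dim\mathrm{Ext}^1=1$. Applying $X^T_?$ and using the multiplication formula yields exactly the exchange relation in $\mathcal{A}(\mathcal{Q},F)$ after $\mu_k$, so the mutated cluster variable equals $X^T_{T''_k[1]}$ for the new summand $T''_k$. Iterating this over any sequence of mutations shows that every cluster variable is of the form $X^T_M$ for an indecomposable $M$ reachable from $T$, which is the desired surjection.
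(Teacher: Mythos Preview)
The paper does not give a proof of this statement: Theorem~\ref{theorem:FK} is stated with attribution \cite{FK} and is used as a black box throughout (for instance in Corollaries~\ref{corol:LMvars} and~\ref{corol:positivitysurface}). There is therefore no proof in the paper to compare your proposal against.

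That said, your outline is essentially the Fu--Keller argument itself: match the initial cluster via $X^T_{T_i[1]}=x_i$, establish Palu's one-dimensional multiplication formula, and then induct on mutations using the cluster-structure axioms of \cite{BIRS}. This is the correct strategy. One small slip: in your inductive step you write that the new cluster variable equals $X^T_{T''_k[1]}$, but under the conventions here the new variable is $X^T_{T''_k}$, the character of the new indecomposable summand itself (the shift by $[1]$ is only used for the \emph{initial} summands to recover the $x_i$). Apart from this notational point, your sketch is a faithful summary of the Fu--Keller proof and would be acceptable as an outline of why the cited theorem holds.
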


		Note that identifying $\CC/(\add T[1])$ with $\modd B_T$ using the functor $\Hom_{\CC}(T,-)$, we associate to any indecomposable $B_T$-module $M$ the cluster character of an indecomposable lifting $\overline M$ of $M$ in $\CC/(\add T[1])$. Thus, we set
		$$X^T_M = X^T_{\overline M}=\sum_{\mathbf e \in \N^{\mathcal Q_0}}\chi(\Gr_{\mathbf e}(M))\prod_{i \in \mathcal Q_0}x_i^{\<S_i,\mathbf e\>_a-\<S_i,M\>}$$
		and this way, we may view $X^T_?$ as a map on the set of objects in mod-$B_T$.


	\section{A formula for Dynkin type $\A$ with coefficients}\label{section:typeA}
		We now start the proof of our main result, which will be stated in Theorem \ref{theorem:main}. This section is devoted to the first step of the proof in which we establish the theorem for specific modules in the particular case of blown-up quivers with unfrozen part of Dynkin type $\A$.
			The result we prove in this section is the following~:
		\begin{theorem}\label{theorem:typeA}
			Let $(\mathcal Q,F)$ be a blown-up ice quiver with unfrozen part $Q$ of Dynkin type $\A$ and let $(\mathcal Q,F,\CC,T)$ be a realisable quadruple. Let $B_T=\End_{\mathcal{C}}(T)$. Then for any indecomposable $B_T$-module $M$, the following hold~:
			\begin{enumerate}[a)]
				\item If $M$ is unfrozen, then it is a string module~;
				\item If $M$ is a submodule of the unique unfrozen sincere module, then the corresponding cluster character is given by
					$$X^T_M = L_{M}.$$
			\end{enumerate}
		\end{theorem}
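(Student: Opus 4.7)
The plan is to handle parts (a) and (b) in turn.

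For (a), I argue that the full subcategory of $\modd B_T$ formed by unfrozen modules is equivalent to the module category over the quotient algebra $B_T/\langle e_F\rangle$, which inherits the structure of a 2-Calabi-Yau tilted algebra whose ordinary quiver is $Q$. Since $Q$ is of Dynkin type $\mathbb{A}$, this quotient is a cluster-tilted algebra of type $\mathbb{A}$ in the sense of \cite{BMR1}, and such algebras are known to be gentle and of finite representation type. In a gentle algebra every indecomposable module is a string module or a band module, and no band modules occur in Dynkin type, so any unfrozen indecomposable $B_T$-module is a string module.

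For (b), I would compare both sides term-by-term. Let $c = c_1\cdots c_n$ be the string of $M$. Expanding the matrix product in the definition of $L_c$ turns it into a sum
\[
L_c \;=\; \frac{1}{x_{v_1}\cdots x_{v_{n+1}}} \sum_{\epsilon\in\{1,2\}^{n+1}} w(\epsilon),
\]
where $w(\epsilon)$ is the monomial collected from the entries selected in the flanking row/column vectors, the diagonal matrices $V_c(i)$, and the matrices $A(c_i)$. The zero entries in $A(\beta)$ and $A(\beta^{-1})$ kill precisely those $\epsilon$ whose associated subset $N_\epsilon \subset \{v_1,\ldots,v_{n+1}\}$ of chosen vertices fails to define a sub-representation of $M_c$. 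In type $\mathbb{A}$ every submodule of a string module arises from a unique surviving $\epsilon$, and the Grassmannian $\Gr_{\mathbf e}(M)$ is a finite set of points whose cardinality equals the number of surviving $\epsilon$ with $[N_\epsilon] = \mathbf e$, so $\chi(\Gr_{\mathbf e}(M))$ counts exactly these sequences.

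What remains, and where I expect the main difficulty to lie, is checking that for a surviving $\epsilon$ with $[N_\epsilon]=\mathbf e$, the normalised weight $w(\epsilon)/(x_{v_1}\cdots x_{v_{n+1}})$ equals Palu's monomial $\prod_{i \in \mathcal{Q}_0} x_i^{\<S_i,\mathbf{e}\>_a - \<S_i, M\>}$. I would compute the exponent of each $x_i$ separately, using the fact that $\<S_i,-\>_a$ decomposes as an alternating sum over arrows incident to $i$: the diagonal $V_c(i)$ contributes factors from arrows at $v_i$ that are not adjacent in the string, while the diagonal entries of the $A(c_j)$'s contribute factors from the string's own arrows. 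The assumption that $M$ is a submodule of the unique unfrozen sincere module ensures that the closure of the support of $M$ covers every arrow appearing in the formula, and the blown-up condition ensures that each frozen vertex is adjacent to exactly one arrow, so that $V_c(i)$ records all neighbours faithfully. To keep the bookkeeping manageable I would organise the verification by induction on the length $n$ of $c$: the base case $n=0$ reduces to $X^T_{S_i}=L_{e_i}$, immediate from Palu's formula for a simple module, and the inductive step uses the factorisation of $L_c$ peeling off the last block $A(c_n)V_c(n+1)$ together with an exchange-type recursion for the cluster characters deduced from Theorem \ref{theorem:FK}. The delicate point is that the sign conventions in $\<S_i,-\>_a$ and the exact arrows counted by each $V_c(i)$ must line up perfectly, including the contributions from paths to frozen vertices, which is made possible by the blown-up hypothesis.
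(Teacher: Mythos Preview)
For (a), your argument has a gap: the claim that $B_T/\langle e_F\rangle$ ``inherits the structure of a 2-Calabi-Yau tilted algebra'' is unjustified, and quotienting a 2-CY tilted algebra by idempotents does not in general produce another one. It happens to work here only because the blown-up hypothesis together with $Q$ acyclic forces $\mathcal Q$ itself to be acyclic, so $B_T\simeq\k\mathcal Q$ is hereditary by \cite{KR:acyclic} and the quotient is just $\k Q$ --- but you do not argue this. The paper uses precisely this observation, directly: an unfrozen $B_T$-module is then simply a $\k Q$-module, and indecomposable $\k Q$-modules for $Q$ of Dynkin type $\A$ are string modules. No detour through gentle algebras is needed.

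For (b) your strategy is genuinely different from the paper's. The paper never expands $L_c$ term by term. Instead it first establishes $X^\pp_M=L^\pp_M$ for \emph{principal} coefficients by showing both sides obey the same recursions: on the $L$-side these are frieze identities adapted from \cite{ARS:frises} (Lemma~\ref{lem:ARS}), on the $X$-side they are exchange relations coming from almost-split sequences, generalising \cite[Prop.~3.10]{CC} (Lemmata~\ref{lem:ADSS x_i} and~\ref{lem:ADSS M}). It then transports the equality to an arbitrary blown-up $(\mathcal Q,F)$ via Fomin--Zelevinsky's separation formula \cite[Theorem~3.7]{cluster4}, checking separately that separation sends $X^\pp_M\mapsto X^T_M$ and $L^\pp_M\mapsto L_M$.

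Your direct expansion is a legitimate alternative and would bypass the separation-formula step entirely; since $B_T$ is hereditary here, Palu's exponents $\<S_i,\mathbf e\>_a-\<S_i,M\>$ are purely combinatorial and can in principle be matched against the weights $w(\epsilon)$ arrow by arrow. But your proposed execution has a real gap: the ``exchange-type recursion for the cluster characters deduced from Theorem~\ref{theorem:FK}'' does not exist. That theorem only asserts a surjection onto cluster variables; it provides no relation between $X^T_{M_c}$ and $X^T_{M_{c'}}$ when $c'$ is $c$ with its last letter removed, and shortening a string by one arrow is not a cluster mutation. If you want an inductive argument on the $X$-side you need almost-split sequences, which move through the AR-quiver rather than along the string --- this is exactly what the paper does. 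If instead you want to carry out the direct monomial comparison, you should compute the exponent of each $x_i$ on both sides without induction, separating the contributions from arrows inside the string, arrows in $Q$ leaving the string, and arrows to $F$; this is feasible but is the computation you have deferred rather than done.
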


		Every $B_T$-module $M$ supported on $Q$ has the structure of a module over the path algebra $H=\k Q$ of the quiver $Q$. Since $Q$ is of Dynkin type $\A$ every such module $M$ is a string module if it is indecomposable. This proves the first point of the theorem.

		To prove the second point, we need to collect the necessary background concerning the matrix product formula from \cite{ARS:frises}, which we do in Section \ref{ssection:ARS}. Section \ref{ssection:typeAstep1} is devoted to the proof when $(\mathcal Q,F)$ is the principal extension of a Dynkin quiver of type $\A$. In Section \ref{ssection:typeAstep2}, we use the Fomin-Zelevinsky separation formula in order to deduce the general case.		

		\subsection{Background on the matrix product formula}\label{ssection:ARS}
			Let $Q$ be a Dynkin quiver of type $\A_n$ with $n \geq 1$. Attach to each point $i$ of $Q_0$ a cluster variable $x_i$. Because $Q$ is of Dynkin type $\A_n$, it is well-known that the corresponding coefficient-free cluster algebra $\mathcal A(Q,\emptyset)$ is generated by $\frac{n(n+1)}{2}+n$ cluster variables. If $\CC_Q$ denotes the cluster category of $Q$, the set of indecomposable objects in $\CC_Q$ can be identified with the disjoint union of the set of indecomposable $\k Q$-modules and $\ens{P_i[1] \ | \ i\in Q_0}$ where $P_i$ denotes the indecomposable $\k Q$-module associated with the point $i \in Q_0$. Identifying the path algebra $\k Q$ with a tilting object in $\CC_Q$, the quadruple $(Q,\emptyset,\CC_Q,\k Q)$ is realisable and the associated cluster character $M \mapsto X_M = X^{\k Q}_M$ induces a bijection from the set of indecomposable objects in $\mathcal{C}_{Q}$ to the set of cluster variables in $\mathcal A(Q,\emptyset)$. Moreover, this bijection sends each object of the form $P_i[1]$ onto $x_i$, and each indecomposable $\k Q$-module $M$ onto the unique cluster variable having, in its reduced form, $\mathbf x^{\ddim M}$ as denominator, where $\ddim M = (\mathrm{dim}_{\k} M(i)) \in \N^{Q_0}$, see \cite{CK2}.

			Under this identification, one can position the cluster variables of $\mathcal A(Q,\emptyset)$ into a grid underlying the Auslander-Reiten quiver of $\mathcal{C}_Q$. This positioning of variables on the grid corresponds to the frieze on the repetition quiver $\Z Q^{\op}$ associated with the opposite quiver of $Q$, as constructed in \cite{ARS:frises}. We illustrate this on an example.

			Let $Q$ be a Dynkin quiver of type $\mathbb{A}_n$, say
			\[
			\xymatrix@!C=5pt{1 \ar[r]^-{\alpha} & 2 \ar[r]^-{\beta} & 3 & \ar[l]_-{\gamma} 4 \ar[r]^-{\delta} & 5 & \ar[l]_-{\epsilon} 6 & \ar[l]_-{\zeta} 7  & \ar[l]_-{\eta} 8 \ar[r]^-{\theta} & 9  \ar[r]^-{\iota} & 10  & \ar[l]_-{\kappa} 11.
			}
			\]
			(here $n=11$).  The corresponding grid, in which we illustrated the cluster variables associated with the indecomposable projective $\k Q$-module $P_i$, and those associated with the indecomposable injective $\k Q$-module $I_i$, is as follows (the variable $X_{M(u,v)}$ will be explained thereafter).
			$$
			\begin{array}{cccccccccccccc}
			&&&&x_{10}&x_{11} \\
			&&&&x_{9} &X_{P_{10}}& X_{P_{11}} \\
			&x_5 & x_6 & x_7 & x_8 &X_{P_9}&*&\ddots\\
			x_3 & x_4 &X_{P_5}&X_{P_6}&X_{P_7}&X_{P_8}&*&&\ddots\\
			x_2 &X_{P_3}&X_{P_4}&X_{M_{(u,v)}}&&&*&&&\ddots\\
			x_1 &X_{P_2}&&&&&*&&&&\ddots\\
				&X_{P_1}&*&*&*&*&*&*&*&X_{I_3}&X_{I_2}&X_{I_1} \\
			&&\ddots&&&&*&&X_{I_5}&X_{I_4}&x_3&x_2&x_1\\
			&&&\ddots&&&*&&X_{I_6}&x_5&x_4\\
			&&&&\ddots&&*&&X_{I_7}&x_6 \\
			&&&&&\ddots&X_{I_{10}}&X_{I_9}&X_{I_8}&x_7 \\
			&&&&&&X_{I_{11}}&x_{10}&x_{9}&x_{8}\\
			&&&&&&&x_{11}\\
			\end{array}
			$$

			In \cite[\S 8.2]{ARS:frises}, the authors defined, for each cell $(u,v)$ in the grid, a Laurent polynomial $t_{Q^{\op}}(u,v)$, whose definition depends on the region of the grid in which the cell is located.

			For the purpose of our paper, it suffices to consider only the north-west component of the grid together with its south-east frontier of asterisks. It is important to observe that in mod-$\k Q$, this part of the grid contains exactly all the indecomposable submodules of the unique indecomposable sincere $\k Q$-module, which is exactly positioned at the intersection of the vertical and the horizontal lines of asterisks (see for instance \cite{Gabriel:AR}). This observation will be crucial in Lemma \ref{lem:ARS} below and in Section \ref{section:main}.

			We give more details on this region. For any cell $(u,v)$ located in the north-west component of the grid (and not on its south-east frontier of asterisks), its horizontal and vertical projections onto the initial variables $x_i$ determine a word $x_{k}x_{k+1}\dots x_{k+l+1}$.  Following \cite{ARS:frises}, for each $j=1, 2, \dots, l-1$, let
			\[
			M(x_{j}, x_{{j+1}})=
			\left\{
			\begin{array}{ll}
			\left[\begin{array}{cc} x_{j} & 1\\ 0 & x_{{j+1}}\end{array}\right] & \text{if $x_{j}$ is to the left of $x_{{j+1}}$}, \\ [3mm]
			\left[\begin{array}{cc} x_{{j+1}} & 0\\ 1 & x_{{j}}\end{array}\right] & \text{if $x_{j}$ is below $x_{{j+1}}$}.
			\end{array}\right.
			\]
			Then
			\[
			t_{Q^{\op}}(u,v)=\frac{1}{x_{k+1}\cdots x_{k+l}}[1,x_{k}]\left(\displaystyle\prod_{j=1}^{l-1}M(x_{k+j}, x_{k+{j+1}})\right)\left[\begin{array}{c}1 \\ x_{k+{l+1}}\end{array}\right].
			\]
			To determine the corresponding indecomposable $\k Q$-module $M_{(u,v)}$ for which $t_{Q^{\op}}(u,v)=X_{M(u,v)}$, the horizontal and vertical projections onto the cluster variables $X_{P_i}$ gives the word  $X_{P_{k+1}}X_{P_{k+2}}\dots X_{P_{k+l}}$, meaning that $M_{(u,v)}$ is the string module corresponding to the unique string from the point $k+1$ to the point $k+l$ in $Q$.
			In our example, the cell $(u,v)$ gives rise to the word $x_2x_3\dots x_7$, leading to
			\[
			t_{Q^{\op}}(u,v)=\frac{1}{x_{3}x_{4}x_{5}x_{6}}
			[1,x_{2}]
			\left[\begin{array}{cc} x_3 & 1 \\ 0 & x_4\end{array}\right]
			\left[\begin{array}{cc} x_5 & 0 \\ 1 & x_4\end{array}\right]
			\left[\begin{array}{cc} x_5 & 1 \\ 0 & x_6\end{array}\right]
			\left[\begin{array}{c} 1 \\ x_7\end{array}\right]
			\]
			and $M_{(u,v)}$ corresponds to the string module whose corresponding string is $\gamma^{-1}\delta\epsilon^{-1}$ in $Q$.

			Now, a close inspection of the formulae (in the coefficient-free situation) gives $t_{Q^{\op}}(u,v)=L_{M_{(u,v)}}$ whenever $(u,v)$ lies in the north-west region.
			
			As mentioned above, we also need to consider the south-east frontier of asterisks of the north-west region. Observe that on this frontier, $L_{M_{(u,v)}}$ is defined, while $t_{Q^{\op}}(u,v)$ is not. To fix this, one can augment ${Q^{\op}}$ with two new sinks, labeled $0$ and $n+1$, in order to obtain a Dynkin quiver $\overline{{Q^{\op}}}$ of type $\A_{n+2}$ in such a way the cells which were on the frontier of asterisks now lie in the north-west region in the grid corresponding to $\overline{{Q^{\op}}}$; thus $t_{\overline{{Q^{\op}}}}(u,v)$ can be defined. In our example, it suffices to let $\overline{{Q^{\op}}}$ be given by
			\[
			\xymatrix@!C=5pt{0&\ar[l]1&\ar[l]_-{\alpha} 2 & \ar[l]_-{\beta} 3 \ar[r]^-{\gamma} & 4 & \ar[l]_-{\delta} 5 \ar[r]^-{\epsilon} &6 \ar[r]^-{\zeta} & 7  \ar[r]^-{\eta} &  8 & \ar[l]_-{\theta} 9  & \ar[l]_-{\iota} 10  \ar[r]^-{\kappa} & 11\ar[r]&12.
			}
			\]
			It is then easily checked that $L_{M_{(u,v)}}=t_{\overline{{Q^{\op}}}}(u,v)|_{\substack{x_0=1\\x_{n+1}=1}}$, that is, $L_{M_{(u,v)}}$ is obtained from $t_{\overline{{Q^{\op}}}}(u,v)$ by specialising the initial cluster variables $x_0$ and $x_{n+1}$ to $1$. Observe that this relation also holds true for any cell located in the north-west region.  So, in general, one can write
			\[
			L_{M_{(u,v)}}=t_{\overline{{Q^{\op}}}}(u,v)|_{\substack{x_0=1\\x_{n+1}=1}}
			\]
			whenever $(u,v)$ lies in the north-west region or on its south-east frontier of asterisks.
					
					
			In the situation where we deal with arbitrary coefficients, let $(\mathcal{Q}, F)$ be an ice quiver with unfrozen part $Q$ of Dynkin type $\mathbb{A}$. Then, generalising the coefficient-free situation, we let
			\[
			t_{{Q^{\op}}}(u,v)=L_{M_{(u,v)}}.
			\]
			For each $i\in Q_0$, let
			\[
			y_i=\displaystyle\prod_{\alpha \in \mathcal Q_1(F,i)}x_{s(\alpha)} \quad \text{and} \quad z_i=\displaystyle\prod_{\alpha \in \mathcal Q_1(i,F)}x_{t(\alpha)}.
			\]
			and for any $\mathbf d =(d_i)_{i \in Q_0} \in \N^{Q_0}$, we set $\y^{\mathbf d} = \prod_{i \in Q_0} y_i^{d_i}$ and $\z^{\mathbf d} = \prod_{i \in Q_0} z_i^{d_i}$. 

			Then, a tedious and combinatorial adaptation of Lemmata 5, 6 and Theorem 4 in \cite{ARS:frises}, in which one needs to embed ${Q^{\op}}$ in $\overline{{Q^{\op}}}$ as above, allows us to obtain the following recurrence relations, whose verification is left to the reader.
			\begin{lem}\label{lem:ARS}
				\begin{enumerate}[a)]
					\item For any $i\in Q_0$, we have
					\begin{equation}\label{eq:L_xi}
					x_i L_{P_i} - y_i \left(\prod_{\alpha \in Q_1(i,-)} L_{P_{t(\alpha)}}\right) \left( \prod_{\alpha \in Q_1(-,i)} x_{s(\alpha)} \right) = \mathbf z^{\ddim P_i}.
					\end{equation}
					\item For any non-projective indecomposable submodule $M$ of the unique indecomposable unfrozen sincere $\k Q$-module, we have
					\begin{equation}
					L_{\tau M} L_M - L_E = \mathbf y^{\ddim \tau M} \mathbf z^{\ddim M}
					\end{equation}
					where $E$ is the middle term of the almost split exact sequence $0 \fl \tau M \xrightarrow{i} E \xrightarrow{p} M \fl 0$ in \emph{mod-}$\k Q$.
				\end{enumerate}
			\end{lem}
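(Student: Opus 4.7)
The plan is to adapt the matrix-manipulation arguments of Lemmata~5, 6 and Theorem~4 of \cite{ARS:frises} from the coefficient-free situation to the blown-up ice-quiver setting, carefully tracking the contributions of frozen arrows to the diagonal matrices $V_c(i)$ defined in Section~\ref{ssection:formula}.

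For part (a), I would proceed by induction on the length of the string $\s(P_i)$ in $Q$. The base case is $P_i = S_i$: here $L_{P_i} = L_{e_i}$ is evaluated directly from the definition of $V_{e_i}(1)$, splitting each diagonal entry into its contributions from $Q$-arrows and from frozen arrows at $i$; the latter produce exactly the $y_i$ and $z_i$ terms, and the identity is checked by a one-line computation. For the inductive step, the string of $P_i$ begins along the arrow of $Q_1(i,-)$ lying on it; peeling off the corresponding initial matrix factor from the product $L_{P_i}$, the residual product is recognised as $L_{P_{t(\alpha_0)}}$ up to corrections coming from the boundary matrices $V_c(1)$ and $V_c(2)$. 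The remaining arrows of $Q_1(i,-)$ (whose targets are necessarily at the boundary of the string by the type $\A$ hypothesis) contribute the other $L_{P_{t(\alpha)}}$ factors, the arrows of $Q_1(-,i)$ contribute $\prod x_{s(\alpha)}$, and the frozen arrows at vertices of $\supp(P_i)$ collect into $y_i \cdot \mathbf z^{\ddim P_i}$ on the right-hand side.

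For part (b), I would use induction on the length of $\s(M)$, following the structure of \cite[Theorem~4]{ARS:frises}. In type $\A$, the middle term of the almost split sequence decomposes as $E = E_1 \oplus E_2$, where one summand may vanish at the boundary of the AR-quiver; the strings $\s(E_1), \s(E_2)$ are explicit combinatorial truncations of $\s(\tau M)$ and $\s(M)$, so that all four matrix products $L_{\tau M}, L_M, L_{E_1}, L_{E_2}$ can be expressed in terms of a common long matrix product differing only by their boundary row and column vectors. The identity $L_{\tau M} L_M - L_{E_1} L_{E_2}$ then reduces to a $2\times 2$ determinant identity in this common product: the determinant of each $A(\beta)$ or $A(\beta^{-1})$ equals $x_{s(\beta)} x_{t(\beta)}$, and the determinants of the $V_c(i)$ matrices collect the frozen contributions; these telescope to $\mathbf y^{\ddim \tau M} \mathbf z^{\ddim M}$ after cancelling the denominators $\prod x_{v_j}$.

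The main obstacle will be the combinatorial bookkeeping in part (b): verifying that the telescoped product of frozen contributions from the diagonal entries of the various $V_c(i)$ collapses to precisely $\mathbf y^{\ddim \tau M} \mathbf z^{\ddim M}$ rather than some other monomial. This relies crucially on the blown-up hypothesis, which ensures that each frozen point contributes to at most one diagonal entry of one $V_c(i)$; its contribution can therefore be unambiguously assigned to a single vertex of $\supp(\tau M) \cup \supp(M)$ according to whether the unique incident arrow is outgoing (yielding a $y$-factor) or incoming (yielding a $z$-factor) to that vertex.
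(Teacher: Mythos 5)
The paper itself gives no proof of this lemma: it simply states that the identities follow from a ``tedious and combinatorial adaptation of Lemmata~5, 6 and Theorem~4 in \cite{ARS:frises}, in which one needs to embed $Q^{\op}$ in $\overline{Q^{\op}}$,'' and leaves the verification to the reader. Your sketch follows exactly this path --- the induction on the string and the $2\times 2$ diamond-rule determinant identity for part (b) is precisely what is done in \cite{ARS:frises} --- so you are reconstructing the proof the authors have in mind rather than inventing a different one; your account is just more explicit about the bookkeeping.

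Two small corrections to your bookkeeping. First, in part (a), when $i$ is a source of $Q$ (two arrows leaving $i$), the string $\s(P_i)$ has $i$ in the \emph{interior} of the walk, not at an endpoint, so the string does not ``begin along the arrow of $Q_1(i,-)$''; your peeling must be done from both ends of the string simultaneously, which is exactly the situation where both factors $L_{P_{t(\alpha)}}$ are nontrivial. The paper's device of passing to $\overline{Q^{\op}}$ is a way of sidestepping this boundary issue, and your sketch would be tighter if you incorporated it. Second, your final paragraph overstates the role of the blown-up hypothesis: the lemma is stated for an arbitrary ice quiver $(\mathcal Q,F)$ with unfrozen part of type $\A$. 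Even if a frozen point $j$ has several incident arrows, its contribution to $V_c(i)$ is tallied \emph{per arrow}, and the quantities $y_i = \prod_{\alpha\in\mathcal Q_1(F,i)}x_{s(\alpha)}$ and $z_i = \prod_{\alpha\in\mathcal Q_1(i,F)}x_{t(\alpha)}$ already aggregate all frozen contributions at the vertex $i$, so the determinant of $V_c(i)$ is $y_{v_i}z_{v_i}$ times an unfrozen part regardless of blow-up; no ambiguity arises. The blow-up hypothesis is needed later (for the realisability and the separation-formula step in Section~\ref{ssection:typeAstep2}), not for this purely combinatorial identity.
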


	\subsection{Proof of Theorem \ref{theorem:typeA} - first step : Principal coefficients}\label{ssection:typeAstep1}
		We start with two lemmata concerning cluster characters associated with principal extensions of acyclic quivers. These are analogues to \cite[Lemma 3.9 and Proposition 3.10]{CC} (see also \cite[Lemma 2.2]{Dupont:qChebyshev}) which provide representation-theoretical interpretations of certain exchange relations in a cluster algebra with principal coefficients at an acyclic seed.

		Let $Q$ be an acyclic quiver and let $H=\k Q$ be the path algebra of $Q$, which is finite-dimensional and hereditary. The map sending an $H$-module module to its dimension vector allows us to identify the Grothendieck group $K_0(\modd H)$ with $\Z^{Q_0}$. If $(\mathcal Q,F)$ is the principal extension $(Q^\pp, Q_0')$ of $Q$ defined in Example \ref{exmp:Qpp}, we denote by $X^\pp_?$ the corresponding cluster character and by $L^\pp$ the matrix formula of equation \eqref{eq:Lc}. For every $i' \in Q_0'$, we set $y_i = x_{i'}$. It is known that
		$$X^\pp_M = \sum_{\mathbf e \in \N^{Q_0}} \chi(\Gr_{\mathbf e}(M)) \prod_{i \in Q_0} x_i^{-\<\mathbf e,[S_i]\>-\<[S_i],[M]-\mathbf e\>} y_i^{m_i - e_i}$$
		where the Euler forms and the Grassmannian are considered in mod-$H$ and where $[M]=(m_i)_{i \in Q_0}$ (see for instance \cite[Remark 2.4]{Dupont:qChebyshev}).
		\begin{lem}\label{lem:ADSS x_i}
			Let $Q$ be an acyclic quiver. Then for any $i \in Q_0$, we have
			\begin{equation}\label{eq:proj}
				x_i X^\pp_{P_i} - y_i \left(\prod_{\alpha \in Q_1(i,-)} X^\pp_{P_{t(\alpha)}}\right) \left( \prod_{\alpha \in Q_1(-,i)} x_{s(\alpha)} \right) = 1.
			\end{equation}
			where $P_j$ is the indecomposable projective $\k Q$-module associated with the point $j\in Q_0$.
		\end{lem}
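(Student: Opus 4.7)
The plan is to compute $X^\pp_{P_i}$ directly from the explicit Grassmannian formula recalled just before the statement of the lemma, split the sum according to whether the submodule is all of $P_i$ or proper, and identify the second piece with $X^\pp_{\rad P_i}$ up to an easily-tracked correction factor. The structural input is that, since $Q$ is acyclic and $H=\k Q$ is hereditary, $P_i$ has simple top $S_i$ and
$$\rad P_i = \bigoplus_{\alpha \in Q_1(i,-)} P_{t(\alpha)},$$
so any proper submodule $N\subsetneq P_i$ satisfies $N(i)=0$ and $N\subseteq \rad P_i$. In particular $\Gr_{\mathbf{e}}(P_i)=\Gr_{\mathbf{e}}(\rad P_i)$ for every class $\mathbf{e}$ with $e_i=0$.

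I would then compute the two contributions separately. For $\mathbf{e}=[P_i]$ the Grassmannian is a point, the $y_j$-exponents all vanish, and the $x_j$-exponent reduces to $-\<[P_i],[S_j]\>=-\delta_{ij}$ (using that $P_i$ is projective, so $\<[P_i],[S_j]\>=\dim\Hom(P_i,S_j)=\delta_{ij}$). This contributes $x_i^{-1}$. For the remaining sum, indexed by $\mathbf{e}$ with $e_i=0$, each term looks just like the corresponding term of $X^\pp_{\rad P_i}$ except for a global correction coming from the differences
$$[P_i]-[\rad P_i]=[S_i],\qquad (P_i)_j-(\rad P_i)_j=\delta_{ij}.$$
The difference of $x_j$-exponents is $-\<[S_j],[S_i]\>=-\delta_{ij}+|Q_1(j,i)|$ and the difference of $y_j$-exponents is $\delta_{ij}$, so the sum over proper submodules equals
$$x_i^{-1}\, y_i \prod_{\alpha\in Q_1(-,i)}x_{s(\alpha)}\; X^\pp_{\rad P_i}.$$

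Finally I would apply the multiplicativity axiom (c) of the cluster character to write $X^\pp_{\rad P_i}=\prod_{\alpha\in Q_1(i,-)}X^\pp_{P_{t(\alpha)}}$, combine the two contributions, multiply by $x_i$, and rearrange to obtain the identity. No single step is hard: the only delicate point is the exponent bookkeeping, and the main structural obstacle — namely that one has to pass from sums over $\Gr_{\mathbf{e}}(P_i)$ to sums involving $\Gr_{\mathbf{e}}(\rad P_i)$ — is trivialized by the hereditariness of $H$, which forces every proper submodule of $P_i$ into $\rad P_i$. I expect the main care to be required in checking the sign conventions of $\<-,-\>$ and the identity $\<[S_j],[S_i]\>=\delta_{ij}-|Q_1(j,i)|$, so as to be sure the correction factor yields exactly $y_i\prod_{\alpha\in Q_1(-,i)}x_{s(\alpha)}$ on both sides.
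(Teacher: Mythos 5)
Your proof is correct and follows essentially the same route as the paper's (which is itself an adaptation of Caldero–Chapoton's Lemma 3.9): decompose submodules of $P_i$ into the full module, contributing $x_i^{-1}$, and proper submodules lying in $\rad P_i$, then track the exponent shift coming from $[P_i]-[\rad P_i]=[S_i]$ and finish with multiplicativity applied to $\rad P_i=\bigoplus_{\alpha\in Q_1(i,-)}P_{t(\alpha)}$. The only cosmetic difference is that the paper manipulates $X^\pp_{\rad P_i}$ and compares to $X^\pp_{P_i}$, whereas you start from $X^\pp_{P_i}$ and extract $X^\pp_{\rad P_i}$; the exponent bookkeeping and the verification of $\<[S_j],[S_i]\>=\delta_{ij}-|Q_1(j,i)|$ are exactly as you describe.
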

		\begin{proof}
			The proof is a straightforward adaptation of the proof of \cite[Lemma 3.9]{CC}, we give it for completeness. We recall that for any $i \in Q_0$, we have
			$$\rad P_i = \bigoplus_{\alpha \in Q_1(i,-)} P_{t(\alpha)} \textrm{ and thus }X^{\pp}_{\rad P_i} = \prod_{\alpha \in Q_1(i,-)} X^\pp_{P_{t(\alpha)}}.$$
			We also recall that $P_i / \rad P_i \simeq S_i$ and that a submodule $M$ of $P_i$ either equals $P_i$ or is a submodule of $\rad P_i$. We set $\ddim P_i = \mathbf m = (m_j)_{j \in Q_0}$ and let $\delta$ be such that $\delta_{ij}=1$ if $i=j$ and $\delta_{ij}=0$ otherwise. Thus
			\begin{align*}
				X^\pp_{\rad P_i}
					& = \sum_{\mathbf e} \chi(\Gr_{\mathbf e}(\rad P_i)) \prod_{l \in Q_0} x_l^{-\<\mathbf e, [S_l]\>-\<[S_l], \mathbf m - [S_i] - \mathbf e\>} y_l^{m_l - \delta_{il} - e_l} \\
				& = \sum_{\mathbf e} \chi(\Gr_{\mathbf e}(\rad P_i)) \prod_{l \in Q_0} \left(x_l^{-\<\mathbf e, [S_l]\>-\<[S_l], \mathbf m - \mathbf e\>} y_l^{m_l - e_l}\right) x_l^{\<[S_l], [S_i]\>} y_l^{-\delta_{il}}\\
				& = y_i^{-1} \left( \prod_{\alpha \in Q_1(-,i)} x_{s(\alpha)}^{-1} \right) x_i \sum_{\mathbf e} \chi(\Gr_{\mathbf e}(\rad P_i)) \prod_{l \in Q_0} x_l^{-\<\mathbf e, [S_l]\>-\<[S_l], \mathbf m - \mathbf e\>} y_l^{m_l - e_l} \\
			\end{align*}
			but
			$$X^\pp_{P_i} = x_i^{-1} + \sum_{\mathbf e} \chi(\Gr_{\mathbf e}(\rad P_i)) \prod_{l \in Q_0} x_l^{-\<\mathbf e, [S_l]\>-\<[S_l], \mathbf m - \mathbf e\>} y_l^{m_l - e_l}.$$
			Thus,
			$$ x_i X^\pp_{P_i} = y_i \left( \prod_{\alpha \in Q_1(-,i)} x_{s(\alpha)} \right) X^\pp_{\rad P_i} + 1$$
			from which we deduce \eqref{eq:proj}.
		\end{proof}

		\begin{lem}\label{lem:ADSS M}
			Let $Q$ be an acyclic quiver. Then for any non-projective $\k Q$-module $M$, we have
			\begin{equation}\label{eq:nonproj}
				X^\pp_{\tau M} X^\pp_M - X^\pp_E = \mathbf y^{\ddim \tau M}
			\end{equation}
			where $E$ is the central term of the almost split exact sequence $0 \fl \tau M \xrightarrow{i} E \xrightarrow{p} M \fl 0$.
		\end{lem}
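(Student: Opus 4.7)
My plan is to adapt the argument of \cite[Proposition 3.10]{CC} to the principal coefficient setting, keeping careful track of the $y_i$-exponents. I will expand both $X^\pp_{\tau M} X^\pp_M$ and $X^\pp_E$ as Euler-characteristic sums over Grassmannians, compare them via a stratification induced by the almost split sequence, and isolate a single non-liftable pair that will account for the right-hand side $\mathbf y^{\ddim \tau M}$.

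Writing $\pi: E \twoheadrightarrow M$ for the projection, the assignment $L \mapsto (L \cap \tau M,\, \pi(L))$ stratifies $\Gr(E)$ over $\Gr(\tau M) \times \Gr(M)$. A pair $(L_1, L_2)$ is \emph{liftable} (lies in the image) if and only if the short exact sequence
$$0 \to \tau M/L_1 \to \pi^{-1}(L_2)/L_1 \to L_2 \to 0$$
splits, in which case the fibre is an affine torsor over $\Hom(L_2, \tau M/L_1)$ and contributes Euler characteristic $1$. A direct comparison of the Euler-form exponents in the definition of $X^\pp$ shows that the monomial in $\x, \y$ attached to a submodule $L$ of class $[L_1] + [L_2]$ in $X^\pp_E$ coincides with the product of the monomials attached to $(L_1, L_2)$ in $X^\pp_{\tau M} X^\pp_M$. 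Consequently, liftable pairs contribute identically on the two sides, and $X^\pp_{\tau M} X^\pp_M - X^\pp_E$ reduces to the sum of the contributions of the non-liftable pairs.

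I then invoke the Auslander-Reiten property to identify the unique non-liftable pair. If $L_2 \subsetneq M$, then the inclusion $L_2 \hookrightarrow M$ is not a split epimorphism, hence factors through $E \to M$, which forces the pullback of the AR sequence along $L_2 \hookrightarrow M$ to split. Dually, if $L_1 \neq 0$, then $\tau M \twoheadrightarrow \tau M/L_1$ is not a split monomorphism, hence factors through $\tau M \to E$, which forces the corresponding pushout to split. Since pullback and pushout along disjoint variables commute in Ext, the only non-liftable pair is $(L_1, L_2) = (0, M)$.

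A direct substitution of $(0, M)$ into the expansion of $X^\pp$ yields
$$X^\pp_{\tau M} X^\pp_M - X^\pp_E \;=\; \prod_{i \in Q_0} x_i^{-\<[S_i], [\tau M]\> - \<[M], [S_i]\>} y_i^{(\tau M)_i}.$$
The main obstacle is to show that $\<[S_i], [\tau M]\> + \<[M], [S_i]\> = 0$ for every $i \in Q_0$. Using the explicit formula $\<\mathbf a, \mathbf b\> = \sum_j a_j b_j - \sum_{\alpha \in Q_1} a_{s(\alpha)} b_{t(\alpha)}$ for the Euler form of $H = \k Q$, this reduces to the mesh-type identity
$$E_i \;=\; \sum_{\alpha: i \to j} (\tau M)_j \;+\; \sum_{\alpha: j \to i} m_j$$
at each vertex $i \in Q_0$, which follows from the explicit construction of $\tau M$ and $E$ via minimal projective resolutions and the Nakayama functor in the hereditary setting. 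The identity then collapses the right-hand side to $\mathbf y^{\ddim \tau M}$, proving the lemma.
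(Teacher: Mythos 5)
Your proof is correct and takes essentially the same route as the paper's, which explicitly credits \cite[Proposition 3.10]{CC} and only sketches the argument: expand $X^\pp_{\tau M} X^\pp_M$ and $X^\pp_E$ as Grassmannian sums, relate them by the map $L \mapsto (L \cap \tau M,\, p(L))$, and use the almost split property to identify $(0,M)$ as the unique non-liftable pair and to compute the residual term. You re-derive the fibre analysis directly from the Auslander--Reiten lifting property rather than citing \cite[Lemma 3.11]{CC} as the paper does, and you make explicit the cancellation $\<[S_i],[\tau M]\> + \<[M],[S_i]\> = 0$ that the paper silently absorbs in its last displayed line. Your mesh-style reduction of this identity is valid, but note that it is exactly the Coxeter relation $\<\mathbf a,\mathbf b\> = -\<\mathbf b,\Phi\mathbf a\>$ for the hereditary Euler form (applied with $[\tau M]=\Phi[M]$ since $M$ is non-projective), which gives the cancellation in one line and avoids the detour through minimal projective resolutions and the Nakayama functor.
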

		\begin{proof}
			This result is an analogue of \cite[Proposition 3.10]{CC}. We sketch the proof for the convenience of the reader.
		
			We write $N = \tau M$, $\ddim N = \mathbf n = (n_l)_{l \in Q_0}$ and $\ddim M = \mathbf m = (m_l)_{l \in Q_0}$. It follows from the definition of the character that we have
			$$X^\pp_{N} X^\pp_M = X^\pp_{N \oplus M} = \sum_{\mathbf e \in \N^{Q_0}} \chi(\Gr_{\mathbf e}(N \oplus M)) \prod_{l \in Q_0} x_l^{-\<\mathbf e, [S_l]\>-\<[S_l], \mathbf n +\mathbf m - \mathbf e\>} y_l^{m_l + n_l - e_l.}$$
			but there is a surjective map with affine fibres
			$$\Gr_{\mathbf e}(N \oplus M) \fl \bigsqcup_{\mathbf f + \mathbf g =\mathbf e} \Gr_{\mathbf f}(N) \times \Gr_{\mathbf g}(M)$$
			so that $\chi(\Gr_{\mathbf e}(N \oplus M)) = \sum_{\mathbf f + \mathbf g =\mathbf e} \chi(\Gr_{\mathbf f}(N))\chi(\Gr_{\mathbf g}(M))$ and thus
			$$X^\pp_{N \oplus M} = \sum_{\mathbf f, \mathbf g} \chi(\Gr_{\mathbf f}(N)) \chi(\Gr_{\mathbf g}(M)) \prod_{l \in Q_0} x_l^{-\<\mathbf f + \mathbf g, [S_l]\>-\<[S_l], \mathbf n +\mathbf m - \mathbf f - \mathbf g\>} y_l^{m_l + n_l - f_l -g_l.}$$
			Now, it follows from \cite[Lemma 3.11]{CC} that every fibre of the map
			$$\zeta : \left\{\begin{array}{rcl}
				\Gr_{\mathbf e}(N \oplus M) & \fl & \displaystyle \bigsqcup_{\mathbf f + \mathbf g = \mathbf e} \Gr_{\mathbf f}(N) \times \Gr_{\mathbf g}(M) \\
				U & \mapsto & (i^{-1}(U), p(U))
			\end{array}\right.$$
			is an affine space except over the point $(0,M)$ where it is empty. It thus follows that
			$$X^\pp_{N \oplus M} = X^\pp_E + \prod_{l \in Q_0}y_l^{n_l}$$
			which establishes \eqref{eq:nonproj}.	
		\end{proof}

		We now prove the second point of Theorem \ref{theorem:typeA} for principal coefficients~:
		\begin{prop}\label{prop:typeApp}
			Let $Q$ be a Dynkin quiver of type $\A$. Then for any indecomposable submodule $M$ of the unique unfrozen sincere $\k Q$-module, we have
			$$X^\pp_M = L^\pp_M.$$
		\end{prop}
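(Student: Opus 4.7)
The plan is to exploit the fact that in the principal-extension setting, the recurrences of Lemma \ref{lem:ARS} for $L^\pp$ and those of Lemmas \ref{lem:ADSS x_i} and \ref{lem:ADSS M} for $X^\pp$ coincide, and then deduce $L^\pp_M = X^\pp_M$ by a double induction. First I would observe that in the principal extension $(Q^\pp, Q_0')$, every arrow touching a frozen vertex has the form $i' \to i$ (frozen source, unfrozen target), so $\mathcal{Q}_1(i, F) = \emptyset$ and hence $z_i = 1$ for all $i \in Q_0$. Consequently $\mathbf{z}^{\ddim N} = 1$ for every $\k Q$-module $N$, and the two recurrences of Lemma \ref{lem:ARS} become syntactically identical to the recurrences established in Lemmas \ref{lem:ADSS x_i} and \ref{lem:ADSS M}. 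Adopting the multiplicative convention $L^\pp_{N_1 \oplus N_2} := L^\pp_{N_1} L^\pp_{N_2}$, which mirrors axiom (c) of the cluster character, the identification extends to the (possibly decomposable) middle terms of AR sequences.

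The first induction establishes $L^\pp_{P_i} = X^\pp_{P_i}$ for every $i \in Q_0$, by induction on $\dim P_i$. If $i$ is a sink then $Q_1(i,-) = \emptyset$, so $P_i = S_i$ and both recurrences collapse to the single scalar identity $x_i \cdot (\cdot) = 1 + y_i \prod_{\alpha \in Q_1(-,i)} x_{s(\alpha)}$, pinning down $L^\pp_{P_i}$ and $X^\pp_{P_i}$ to the same Laurent polynomial. If $i$ is not a sink, then for each $\alpha \in Q_1(i,-)$ the projective $P_{t(\alpha)}$ embeds properly in $P_i$, so $\dim P_{t(\alpha)} < \dim P_i$; the inductive hypothesis applies and the common recurrence forces $L^\pp_{P_i} = X^\pp_{P_i}$.

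The second induction treats a general non-projective indecomposable submodule $M$ of the sincere module, by induction on the $\tau$-depth of $M$ (the smallest $d \geq 0$ with $\tau^d M$ projective). Using the AR sequence $0 \to \tau M \to E \to M \to 0$, Lemma \ref{lem:ARS}(b) and Lemma \ref{lem:ADSS M} become
\[L^\pp_M = \frac{L^\pp_E + \mathbf{y}^{\ddim \tau M}}{L^\pp_{\tau M}}, \qquad X^\pp_M = \frac{X^\pp_E + \mathbf{y}^{\ddim \tau M}}{X^\pp_{\tau M}},\]
so it suffices to know the identity at $\tau M$ and at each indecomposable summand of $E$, then divide. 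The chief obstacle is the combinatorial closure property one needs: in type $\A$, for any such $M$, both $\tau M$ and every indecomposable summand of $E$ are themselves indecomposable submodules of the sincere module, of strictly smaller $\tau$-depth than $M$. This is precisely the stability of the ``north-west region'' of the grid described in Section \ref{ssection:ARS} under $\tau$ and under taking summands of AR middle terms, which follows from the standard description of the AR quiver of a hereditary algebra of type $\A$. Once this closure is verified, the two nested inductions close cleanly and the proposition follows.
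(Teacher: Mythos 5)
Your overall strategy coincides with the paper's (terse) proof: set $z_i = 1$ in Lemma~\ref{lem:ARS}, observe that its recurrences then match those of Lemmata~\ref{lem:ADSS x_i} and~\ref{lem:ADSS M} verbatim, and close by induction using projective modules as the base case. The closure property you isolate (that the middle term of the AR sequence ending at a submodule of the sincere module is again a sum of such submodules) is indeed the key geometric input, and it does hold by the north-west-region description in Section~\ref{ssection:ARS}.

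However, your chosen induction parameter is wrong: the indecomposable summands of $E$ need \emph{not} have strictly smaller $\tau$-depth than $M$, even inside the north-west region. Take $Q: 1 \to 2 \leftarrow 3 \to 4 \leftarrow 5 \to 6$ of type $\A_6$. Here $P_3 = M_{[2,4]}$, and the sincere module $M_0 = M_{[1,6]}$ sits in the almost split sequence $0 \to P_3 \to M_{[1,4]} \oplus M_{[2,6]} \to M_0 \to 0$. One checks $\tau M_{[1,4]} = P_2$ and $\tau M_{[2,6]} = P_4$, so $M_{[1,4]}$, $M_{[2,6]}$ and $M_0$ all have $\tau$-depth exactly $1$. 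Your induction hypothesis thus does not cover the middle terms when treating $M_0$, so the second induction does not close. The fix is easy: replace $\tau$-depth by the partial order on the (finite, directed) AR quiver of $\k Q$, or equivalently by the length of the longest path in the AR quiver ending at $M$. Both $\tau M$ and each summand of $E$ are strict predecessors of $M$ in this order (via $\tau M \to E_j \to M$), and the closure property keeps everything in the north-west region, so the induction is well-founded with projectives as minimal elements. With that substitution, your argument matches the paper's intended proof.
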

		\begin{proof}
			The proof directly follows from Lemmata \ref{lem:ARS}, \ref{lem:ADSS x_i} and \ref{lem:ADSS M}, keeping in mind that for principal coefficients we have $z_i=1$ for each $i$ in Lemma \ref{lem:ARS}.
			\end{proof}

	\subsection{Proof of Theorem \ref{theorem:typeA} - second step}\label{ssection:typeAstep2}
		We now finish the proof of Theorem \ref{theorem:typeA}. For this, the strategy is to apply the Fomin-Zelevinsky separation formula to the equality established in Proposition \ref{prop:typeApp}. We let $(\mathcal Q,F)$ be a blown-up ice quiver with unfrozen part $Q$ of Dynkin type $\A$ and we fix a realisable quadruple $(\mathcal Q,F,\CC,T)$ and an unfrozen indecomposable $B_T$-module $M$ which has a natural structure of $H$-module where $H$ is the path algebra of $Q$. We denote by $(Q^\pp,Q_0')$ the principal extension of $Q$. Every $H$-module can naturally be viewed as a $B_T$-module or as a $\k Q^{\pp}$-module. According to Proposition \ref{prop:typeApp}, we know that $L^\pp_M = X^\pp_{M}$. We now want to prove that $L_M = X^T_M$ where $L_?$ denotes the matrix formula associated with $(\mathcal Q,F)$ in equation \eqref{eq:Lc}.

		In order to simplify the notations, we identify $Q_0$ with $\ens{1, \ldots, n}$ and for every $i \in Q_0$, we denote $i'$ by $n+i$. Let $\mathbb P$ be the tropical semifield generated by the $x_i$, with $i \in F$, and endowed with the auxiliary addition
		\[
		\prod_{i}x_i^{a_i} \oplus \prod_{i}x_i^{b_i} = \prod_{i}x_i^{\text{min}\{a_i, b_i\}}.
		\]
		For any $i \in Q_0$, we set
		$$w_i = \left( \prod_{\alpha \in \mathcal Q_1(F,i)} x_{s(\alpha)} \right)\left( \prod_{\alpha \in \mathcal Q_1(i,F)} x_{t(\alpha)}^{-1} \right).$$
		With the notations of Section \ref{ssection:ARS}, we can thus write $w_i=y_iz_i^{-1}$.
		Following \cite{cluster4}, for every subtraction-free rational expression $f$ in the variables $x_1, \ldots, x_{2n}$, we define the \emph{separation of $f$} as
		$$\sigma(f) = \frac{f(x_1, \ldots, x_n, w_1, \ldots,w_n)}{f|_{\mathbb P}(1, \ldots, 1, w_1, \ldots,w_n)}$$
		where the $f|_{\P}$ means that we have replaced the ordinary addition in $\mathcal F(\x_{F})$ by the auxiliary addition $\oplus$ of the semifield $\P$. This can be done since $f$ is subtraction-free.

		Now, we note that, for every $H$-module, $L^\pp_M$ is a subtraction-free rational expression by definition. Also, for every indecomposable $H$-module $M$, $X^\pp_M$ is a cluster variable so that it is defined with a finite number of mutations, which are all subtraction-free. Thus, $X^\pp_M$ is also a subtraction-free rational expression (see \cite[\S 3]{cluster4}) and we can apply $\sigma$ to both $X^\pp_M$ and $L^\pp_M$. Since $L^\pp_M = X^\pp_{M}$ by Proposition \ref{prop:typeApp}, we have $\sigma(L^\pp_M) = \sigma(X^\pp_M)$. Thus, we only need to prove that $\sigma(X^\pp_M) = X^T_M$ and $\sigma(L^\pp_{M}) = L_M$.

		The equality $\sigma(X^\pp_M) = X^T_M$ follows directly from \cite[Theorem 3.7]{cluster4} since $X^\pp_?$ and $X^T_?$ induce bijections from the set of indecomposable $H$-modules to the set of cluster variables in the cluster algebras $\mathcal A(Q^\pp,Q_0')$ and $\mathcal A(\mathcal Q,F)$ respectively and these bijections respect denominator vectors.

		The equality $\sigma(L^\pp_M) = L_M$ is obtained from the following observation. We write $c= \s(M)$. For each $j\in F$, $x_j$ appears in exactly one $w_i$, since there is exactly one $i \in Q_0$ which is adjacent to $j$ in $\mathcal Q$. On the other hand, in the product of matrices $L_M^\pp$, $x_{n+i}$ appears in at most one matrix, namely in the matrix $V^\pp_c(i)$ (where $V^\pp_c(i)$ denotes the matrices arising in the matrix product $L_c^{\pp}$). Thus, using the definition of the addition in $\mathbb{P}$ and the fact that $L_M^\pp(1,\ldots,1,x_{n+1},\ldots,x_{2n})$ has constant term $1$,
		it follows that
		$$L_M^\pp|_{\mathbb{P}} (1,\ldots,1,w_1,\ldots,w_n) =\prod_{i \in (\overline{\supp(M)})_0} \prod_{\alpha \in Q^{\pp}_1(i,F)} x_{t(\alpha)}^{-1} .$$
		Now, since
		$$ \left( \prod_{\alpha \in \mathcal Q_1(i,F)} x_{t(\alpha)} \right) \left(V^\pp_c(i)(x_1,\ldots,x_n,w_1,\ldots,w_n)\right) =V_c(i),$$
		we get $\sigma(L_M^\pp)=L_M$. This ends the proof of Theorem \ref{theorem:typeA}. \hfill \qed
		
\section{The general case}\label{section:main}
	We now deduce the main theorem from Theorem \ref{theorem:typeA}. For this, we use some ``blow-up'' techniques which are described in Section \ref{ssection:coverings}. These techniques introduce some ``error'' in the computation of the characters but this can be controlled with a normalising factor that we introduce in Section \ref{ssection:normalisation}. The main result (Theorem \ref{theorem:main}) is stated in Section \ref{ssection:main}.

	\subsection{Blowing up quivers along string modules}\label{ssection:coverings}
		In this section, we fix a quiver $Q$, we denote by $M$ a string representation of $Q$ and we write $c=\s(M)$. If $c$ is of positive length $n \geq 1$, we write $c = c_1 \cdots c_n$.

		The \emph{blow-up $\widetilde{Q_M}$ of $Q$ along $M$} is the quiver constructed as follows. Let $\ens{v_1, \ldots, v_{n+1}}$ be a set. For any $i \in \ens{1, \ldots, n}$, we set $\beta_i$ from $v_{i}$ to $v_{i+1}$ which is an arrow (or a formal inverse of an arrow, respectively) in $(\widetilde{Q_M})_1$ if $c_i$ is an arrow (or a formal inverse of an arrow, respectively) in $Q_1$. For any $i \in \ens{1, \ldots, n}$ and for any arrow $\alpha \in Q_1$ such that $\alpha \neq c_i^{\pm 1}, c_{i-1}^{\pm 1}$, if $s(\alpha) = t(c_i)$ (or $t(\alpha) = s(c_i)$, respectively), we create a new point, denoted by $t(\alpha)^{\alpha;i}$ (or $s(\alpha)^{\alpha;i}$, respectively) and an arrow $\alpha_{v_i} : v_i \fl t(\alpha)^{\alpha;i}$ (or $\alpha_{v_i} : s(\alpha)^{\alpha;i} \fl v_i$, respectively).

		\begin{exmp}\label{exmp:blowQ}
			Consider the quiver
			$$\xymatrix{
				Q : 	& 1 \ar[r]^{\alpha} & 2 \ar@<-2pt>[r]_{\gamma} \ar@<+2pt>[r]^{\epsilon} & 3 \ar[r]^{\delta} & 4
			}$$
			and let $M$ be the string representation corresponding to the walk $c = \epsilon^{-1}\gamma$, that is
			$$\xymatrix{
				M : 	& 0 \ar[r] & \k \ar@<-2pt>[r]_{[0,1]^t} \ar@<+2pt>[r]^{[1,0]^t} & \k^2 \ar[r] & 0.
			}$$
			Then, the quiver $\widetilde{Q_M}$ is
			$$\xymatrix{
				\widetilde{Q_M} & 2^{\gamma;1} \ar[r]^{\gamma_{v_1}} & v_1 \ar[d]_{\delta_{v_1}} & \ar[l]_{\beta_1} v_2 \ar[r]^{\beta_2} & v_3 \ar[d]^{\delta_{v_3}} & \ar[l]_{\epsilon_{v_3}} 2^{\epsilon;3}. \\
						& & 4^{\delta;1}  & 1^{\alpha;2} \ar[u]_{\alpha_{v_1}} & 4^{\delta;3}
			}$$
		\end{exmp}

		We recall the following definition from \cite{Haupt:string}~:
		\begin{defi}
			Let $Q$ and $S$ be two quivers. A \emph{winding of quivers} $\Phi: Q \fl S$ is a pair $\Phi = (\Phi_0,\Phi_1)$ where $\Phi_0: Q_0 \fl S_0$ and $\Phi_1 : Q_1 \fl S_1$ are such that~:
			\begin{enumerate}[a)]
				\item $\Phi$ is a morphism of quivers, that is $s \circ \Phi_1 = \Phi_0 \circ s$ and $t \circ \Phi_1 = \Phi_0 \circ t$~;
				\item If $\alpha,\alpha' \in Q_1$ with $\alpha \neq \alpha'$ and $s(\alpha) = s(\alpha')$, then $\Phi_1(\alpha) \neq \Phi_1(\alpha')$~;
				\item If $\alpha,\alpha' \in Q_1$ with $\alpha \neq \alpha'$ and $t(\alpha) = t(\alpha')$, then $\Phi_1(\alpha) \neq \Phi_1(\alpha')$~;
			\end{enumerate}
		\end{defi}

		With the above notations, the maps
		$$\Phi_0 : \left\{\begin{array}{rcll}
			v_i & \mapsto & t(c_{i-1}) & \textrm{ for any } i \in \ens{1, \ldots, n+1}\\
			v^{\alpha;i} & \mapsto & v & \textrm{ for any }v \in \overline{\supp(M)} \textrm{ and any } \alpha,i\\\
		\end{array}\right.$$
		and
		$$\Phi_1:\left\{\begin{array}{rcll}
			\beta_i & \mapsto & c_i & \textrm{ for any } i \in \ens{1, \ldots, n} \\
			\alpha_{v_i} & \mapsto & \alpha & \textrm{ for any arrow of the form } \alpha_{v_i}
		\end{array}\right.$$
		induce a winding of quivers $\Phi: \widetilde{Q_M} \fl \overline{\supp(M)}$.

		Let $\Phi_*$ be the map from the set of objects in $\rep(\widetilde{Q_M})$ to the set of objects in $\rep(\overline{\supp(M)})$ which associates to a representation $\widetilde V$ of $\widetilde{Q_M}$ the representation $V$ given by
		$$V(i)=\bigoplus_{j \in \Phi_0^{-1}(i)} \widetilde V(j) \textrm{ and } V(\alpha)=\bigoplus_{\beta \in \Phi_1^{-1}(\alpha)} \widetilde V(\beta)$$
		for any $i \in (\overline{\supp(M)})_0$ and any $\alpha \in (\overline{\supp(M)})_1$. At the level of dimension vectors, $\Phi_*$ induces a natural map $\phi: \N^{(\widetilde{Q_M})_0} \fl \N^{(\overline{\supp(M)})_0}$ .

		We define a representation $\widetilde M$ of $\widetilde{Q_M}$ by setting for any point $v \in (\widetilde{Q_M})_0$
		$$\widetilde M(v) = \left\{\begin{array}{ll}
			\k & \textrm{ if $v=v_i$ for some } i \in \ens{0, \ldots,n}, \\
			0 & \textrm{ otherwise}
		\end{array}\right.$$
		and for any arrow $\alpha \in (\widetilde{Q_M})_1$
		$$\widetilde M(\alpha) = \left\{\begin{array}{ll}
			1_{\k} & \textrm{ if $\alpha=\beta_i^{\pm 1}$ for some } i \in \ens{1, \ldots,n}, \\
			0 & \textrm{ otherwise.}
		\end{array}\right.$$

		\begin{lem}
			Let $B$ be a finite dimensional $\k$-algebra. Then for any string $B$-module $M$, we have $\Phi_{*}(\widetilde M) \simeq M$.
		\end{lem}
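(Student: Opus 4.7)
The plan is to verify the isomorphism directly by unwinding both sides and comparing to the Butler-Ringel construction of the string module $M$ recalled in Section~1.2. Recall that this construction presents $M$ with a basis $\ens{z_i \, | \, i \in I_v}$ at each vertex $v \in (\overline{\supp(M)})_0$, where $I_v = \ens{i \in [1,n+1] \, | \, v_i = v}$, and that the action of an arrow $\alpha \in Q_1$ either sends $z_{i-1} \mapsto z_i$ (if $c_i = \alpha$), or $z_i \mapsto z_{i-1}$ (if $c_i = \alpha^{-1}$), with all other basis vectors mapped to zero.

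First I would identify the vector spaces. By definition $\Phi_*(\widetilde M)(v) = \bigoplus_{j \in \Phi_0^{-1}(v)} \widetilde M(j)$, and since $\widetilde M$ is supported only on the vertices $v_1, \ldots, v_{n+1}$, only their preimages contribute. The definition of $\Phi_0$ gives $\Phi_0(v_i) = t(c_{i-1})$ (with $c_0 = e_{s(c)}$), so the indices $i$ contributing a copy of $\k$ to $\Phi_*(\widetilde M)(v)$ are exactly those in $I_v$. This produces a canonical $\k$-basis of $\Phi_*(\widetilde M)(v)$ indexed by $I_v$, matching $M(v)$ both in dimension and in the natural labelling of basis vectors.

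Next I would compare the linear maps. For an arrow $\alpha \in (\overline{\supp(M)})_1$, the preimage $\Phi_1^{-1}(\alpha)$ splits into two types: (i) those $\beta_i$ for which $c_i = \alpha$ or $c_i = \alpha^{-1}$, and (ii) the side-arrows $\alpha_{v_i}$ introduced by the blow-up. All arrows of type (ii) have one endpoint at an auxiliary vertex $v^{\alpha;i}$ where $\widetilde M$ vanishes, so they contribute the zero map. For type (i), $\widetilde M(\beta_i)$ is by construction the identity $\k \to \k$ between $\widetilde M(v_{i-1})$ and $\widetilde M(v_i)$, which after identifying the bases with $I_{s(\alpha)}$ and $I_{t(\alpha)}$ exactly sends $z_{i-1} \mapsto z_i$ when $c_i = \alpha$ and $z_i \mapsto z_{i-1}$ when $c_i = \alpha^{-1}$. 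Basis vectors not hit by any such $\beta_i$ are sent to zero by default, which is the Butler-Ringel convention.

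The main obstacle is purely bookkeeping: one must separately handle the case $c_i \in Q_1$ versus $c_i \in Q_1^{-1}$, take care of the endpoint indices $i=1$ and $i=n+1$ (where only one side-arrow family is present), and treat the length-zero case $c = e_v$ as a degenerate instance in which $\widetilde{Q_M}$ consists of the single vertex $v_1$ together with all side vertices. No conceptual difficulty is expected, since the quiver $\widetilde{Q_M}$ and the representation $\widetilde M$ were designed precisely so that their pushforward along $\Phi$ reproduces $M$.
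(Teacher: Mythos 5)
Your proposal is correct and is exactly the same approach as the paper, which simply states ``This follows from the construction'' without supplying the details you spell out. Your careful unwinding of $\Phi_*$ on vertices (matching the index set $I_v$) and on arrows (identity on the $\beta_i$, zero on the side-arrows into the $v^{\alpha;i}$ where $\widetilde M$ vanishes) is precisely the verification the authors left implicit.
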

		\begin{proof}
			This follows from the construction.
		\end{proof}

		\begin{defi}
			If $B$ is a finite dimensional $\k$-algebra with bound quiver $(Q,I)$ and $M$ is a $B$-module, the \emph{border $\d M$ of $M$} is the set of points in the closure $\overline{\supp(M)}$ of the support of $M$ in $Q$ and which do not lie in the support $\supp(M)$ of $M$.
		\end{defi}

		Thus, with the above notations, the border $\d \widetilde M$ consists of all the points in $(\widetilde{Q_M})_0$ which do not lie in the support of $\widetilde{M}$.
		
		\begin{lem}\label{lem:back2typeA}
			Let $B$ be a finite dimensional $\k$-algebra. Then for any string $B$-module $M$, the pair $(\widetilde{Q_{M}}, \d \widetilde M)$ is a blown-up ice quiver whose unfrozen part is of Dynkin type $\mathbb A$. Moreover, the representation $\widetilde{M}$ is a sincere unfrozen string representation of $\widetilde{Q_{M}}$.
		\end{lem}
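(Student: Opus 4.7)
The proof is essentially a bookkeeping exercise from the explicit construction of $\widetilde{Q_M}$ in Section \ref{ssection:coverings}. My plan is to verify the three axioms of a blown-up ice quiver and the string/sincerity conditions one by one, reading them off the construction.

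First, I would identify the unfrozen part of $(\widetilde{Q_M},\d\widetilde M)$ as the full subquiver on $\{v_1,\dots,v_{n+1}\}$, since by definition these are exactly the points at which $\widetilde M$ is nonzero. By inspection of the blow-up, the only arrows of $\widetilde{Q_M}$ joining two such points are the $\beta_i$, because every other arrow created in the process has a freshly introduced point of the form $v^{\alpha;i}$ as one of its endpoints. Hence the unfrozen part is the linear quiver of type $\mathbb{A}_{n+1}$, with the orientation induced by the sign pattern of $c$.

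Next, I would check the defining properties of a blown-up ice quiver. Finiteness is immediate from finiteness of $Q$ and of the string $c$. Connectedness follows because each new vertex $v^{\alpha;i}$ is joined by $\alpha_{v_i}$ to $v_i$, and the $v_i$'s form a connected chain via the $\beta_i$. Absence of loops and of $2$-cycles follows because the $\beta_i$'s traverse distinct newly created vertices and each $\alpha_{v_i}$ involves a distinct, fresh frozen point. For the blown-up condition, the construction attaches exactly one arrow $\alpha_{v_i}$ to each frozen vertex $v^{\alpha;i}$; and the no-arrows-between-frozen-points condition is clear because every arrow of $\widetilde{Q_M}$ involves at least one of the $v_i$.

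Finally, to see that $\widetilde M$ is a sincere unfrozen string representation, I would unwind the definitions: the support of $\widetilde M$ is by definition $\{v_1,\dots,v_{n+1}\}$, which coincides with the unfrozen part of $(\widetilde{Q_M},\d\widetilde M)$, giving unfrozen sincerity. For the string property, I would set $\widetilde c=\beta_1\cdots\beta_n$; the relation $s(\beta_{i+1})=v_{i+1}=t(\beta_i)$ makes $\widetilde c$ a walk, and $\beta_i\neq\beta_{i+1}^{-1}$ since $v_i\neq v_{i+2}$ (distinct fresh vertices). Because $\widetilde{Q_M}$ carries no relations, $\widetilde c$ is automatically a string, and comparing with the recipe in Section \ref{section:formula} for $M_{\widetilde c}$ shows $M_{\widetilde c}\simeq\widetilde M$.

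The only mild subtlety is the length-zero case $c=e_v$, which has to be treated separately but is immediate: then $\widetilde{Q_M}$ is a single unfrozen point $v_1$ surrounded by one frozen point per arrow of $Q$ incident to $v$, and all assertions hold by direct inspection. I do not expect any step to present a serious obstacle; the whole statement is a compatibility check between the blow-up construction and the axiomatics of Section \ref{section:realisabledata}.
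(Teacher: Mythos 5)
Your proposal is correct and takes essentially the same approach as the paper, whose proof is a one-line appeal to ``the construction'' for the first assertion and to the fact that $\widetilde M$ is an unfrozen sincere indecomposable representation of a type $\mathbb{A}$ quiver for the second. Your version simply spells out the same inspection; the only cosmetic difference is that you produce the string $\widetilde c=\beta_1\cdots\beta_n$ and the isomorphism $M_{\widetilde c}\simeq\widetilde M$ explicitly, whereas the paper invokes the (equivalent) fact that any indecomposable representation of a type $\mathbb{A}$ quiver is a string module.
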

		\begin{proof}
			The first assertion follows from the construction of $\widetilde{Q_M}$ and $\widetilde M$. For the second assertion, we observe that $\widetilde{M}$ is supported on the unfrozen part of $\widetilde{Q_{M}}$ which is of Dynkin type $\A$. It is unfrozen sincere and indecomposable by construction so that it is a string representation. 			 
		\end{proof}
	
		\begin{exmp}
			Let $B$ be the path algebra of the quiver $Q$ considered in Example \ref{exmp:blowQ} and let $M$ be the string module considered in that same example. Then, the representation $\widetilde{M}$ of the quiver $\widetilde{Q_M}$ is
			$$\xymatrix{
				\widetilde{M}: & 0 \ar[r] & \k \ar[d] & \ar[l]_{1_\k} \k \ar[r]^{1_\k} & \k \ar[d] & \ar[l] 0 \\
						& & 0 & 0 \ar[u] & 0
			}$$
			so that $(\widetilde{Q_{M}}, \d \widetilde M)$ is indeed a blown-up ice quiver with unfrozen part of Dynkin type $\mathbb A_3$.
		\end{exmp}

	\subsection{Normalisation}\label{ssection:normalisation}
		In this section $M$ denotes a string module over a finite dimensional algebra $B$ with bound quiver $(Q,I)$.

		\begin{defi}\label{defi:normalisation}
			The \emph{normalising vector} of $M$ is $\mathbf n_M =(n_i)_{i \in \overline{\supp(M)}_0} \in \N^{\overline{\supp(M)}_0}$ given by
			$$n_i = \<S_i,M\> - \sum_{j \in \Phi_0^{-1}(i)}\<S_j,\widetilde M\>$$
			for any $i \in \overline{\supp(M)}_0$ where the first truncated Euler form is considered in mod-$B$ and the second truncated Euler form is considered in mod-$\k \widetilde{Q_{M}}$.

			The \emph{normalising factor of $M$} is $$\mathbf x^{\mathbf n_M} = \prod_{i \in \overline{\supp(M)}_0} x_i^{n_i}.$$
		\end{defi}

		This normalisation is actually easy to compute in several usual situations as it is explained in Section \ref{section:morenormalisation}.

		\begin{exmp}\label{exmp:normalisation}
			Consider the finite dimensional algebra $B$ whose ordinary quiver is
			$$\xymatrix{
				&& 3 \ar[ld]_{\gamma} \\
				Q: & 1 \ar[rr]_{\alpha} && 2 \ar[lu]_{\beta}
			}$$
			and whose relations are given by the vanishing of all paths of length two, that is $\alpha \beta = \beta \gamma = \gamma \alpha =0$. It is a cluster-tilted algebra of type $\A_3$. Consider the projective module $M$ associated with the point $1$. It is a string $B$-module with string $\alpha$. The associated blown-up quiver $\widetilde{Q_M}$ is $3^{\gamma;1} \fl v_1 \fl v_2 \fl 3^{\beta;2}$ and the representation $\widetilde M$ is $0 \fl \k \xrightarrow{1_{\k}} \k \fl 0$. Then, one has $\<S_1,M\> = \<S_{v_1},\widetilde M\> = 0$, $\<S_2,M\> = \<S_{v_2},\widetilde M\> = 1$ and $\<S_3,M\>=0$ whereas $\<S_{3^{\beta;2}},\widetilde M\> + \<S_{3^{\gamma;1}},\widetilde M\> =0-1 = -1$. Thus, the normalisation is $\n_M=(0,0,1)$.
		\end{exmp}

	\subsection{Blow-ups and cluster characters}
		We keep the notations of Section \ref{ssection:coverings}. As usual, we naturally identify $\mathcal L(\mathbf x_{\overline{\supp(M)}})$ to a subring of $\mathcal L(\mathbf x_{\mathcal Q})$. We consider the following surjective morphism of $\Z$-algebras~:
		$$\pi: \mathcal L(\mathbf x_{\widetilde{Q_M}}) \fl \mathcal L(\mathbf x_{\overline{\supp(M)}})$$
		defined by
		$$\pi(x_{v_i}) = x_{t(c_{i-1})}$$
		for any $1 \leq i \leq n+1$
		and
		$$\pi(x_{v^{\alpha;i}}) = x_v$$
		for any $v \in \overline{\supp(M)}$ and any $\alpha,i$.

		We now observe that for any unfrozen string module $M$ with respect to a realisable quadruple $(\mathcal Q,F,\CC,T)$, the Laurent polynomial $X^T_M$ is in the image of the function $\pi$.
		\begin{lem}
			Let $(\mathcal Q,F,\CC,T)$ be a realisable quadruple and let $M$ be an unfrozen string module. Then
			$$X^T_M \in {\mathcal L}({\bf x}_{\overline{\supp(M)}}).$$
		\end{lem}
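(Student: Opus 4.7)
The plan is to read the exponent of each variable $x_i$ in the defining expansion
$$X^T_M = \sum_{\mathbf e \in \N^{\mathcal Q_0}} \chi(\Gr_{\mathbf e}(M)) \prod_{i \in \mathcal Q_0} x_i^{\<S_i, \mathbf e\>_a - \<S_i, M\>}$$
and to show it vanishes whenever $i \in \mathcal Q_0 \setminus \overline{\supp(M)}_0$. Any term with $\chi(\Gr_{\mathbf e}(M)) \neq 0$ is forced to have $\mathbf e = \ddim N$ for some submodule $N \hookrightarrow M$, so $\mathbf e$ is supported on $\supp(M)$. By Lemma \ref{lem:SiK0} the functional $\<S_i, -\>$ descends to $K_0(\modd B_T)$, so $\<S_i, \mathbf e\>_a$ coincides with $\<S_i, N\>_a = \<S_i, N\> - \<N, S_i\>$ computed on any such $N$. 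It is therefore enough to establish the following key vanishing~: for every $i \in \mathcal Q_0 \setminus \overline{\supp(M)}_0$ and every $B_T$-module $N$ whose support lies in $\supp(M)$,
$$\Hom_{B_T}(S_i, N) = \Ext^1_{B_T}(S_i, N) = \Hom_{B_T}(N, S_i) = \Ext^1_{B_T}(N, S_i) = 0.$$
Applied to a submodule $N \hookrightarrow M$ this yields $\<S_i, N\>_a = 0$, while applied to $N = M$ it yields $\<S_i, M\> = 0$, killing the exponent of $x_i$ in every monomial.

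The two Hom-vanishings are immediate, since the top and socle of $N$ are supported on $\supp(N) \subseteq \supp(M)$, which by hypothesis does not contain $i$. For the two Ext-vanishings I would appeal to the initial segments of the minimal projective resolution of $S_i$ and of the minimal injective coresolution of $S_i$~: because $I$ is an admissible ideal, these take the form
$$\bigoplus_{\alpha \in \mathcal Q_1(i,-)} P_{t(\alpha)} \fl P_i \fl S_i \fl 0 \qquad \text{and} \qquad 0 \fl S_i \fl I_i \fl \bigoplus_{\beta \in \mathcal Q_1(-,i)} I_{s(\beta)}.$$
Applying $\Hom_{B_T}(-, N)$ and $\Hom_{B_T}(N, -)$ respectively, one reads off that $\Ext^1_{B_T}(S_i, N)$ is a subquotient of $\bigoplus_{\alpha \in \mathcal Q_1(i,-)} N(t(\alpha))$, while $\Ext^1_{B_T}(N, S_i)$ is a subquotient of $\bigoplus_{\beta \in \mathcal Q_1(-,i)} DN(s(\beta))$. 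The hypothesis $i \notin \overline{\supp(M)}_0$ means precisely that no vertex adjacent to $i$ in $\mathcal Q$ lies in $\supp(M) \supseteq \supp(N)$, so every summand of both direct sums vanishes.

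The only real conceptual obstacle is that $B_T$ is in general not hereditary, so one might fear that higher syzygies of $S_i$ drag information from vertices far away into $\Ext^1$. The point is that the resolution is only used to depth one, and for any basic algebra presented by an admissible ideal the first two terms of the minimal projective resolution and of the minimal injective coresolution of $S_i$ depend only on the arrows of $\mathcal Q$ incident to $i$; relations only affect the kernels of these first maps and contribute to $\Ext^{\geq 2}$. This localises the whole argument to the first-neighbourhood of $i$ in $\mathcal Q$, where the assumption $i \notin \overline{\supp(M)}_0$ closes the proof.
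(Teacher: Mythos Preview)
Your proof is correct and follows essentially the same approach as the paper's: both argue that for $i\notin\overline{\supp(M)}_0$ the exponent $\<S_i,\mathbf e\>_a-\<S_i,M\>$ vanishes in every nonzero term of the defining sum. The paper simply asserts $\<S_i,\mathbf e\>_a=0$ and $\<S_i,M\>=0$ in one line, whereas you supply the underlying justification via the first terms of the minimal projective and injective resolutions of $S_i$; this is exactly the mechanism the paper is tacitly invoking, so the arguments coincide.
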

		\begin{proof}
			We recall that unfrozen $B_T$-modules are identified via $\Hom_{\CC}(T,-)$ with the objects $M$ in $\mathcal U/(\add T[1])$. We have
			$$X^T_M=\sum_{\mathbf e \in \N^{\mathcal Q_0}}\chi(\Gr_{\mathbf e}(M))\prod_{i \in \mathcal Q_0}x_i^{\<S_i,\mathbf e\>_a-\<S_i,M\>}.$$
			Any dimension vector $\mathbf e$ such that $\Gr_{\mathbf e}(M) \neq \emptyset$ is supported on $\supp(M)$. Thus, if $i \in \mathcal Q_0$ is not in $\overline{\supp(M)}$, then $\<S_i,\mathbf e\>_a=0$ and $\<S_i,M\>=0$. In particular,
			$$X^T_M=\sum_{\mathbf e \in \N^{\mathcal Q_0}}\chi(\Gr_{\mathbf e}(M))\prod_{i \in \overline{\supp(M)}}x_i^{\<S_i,\mathbf e\>_a-\<S_i,M\>} \in{\mathcal L}({\bf x}_{\overline{\supp(M)}}).$$
		\end{proof}
	
		Let $\widetilde{\CC}$ be the cluster category of the quiver $\widetilde{Q_M}$ and let $\widetilde T = \k \widetilde{Q_M}$ be the path algebra of $\widetilde{Q_M}$, which is identified with a tilting object in $\widetilde{\CC}$. We denote by $\widetilde X_?$ the corresponding cluster character with values in $\mathcal L(\mathbf x_{\widetilde{Q_M}})$.
		\begin{prop}\label{prop:piX}
			Let $(\mathcal Q,F,\CC,T)$ be a realisable quadruple and let $M$ be an unfrozen string module. Then,
			$$\pi(\widetilde X_{\widetilde M})=\mathbf x^{\mathbf n_M} X^T_M.$$
		\end{prop}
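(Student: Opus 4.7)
My plan is to expand both sides of the identity as sums over submodules and match them term by term using the common combinatorial structure of string-module submodules.

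First, by Lemma \ref{lem:back2typeA}, $\widetilde M$ is a sincere unfrozen string representation of $\widetilde{Q_M}$ with underlying walk $\widetilde c = \beta_1\cdots\beta_n$ and of Dynkin type $\A$ support; its Grassmannians of submodules are therefore disjoint unions of points, indexed by the combinatorial substrings of $\widetilde c$ parametrising its canonical submodules. The winding $\Phi_1$ sets up a canonical bijection between these and the analogous set of substrings of $c = \s(M)$; at the module level, if $\widetilde N$ denotes the canonical submodule of $\widetilde M$ associated with a substring $s$ and $N$ the corresponding submodule of $M$, then $\Phi_*(\widetilde N) = N$. Combining this with the Cerulli--Haupt formula \cite{Cerulli:string, Haupt:string} for $\chi(\Gr_{\mathbf e}(M))$, one obtains
\[
\chi(\Gr_{\mathbf e}(M)) = \sum_{\widetilde{\mathbf e}:\phi(\widetilde{\mathbf e}) = \mathbf e} \chi(\Gr_{\widetilde{\mathbf e}}(\widetilde M)).
\]

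Applying $\pi$ to the cluster character formula defining $\widetilde X_{\widetilde M}$, regrouping the resulting sum according to this bijection, and using the defining identity $n_i = \<S_i, M\> - \sum_{j\in\Phi_0^{-1}(i)}\<S_j, \widetilde M\>$ to cancel the $\widetilde M$-dependent exponents, the desired equality $\pi(\widetilde X_{\widetilde M}) = \mathbf x^{\mathbf n_M} X^T_M$ reduces to showing, for every pair $(\widetilde N, N)$ arising from a substring $s$ and every $i \in \overline{\supp(M)}_0$, the identity
\[
\sum_{j \in \Phi_0^{-1}(i)} \<S_j, \ddim \widetilde N\>_a = \<S_i, \ddim N\>_a,
\]
where the left-hand antisymmetric Euler form is computed in $\modd \k \widetilde{Q_M}$ and the right-hand one in $\modd B_T$.

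The main obstacle is to prove this Euler form identity. The key observation is that for a canonical substring submodule $\widetilde N$, the dimension vector $\ddim \widetilde N$ vanishes on the leaves of $\widetilde{Q_M}$, so only arrows of $\widetilde{Q_M}$ incident to the string at a vertex of $s$ contribute to the left-hand side. I would compute the left-hand side via the hereditary formula on $\widetilde{Q_M}$ and the right-hand side via Palu's formula $\<S_i, \mathbf e\>_a = \sum_u b_{ui} e_u$ on the ordinary quiver $\mathcal Q$ of $B_T$, then partition the contributions by the arrow $\alpha = \Phi_1(\widetilde \alpha) \in Q_1$ covered by each preimage. The preimages of a given $\alpha$ split into the blown-up walk arrows $\beta_k$ (with $c_k = \alpha^{\pm 1}$) and the dangling arrows $\alpha_{v_k}$; for each vertex $v_j \in s$ with $\Phi_0(v_j) \in \{s(\alpha), t(\alpha)\}$, a case analysis shows that exactly one of these preimages is ``active'' at $v_j$, contributing exactly one unit on the left and matching the corresponding $\pm e_u$ on the right. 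The string condition $c_k \ne c_{k+1}^{-1}$ is what excludes two cases arising simultaneously at $v_j$. Once this bookkeeping is complete, the term-by-term matching provided by the Grassmannian bijection concludes the proof.
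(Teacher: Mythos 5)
Your proposal is correct and follows essentially the same route as the paper's proof: both reduce the statement to Haupt's relation $\chi(\Gr_{\mathbf e}(M)) = \sum_{\mathbf f\in\phi^{-1}(\mathbf e)}\chi(\Gr_{\mathbf f}(\widetilde M))$ together with the antisymmetrised Euler form compatibility $\sum_{j\in\Phi_0^{-1}(i)}\<S_j,\mathbf f\>_a = \<S_i,\phi(\mathbf f)\>_a$, the latter being exactly the paper's equation (5.1). The paper establishes (5.1) by using Lemma \ref{lem:SiK0} to reduce to simple modules and then simply asserts the resulting arrow-count identity $\sum_{j\in\Phi_0^{-1}(i)}\<S_j,S_l\>_a=\<S_i,S_k\>_a$ without spelling it out; you instead quote what you call ``Palu's formula'' $\<S_i,\mathbf e\>_a=\sum_u b_{ui}e_u$ (which packages Lemma \ref{lem:SiK0} plus the simple-to-simple computation) and then supply the arrow-counting verification that the paper omits. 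The cleanest way to finish that verification is to note that, because $\Phi$ is a winding and by construction of $\widetilde{Q_M}$, the arrows of $\widetilde{Q_M}$ incident to any walk vertex $v_p$ biject, orientation-preservingly, with the arrows of $\mathcal Q$ incident to $\Phi_0(v_p)$, and each such arrow has its other endpoint mapping to a point of $\Phi_0^{-1}$ of the correct vertex of $\mathcal Q$; this directly yields $\sum_{j\in\Phi_0^{-1}(i)}b_{v_p,j}(\widetilde{Q_M})=b_{\Phi_0(v_p),i}(\mathcal Q)$, without the ``active preimage'' bookkeeping you sketch. Your restriction to dimension vectors $\mathbf f$ coming from actual submodules (rather than all of $\phi^{-1}(\mathbf e)$, as in the paper) is harmless since only nonempty Grassmannians contribute.
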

		\begin{proof}
			We have
			\begin{align*}
				X^T_M
					& = \sum_{\mathbf e \in \N^{\mathcal Q_0}} \chi(\Gr_{\mathbf e}(M)) \prod_{i \in \mathcal Q_0} x_i^{\<S_i,\mathbf e\>_a-\<S_i,M\>} \\
					& = \sum_{\mathbf e \in \N^{\supp(M)_0}} \chi(\Gr_{\mathbf e}(M)) \prod_{i \in (\overline{\supp(M)})_0} x_i^{\<S_i,\mathbf e\>_a-\<S_i,M\>} \\
			\end{align*}
			Now, for any $\mathbf e \in \N^{\supp(M)_0}$, since $\Phi$ is a winding of quivers and $\Phi_*(\widetilde M) = M$, it follows from \cite[Theorem 1.2 (a)]{Haupt:string} that
			$$\chi(\Gr_{\mathbf e}(M))=\sum_{\mathbf f \in \phi^{-1}(\mathbf e)} \chi(\Gr_{\mathbf f}(\widetilde M)).$$

			Fix $\mathbf e \in \N^{\supp(M)_0}$ and $\mathbf f \in \phi^{-1}(\mathbf e)$. We now prove that for any $i \in \overline{\supp(M)}$, we have
			\begin{equation}\label{eq:Sia}
				x_{i}^{\<S_{i},\mathbf e\>_a}=\pi\left( \prod_{j \in \Phi_0^{-1}(i)} x_j^{\<S_j,\mathbf f\>_a}\right).	 
			\end{equation}
			where the anti-symmetrised Euler form in the left-hand side is taken in mod-$\k\widetilde{Q_M}$ and the anti-symmetrised Euler forms in the right-hand side are taken in mod-$B$.

			First note that
			$$\pi\left( \prod_{j \in \Phi_0^{-1}(i)} x_j^{\<S_j,\mathbf f\>_a}\right)=x_i^{\sum_{j \in \Phi_0^{-1}(i)} \<S_j,\mathbf f\>_a}$$
			so that it is enough to prove that
			$$\<S_{i},\mathbf e\>_a=\sum_{j \in \Phi_0^{-1}(i)} \<S_j,\mathbf f\>_a.$$
		
			For any module $U$ and any integer $n \geq 1$, we denote by $nU$ the direct sum of $n$ copies of $U$. Now, since $\<S_i,-\>_a$ is well-defined on the Grothendieck group $K_0(\modd B_T)$ (Lemma \ref{lem:SiK0}), we have
			\begin{align*}
				\<S_i,\mathbf e \>_a
					& = \sum_{k \in \supp(M)_0} \<S_i,e_k S_k\>_a \\
					& = \sum_{k \in \supp(M)_0} \<S_i,\left(\sum_{l \in \Phi_0^{-1}(k)} f_l \right) S_k\>_a \\
					& = \sum_{k \in \supp(M)_0} \left(\sum_{l \in \Phi_0^{-1}(k)} f_l \right) \<S_i, S_k\>_a
			\end{align*}
			and
			\begin{align*}
				\sum_{j \in \Phi_0^{-1}(i)} \<S_j,\mathbf f \>_a
					& = \sum_{k \in \supp(M)_0} \sum_{l \in \Phi_0^{-1}(k)} \sum_{j \in \Phi_0^{-1}(i)} f_l \<S_j,S_l\>_a \\
					& = \sum_{k \in \supp(M)_0} \sum_{l \in \Phi_0^{-1}(k)} f_l \left( \sum_{j \in \Phi_0^{-1}(i)} \<S_j,S_l\>_a \right).
			\end{align*}
			But for any $k \in \supp(M)_0$ and any $l \in \Phi_0^{-1}(k)$, we have
			$$\sum_{j \in \Phi_0^{-1}(i)} \<S_j,S_l\>_a = \<S_i,S_k\>_a$$
			which proves \eqref{eq:Sia}.

			Now,
			$$\widetilde X_{\widetilde M} = \sum_{\mathbf f \in \N^{(\widetilde{Q_M})_0}} \chi(\Gr_{\mathbf f}(\widetilde M)) \prod_{i \in (\widetilde{Q_M})_0} x_i^{\<S_i,\mathbf f \>_a - \<S_i,\widetilde M\> }$$
			so that,
			\begin{align*}
				\pi(\widetilde X_{\widetilde M})
					& = \sum_{\mathbf e \in \N^{\overline{\supp(M)}_0}} \sum_{\mathbf f \in \phi^{-1}(\mathbf e)} \chi(\Gr_{\mathbf f}(\widetilde M)) \prod_{i \in \overline{\supp(M)}_0} \pi\left(\prod_{j \in \Phi_0^{-1}(i)} x_j^{\<S_j,\mathbf f \>_a - \<S_j,\widetilde M\>} \right)\\
					& = \sum_{\mathbf e \in \N^{\overline{\supp(M)}_0}} \left(\sum_{\mathbf f \in \phi^{-1}(\mathbf e)} \chi(\Gr_{\mathbf f}(\widetilde M)) \right) \prod_{i \in \overline{\supp(M)}_0} x_i^{\<S_i,\mathbf e \>_a} \pi\left(\prod_{j \in \Phi_0^{-1}(i)} x_j^{- \<S_j,\widetilde M\>}\right)\\
					& = \sum_{\mathbf e \in \N^{\overline{\supp(M)}_0}} \chi(\Gr_{\mathbf e}(M)) \prod_{i \in \overline{\supp(M)}_0} x_i^{\<S_i,\mathbf e\>_a - \sum_{j \in \Phi_0^{-1}(i)} \<S_j,\widetilde M\>}\\
					& = \mathbf x^{\mathbf n_M} X^T_M.
			\end{align*}
			This finishes the proof.
		\end{proof}

	\subsection{The main theorem}\label{ssection:main}
		We can now prove the main theorem of the article~:
		\begin{theorem}\label{theorem:main}
			Let $(\mathcal Q,F,\CC,T)$ be a realisable quadruple and let $M$ be an unfrozen string module with respect to this quadruple. Then
			$$X^T_M=\frac{1}{\mathbf x^{\mathbf n_M}}L_{M}.$$
		\end{theorem}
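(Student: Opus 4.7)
The plan is to reduce everything to the already-established type $\A$ case (Theorem \ref{theorem:typeA}) by using the blow-up construction from Section \ref{ssection:coverings}, transporting the equality through the collapsing map $\pi$, and matching the normalisation via Proposition \ref{prop:piX}. First I would attach to the string module $M$ its blown-up ice quiver $(\widetilde{Q_M}, \d \widetilde M)$. By Lemma \ref{lem:back2typeA}, this is a blown-up ice quiver whose unfrozen part is of Dynkin type $\A$, and $\widetilde M$ is an unfrozen sincere string representation. In particular, $\widetilde M$ is the (unique) unfrozen sincere module, so it trivially qualifies as a submodule of the unfrozen sincere module. Since the unfrozen part is acyclic, the remark following the definition of blown-up ice quiver embeds $(\widetilde{Q_M}, \d \widetilde M)$ into a realisable quadruple $(\widetilde{Q_M}, \d \widetilde M, \widetilde{\CC}, \widetilde T)$, with $\widetilde T = \k\widetilde{Q_M}$.

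Next, I would apply Theorem \ref{theorem:typeA}(b) to $\widetilde M$ inside this new realisable quadruple, obtaining
$$\widetilde X_{\widetilde M} = L_{\widetilde M}.$$
Applying the morphism $\pi : \mathcal L(\x_{\widetilde{Q_M}}) \to \mathcal L(\x_{\overline{\supp(M)}})$ to both sides, the left-hand side is handled by Proposition \ref{prop:piX}, which gives $\pi(\widetilde X_{\widetilde M}) = \x^{\n_M} X^T_M$. So the whole result reduces to the identity
$$\pi(L_{\widetilde M}) = L_M,$$
after which the theorem follows from the displayed equation $\x^{\n_M} X^T_M = L_M$.

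The remaining point, which I expect to be the main obstacle, is precisely this matrix-product identity $\pi(L_{\widetilde M}) = L_M$. The argument is a careful verification using the construction of the winding $\Phi : \widetilde{Q_M} \to \overline{\supp(M)}$. By definition, the string $\s(\widetilde M) = \beta_1 \cdots \beta_n$ satisfies $\Phi_1(\beta_i) = c_i$ with matching sources and targets, so $\pi$ sends each $A(\beta_i)$ to $A(c_i)$. For the diagonal factors, the arrows in $(\widetilde{Q_M})_1(v_i,-) \cup (\widetilde{Q_M})_1(-,v_i)$ other than $\beta_i^{\pm 1}, \beta_{i-1}^{\pm 1}$ are exactly the new arrows $\alpha_{v_i}$, one for each $\alpha \in Q_1$ adjacent to $t(c_{i-1})$ with $\alpha \neq c_i^{\pm 1}, c_{i-1}^{\pm 1}$; under $\pi$, the variable on the new endpoint $t(\alpha)^{\alpha;i}$ (or $s(\alpha)^{\alpha;i}$) becomes $x_{t(\alpha)}$ (or $x_{s(\alpha)}$). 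This matches exactly the products defining $V_c(i)$ in Section \ref{ssection:formula}, so $\pi(V_{\widetilde M}(i)) = V_M(i)$ for every $i$, and similarly the overall normalising monomial $1/\prod_j x_{v_j}$ in $L_{\widetilde M}$ is sent to $1/\prod_i x_{t(c_{i-1})}$. Assembling the pieces factor by factor through the matrix formula \eqref{eq:Lc} gives $\pi(L_{\widetilde M}) = L_M$, completing the proof.
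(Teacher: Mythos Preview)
Your proposal is correct and follows essentially the same route as the paper's proof: reduce to the type~$\A$ case via the blow-up $(\widetilde{Q_M},\d\widetilde M)$, apply Theorem~\ref{theorem:typeA} to obtain $\widetilde X_{\widetilde M}=\widetilde L_{\widetilde M}$, and then push forward through $\pi$ using Proposition~\ref{prop:piX}. The only difference is that you spell out the verification of $\pi(L_{\widetilde M})=L_M$ in detail, whereas the paper simply asserts that this identity ``follows directly from the definitions.''
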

		\begin{proof}
			We first notice that with the above notations, it follows directly from the definitions that
			$$\pi(\widetilde L_{\widetilde M})=L_{M}.$$
			Thus,
			$$X^T_M = \frac{1}{\mathbf x^{\mathbf n_M}} \pi(\widetilde X_{\widetilde M}) =  \frac{1}{\mathbf x^{\mathbf n_M}} \pi(\widetilde L_{\widetilde M}) = \frac{1}{\x^{\mathbf n_M}} L_{M}$$
			where the first equality follows from Proposition \ref{prop:piX}, the second equality from Theorem \ref{theorem:typeA} and Lemma \ref{lem:back2typeA} and the third from the above observation.
		\end{proof}

\section{Applications}\label{section:applications}
	In this section, we give several applications of Theorem \ref{theorem:main}.

	\subsection{Computing cluster variables}
		We now prove that the formula given in equation \eqref{eq:Lc} allows one to compute the cluster variables in several situations.
		\begin{corol}\label{corol:LMvars}
			Let $(\mathcal Q,F)$ be an ice quiver. Assume that there exists a realisable quadruple $(\mathcal Q,F,\CC,T)$ such that every rigid object in $\CC$ is reachable from $T$. Then for any unfrozen $\End_{\CC}(T)$-module $M$ which is rigid and indecomposable, the Laurent polynomial $\frac{1}{\x^{\mathbf n_M}} L_M$ is a cluster variable in $\mathcal A(\mathcal Q,F)$.
		\end{corol}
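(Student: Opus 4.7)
The plan is to derive this corollary by combining Theorem \ref{theorem:main} with the Fu--Keller Theorem \ref{theorem:FK}. By Theorem \ref{theorem:main}, the Laurent polynomial $\frac{1}{\x^{\mathbf n_M}}L_M$ coincides with the cluster character $X^T_M$, so it suffices to exhibit $X^T_M$ as a cluster variable in $\mathcal A(\mathcal Q,F)$.

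To do this, I would lift $M$ through the equivalence $\Hom_{\CC}(T,-): \CC/(\add T[1]) \xrightarrow{\sim} \modd B_T$ to an indecomposable object $\overline M$ in $\CC$, chosen to lie in $\mathcal U$ (possible because $M$ is unfrozen) and not in $\add T[1]$ (since $M$ is non-zero). With the convention fixed in Section \ref{section:clusterchar}, one has $X^T_M = X^T_{\overline M}$, so the task reduces to proving that $X^T_{\overline M}$ is a cluster variable. By Theorem \ref{theorem:FK}, this will follow as soon as $\overline M$ is known to be rigid in $\CC$ and reachable from $T$; reachability is immediate from the hypothesis of the corollary, once rigidity is established.

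The crucial intermediate step is therefore to argue that $\overline M$ is rigid in $\CC$. Under the 2-Calabi--Yau cluster-tilting framework, the standard comparison underlying the Fu--Keller construction relates $\Ext^1_\CC(\overline M, \overline M)$ to $\Ext^1_{B_T}(M,M)$ in such a way that the assumed vanishing of the latter forces the vanishing of the former, so $\overline M$ is rigid and indecomposable in $\CC$. The hypothesis then gives that $\overline M$ is reachable from $T$, Theorem \ref{theorem:FK} yields that $X^T_{\overline M}$ is a cluster variable in $\mathcal A(\mathcal Q,F)$, and Theorem \ref{theorem:main} concludes. The only point requiring real care is this rigidity transfer from $M$ to its lift $\overline M$; it is a standard consequence of the 2-Calabi--Yau property combined with the cluster-tilting structure, and no new computation is needed beyond what is already available in the references cited for Theorem \ref{theorem:FK}.
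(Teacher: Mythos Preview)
Your proof is correct and follows essentially the same route as the paper's: lift $M$ to an indecomposable object $\overline M$ in $\CC$, transfer rigidity from $M$ to $\overline M$, invoke the reachability hypothesis, and apply Theorem~\ref{theorem:FK}, with Theorem~\ref{theorem:main} supplying the identification $\frac{1}{\x^{\mathbf n_M}}L_M = X^T_M$. The only cosmetic difference is that the paper cites \cite{Smith:cluster, FL:cluster} for the rigidity-lifting step you flagged as crucial, whereas you appeal to the general 2-Calabi--Yau cluster-tilting framework; and you make the invocation of Theorem~\ref{theorem:main} explicit where the paper leaves it tacit.
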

		\begin{proof}
			We identify $M$ with an indecomposable lifting in $\CC$ for the functor $\Hom_{\CC}(T,-)$, which is also rigid in $\CC$ by \cite{Smith:cluster, FL:cluster}. It follows from Theorem \ref{theorem:FK} that the map $M \mapsto X^T_M$ induces a surjection from the set of reachable rigid objects in  to the set of cluster variables in $\mathcal A(\mathcal Q,F)$.  It is also rigid in $\CC$, hence, by hypothesis, it is reachable and thus $X^T_{M}$ is a cluster variable in $\mathcal A(\mathcal Q,F)$.
		\end{proof}

		\begin{corol}
			Let $B_T$ be a cluster-tilted algebra. Then for any rigid string $B_T$-module $M$, the Laurent polynomial $\frac{1}{\x^{\mathbf n_M}} L_M$ is a cluster variable in the coefficient-free cluster algebra associated with the ordinary quiver of $B_T$.
		\end{corol}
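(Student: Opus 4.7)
The plan is to deduce this directly from Corollary \ref{corol:LMvars} applied to the realisable quadruple provided by Example \ref{exmp:realacyclic}. Specifically, since $B_T$ is a cluster-tilted algebra, by definition there exists an acyclic quiver $Q$ and a tilting object $T$ in the cluster category $\CC$ of $Q$ such that $B_T = \End_{\CC}(T)$. If $Q_T$ denotes the ordinary quiver of $B_T$, then Example \ref{exmp:realacyclic} tells us that $(Q_T, \emptyset, \CC, T)$ is a realisable quadruple with empty set of frozen points, and that every $B_T$-module is automatically unfrozen. In particular the coefficient-free cluster algebra $\mathcal A(Q_T, \emptyset)$ is precisely the cluster algebra associated with the ordinary quiver of $B_T$.

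Next I would check the reachability hypothesis of Corollary \ref{corol:LMvars}. In the cluster category of an acyclic quiver, any basic rigid object is a direct summand of a cluster-tilting object, and every cluster-tilting object is reachable from $T$ by a finite sequence of mutations. This is classical and due to Buan--Marsh--Reineke--Reiten--Todorov \cite{BMRRT} (with extensions by Buan--Iyama--Reiten--Scott \cite{BIRS}). Thus every rigid object in $\CC$ is reachable from $T$, and the hypothesis of Corollary \ref{corol:LMvars} is satisfied.

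Now let $M$ be a rigid string $B_T$-module. Since $F = \emptyset$, $M$ is unfrozen, so all the hypotheses of Corollary \ref{corol:LMvars} hold for $M$. Applying it directly gives that $\frac{1}{\x^{\mathbf n_M}} L_M$ is a cluster variable in $\mathcal A(Q_T, \emptyset)$, which is the desired conclusion.

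The proof is essentially a concatenation of three inputs, so there is no real obstacle to overcome; the only subtlety is to make sure that the rigidity of $M$ as a $B_T$-module lifts to rigidity of a suitable preimage in $\CC$ under $\Hom_{\CC}(T,-)$, but this is precisely what is used in the proof of Corollary \ref{corol:LMvars} (citing \cite{Smith:cluster, FL:cluster}) and therefore requires no new argument here.
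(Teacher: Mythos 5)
Your proof is correct and follows essentially the same route as the paper: set up the realisable quadruple $(Q_T,\emptyset,\CC,T)$ from Example \ref{exmp:realacyclic}, invoke \cite{BMRRT} for the reachability of all rigid indecomposables in the cluster category, and then apply Corollary \ref{corol:LMvars}. No substantive differences.
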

		\begin{proof}
			Let $\CC$ be a cluster category, $T$ be a tilting object in $\CC$ and $Q_T$ be the ordinary quiver of $B_T$. Then the quadruple $(Q_T,\emptyset, \CC,T)$ is realisable (see Example \ref{exmp:realacyclic}). Moreover, it follows from \cite{BMRRT} that every indecomposable rigid object in $\CC$ is reachable from $T$. The result thus follows from Corollary \ref{corol:LMvars}.
		\end{proof}
	
		\begin{corol}
			Let $B_T$ be a cluster-tilted algebra of Dynkin type $\A$ or euclidean type $\widetilde{\A}$. Then, the map
			$$M \mapsto \frac{1}{\x^{\mathbf n_M}} L_M$$
			induces a bijection from the set of indecomposable rigid $B_T$-modules to the set of cluster variables in the coefficient-free cluster algebra associated with the ordinary quiver of $B_T$ which do not belong to the initial cluster.
		\end{corol}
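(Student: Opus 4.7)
The plan is to reduce the statement to the previous corollary together with known facts about the representation theory of cluster-tilted algebras of Dynkin type $\A$ and euclidean type $\widetilde{\A}$, and the Fu--Keller bijection. First, I would invoke the result of Assem--Br\"ustle--Charbonneau--Plamondon \cite{ABCP} (or the classification for type $\A$ by Buan--Vatne) asserting that every cluster-tilted algebra of type $\A$ or $\widetilde{\A}$ is a gentle algebra, hence in particular a string algebra. Consequently, every indecomposable $B_T$-module is either a string module or a band module. Since band modules occur in one-parameter families, they admit non-trivial self-extensions and are therefore never rigid. Hence every indecomposable rigid $B_T$-module is automatically a string module.

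Next, I would apply the previous corollary to conclude that, for every indecomposable rigid $B_T$-module $M$, the Laurent polynomial $\frac{1}{\x^{\mathbf n_M}} L_M$ is a cluster variable of the coefficient-free cluster algebra associated with the ordinary quiver of $B_T$. By Theorem \ref{theorem:main}, this Laurent polynomial equals $X^T_M$, so the proposed map coincides with $M \mapsto X^T_M$ on indecomposable rigid $B_T$-modules. The image lies outside the initial cluster since the initial cluster variables $x_i$ are the images of the shifted summands $T_i[1]$ under $X^T_?$, and these shifted summands are not in $\mathcal U/(\add T[1])$, i.e., they do not correspond to $B_T$-modules.

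For bijectivity, I would use the Fu--Keller Theorem \ref{theorem:FK}: the map $X^T_?$ sends reachable rigid indecomposables of $\CC$ onto the set of cluster variables of $\mathcal A(\mathcal Q,\emptyset)$, and by the Caldero--Keller--type denominator argument the restriction to indecomposables outside $\add T[1]$ is injective (since different indecomposable rigid $B_T$-modules give Laurent polynomials with distinct denominators $\x^{\ddim M}$, up to normalisation). To conclude surjectivity onto non-initial cluster variables, one needs that every indecomposable rigid object in the cluster category of type $\A$ or $\widetilde{\A}$ is reachable from $T$; this is established in \cite{BMRRT} for the Dynkin case and, for $\widetilde{\A}$, follows from the classification of rigid objects in cluster categories of tame type (every rigid regular object lies in an exceptional tube and is reachable, while transjective rigid objects are clearly reachable).

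The main obstacle is precisely this last point: handling the euclidean case $\widetilde{\A}$ requires invoking the description of rigid objects in the cluster category and their reachability, together with the fact that band modules are excluded by rigidity. Once these two ingredients are in place, the rest is a formal combination of the previous corollary, Theorem \ref{theorem:main}, and the Fu--Keller bijection.
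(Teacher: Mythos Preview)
Your proposal is essentially correct and follows the same overall strategy as the paper: use \cite{ABCP} to see that cluster-tilted algebras of types $\A$ and $\widetilde{\A}$ are string algebras, deduce that rigid indecomposables are string modules, and then identify the map $M\mapsto \frac{1}{\x^{\mathbf n_M}}L_M$ with $M\mapsto X^T_M$ via Theorem~\ref{theorem:main}.

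Where you diverge is in establishing the bijection. The paper simply invokes \cite{Palu} at this point, which in the acyclic cluster-category setting yields directly that $M\mapsto X^T_M$ is a bijection from indecomposable rigid $B_T$-modules to non-initial cluster variables. You instead decompose the bijection into injectivity (via a denominator argument) and surjectivity (via reachability plus Theorem~\ref{theorem:FK}). This works, but two remarks are in order. First, your injectivity step is somewhat loose: saying that distinct rigid indecomposables have distinct denominators ``$\x^{\ddim M}$, up to normalisation'' needs a justification (e.g.\ via \cite{BMR3}) and the parenthetical ``up to normalisation'' obscures rather than clarifies. Second, your concern about reachability in type $\widetilde{\A}$ being an obstacle is misplaced: the paper already notes (in the proof of the preceding corollary) that \cite{BMRRT} gives reachability of all rigid indecomposables in any acyclic cluster category, so no separate tame-type classification is required. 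Citing \cite{Palu} directly, as the paper does, avoids both issues in one stroke.
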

		\begin{proof}
			Cluster-tilted algebras of Dynkin type $\A$ or euclidean type $\widetilde{\A}$ are string algebras \cite{ABCP}. Thus, each indecomposable rigid $B_T$-module is a string module. Now if $\CC$ is a cluster category and $T$ is a tilting object in $\CC$ such that $B_T = \End_{\CC}(T)$, it follows in particular from Theorem \ref{theorem:main} that for every indecomposable rigid $B_T$-module $M$, we have $X^T_M=\frac{1}{\mathbf x^{\mathbf n_M}}L_{M}$. But then it follows from \cite{Palu} that $M \mapsto \frac{1}{\x^{\mathbf n_M}} L_M$ is a bijection from the set of indecomposable rigid $B_T$-modules to the set of cluster variables in the coefficient-free cluster algebra $\mathcal A(Q_T,\emptyset)$ which do not belong to the initial cluster $\x_{Q_T}$ where $Q_T$ denotes the ordinary quiver of $B_T$.
		\end{proof}

	\subsection{Positivity in cluster algebras}\label{ssection:positivity}
		We now prove positivity results in cluster algebras using formula \eqref{eq:Lc}. We first recall the basic notions concerning positivity in cluster algebras.

		If $\P$ is an abelian group, we denote by $\Z\P$ its group ring. Let $n \geq 1$ be an integer and $\x=(x_1, \ldots, x_n)$ be an $n$-tuple of indeterminates. We denote by $\Z_{\geq 0}\P[\x^{\pm 1}]$ the semiring of subtraction-free Laurent polynomials in the variables in $\x$ with coefficients in $\Z\P$, that is, the set of elements of the form $$\frac{f(x_1, \ldots, x_n)}{x_1^{d_1} \cdots x_n^{d_n}}$$
		where $f$ is a polynomial in $n$ variables whose coefficients are non-negative linear combinations of elements of $\P$ and $d_1, \ldots, d_n \in \Z$. In particular, in the case when $\P= \ens 1$, the notation $\Z_{\geq 0}[\x^{\pm 1}]$ denotes the semiring of Laurent polynomials in variables $x_1, \ldots, x_n$ with non-negative coefficients.

		If $Q$ is a quiver without loops and 2-cycles, if $\P$ is any semifield and if $\y$ is a $Q_0$-tuple of elements of $\P$, it follows from the Laurent phenomenon that every cluster variable in the cluster algebra $\mathcal A(Q,\x_Q,\y)$ is an element of $\Z\P[\c^{\pm 1}]$ when expressed in any cluster $\c$ of $\mathcal A(Q,\x_Q,\y)$ \cite{cluster1}. The Fomin and Zelevinsky \emph{positivity conjecture}, stated in \cite{cluster1}, asserts that every cluster variable is actually an element of $\Z_{\geq 0}\P[\c^{\pm 1}]$. This conjecture was proved for rank two cluster algebras \cite{shermanz,MP:rangdeux,Dupont:positivityrank2}, cluster algebras arising from surfaces with or without punctures \cite{ST:unpunctured,S:unpunctured2,MSW:positivity}, cluster algebras with bipartite seeds \cite{Nakajima:cluster} and weaker versions were proved for acyclic cluster algebras \cite{ARS:frises,Qin,Nakajima:cluster}.

		\begin{corol}\label{corol:positivity}
			Let $(\mathcal Q,F,\CC,T)$ be a realisable quadruple and let $M$ be an unfrozen string module with respect to this quadruple. Then
			$$X^T_M \in \Z_{\geq 0}[{\x_{\overline{\supp(M)}}}^{\pm 1}].$$
		\end{corol}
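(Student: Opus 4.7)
The plan is to deduce positivity directly from Theorem \ref{theorem:main}, reducing the question to the manifest positivity of the matrix formula \eqref{eq:Lc}. By Theorem \ref{theorem:main}, we have
$$X^T_M = \frac{1}{\x^{\n_M}} L_M,$$
and since multiplication by the Laurent monomial $\x^{-\n_M}$ preserves the property of belonging to $\Z_{\geq 0}[\x_{\overline{\supp(M)}}^{\pm 1}]$, it suffices to prove that $L_M$ itself lies in $\Z_{\geq 0}[\x_{\overline{\supp(M)}}^{\pm 1}]$.

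To establish this, I would inspect the definition of $L_c$ given in equation \eqref{eq:Lc}. Every matrix of the form $A(\beta)$, $A(\beta^{-1})$ or $V_c(i)$ has entries that are either zero or a monomial with coefficient $1$ in the variables $x_j$. Moreover, by inspection of which arrows are used to build these matrices, the indices $j$ that appear are all points either on the walk $\s(M)$ itself or adjacent to it in $Q$, which is to say, they all lie in $\overline{\supp(M)}_0$. Since products of matrices with entries in $\Z_{\geq 0}[\x_{\overline{\supp(M)}}]$ again have entries in $\Z_{\geq 0}[\x_{\overline{\supp(M)}}]$, the scalar
$$\left[\begin{array}{cc} 1,1 \end{array}\right] \left( \prod_{i=0}^n A(c_i) V_{c}(i+1)\right) \left[\begin{array}{c}1 \\ 1 \end{array}\right]$$
belongs to $\Z_{\geq 0}[\x_{\overline{\supp(M)}}]$.

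Finally, dividing this non-negative polynomial by the monomial $\prod_{i=0}^n x_{t(c_i)}$ (whose factors all correspond to points of $\overline{\supp(M)}$) yields an element of $\Z_{\geq 0}[\x_{\overline{\supp(M)}}^{\pm 1}]$, as required. There is no serious obstacle in this argument: the whole content lies in Theorem \ref{theorem:main}, after which the positivity of $L_M$ is immediate from the fact that the matrix formula \eqref{eq:Lc} is visibly subtraction-free. Note that combining this corollary with the surjection of Theorem \ref{theorem:FK} then recovers the positivity of Euler characteristics of Grassmannians of submodules of string modules established in \cite{Cerulli:string, Haupt:string}, as advertised in the introduction.
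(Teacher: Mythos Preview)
Your argument is correct and matches the paper's own proof: both invoke Theorem \ref{theorem:main} and then observe that $L_M$ is manifestly subtraction-free because all the matrices $A(c_i)$, $V_c(i)$ have entries in the semiring $\Z_{\geq 0}[\x_{\overline{\supp(M)}}]$, so their product does too. You are in fact slightly more careful than the paper, which writes ``$X^T_M = L_M$'' and glosses over the harmless normalising monomial; your closing remark, however, is a bit off --- recovering the non-negativity of the Euler characteristics $\chi(\Gr_{\mathbf e}(M))$ does not require Theorem \ref{theorem:FK}, only a specialisation of the variables in $X^T_M$.
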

		\begin{proof}
			The set $\Z_{\geq 0}[\x_{\overline{\supp(M)}}]$ is a semiring for the usual sum and product. Thus, the set $M_2(\Z_{\geq 0}[\x_{\overline{\supp(M)}}])$ is also a semiring for the usual sum and product of matrices. It follows that $L_M \in \Z_{\geq 0}[{\x_{\overline{\supp(M)}}}^{\pm 1}]$ and by Theorem \ref{theorem:main}, $X^T_M=L_M$. This proves the corollary.
		\end{proof}	
		This corollary was also obtained by Cerulli and Haupt \cite{Haupt:string,Cerulli:string}.

		\begin{corol}
			Let $B_T$ be a cluster-tilted algebra. Then $X^T_M \in \Z_{\geq 0}[{\x_{\overline{\supp(M)}}}^{\pm 1}]$ for any string $B_T$-module $M$. \hfill \qed
		\end{corol}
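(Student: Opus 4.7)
The plan is to reduce this statement to the previously established Corollary \ref{corol:positivity} by embedding the cluster-tilted algebra $B_T$ into a realisable quadruple with empty frozen part. By definition, $B_T = \End_{\CC}(T)$ for some tilting object $T$ in a cluster category $\CC$ associated with an acyclic quiver. Example \ref{exmp:realacyclic} tells us precisely that if $Q_T$ denotes the ordinary quiver of $B_T$, then the quadruple $(Q_T, \emptyset, \CC, T)$ is realisable.

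Next, I would observe that since $F = \emptyset$ in this realisable quadruple, every $B_T$-module $M$ is automatically unfrozen: the condition that $M(i) \neq 0$ implies $i \notin F$ is vacuously satisfied. In particular, any string $B_T$-module $M$ is an unfrozen string module with respect to $(Q_T, \emptyset, \CC, T)$.

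With these two observations in place, Corollary \ref{corol:positivity} applies directly to yield $X^T_M \in \Z_{\geq 0}[\mathbf{x}_{\overline{\supp(M)}}^{\pm 1}]$, which is exactly the claim. There is no real obstacle here; the work has already been done in Theorem \ref{theorem:main} and its positivity consequence. The only thing to verify is that the cluster character $X^T_?$ appearing in Corollary \ref{corol:positivity} coincides with the cluster character attached to the cluster-tilted algebra $B_T$ via the cluster category $\CC$, which holds by construction. Thus the corollary follows essentially by citation.
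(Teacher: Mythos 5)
Your proof is correct and follows exactly the paper's intended argument: the paper presents this as an immediate consequence of Corollary \ref{corol:positivity}, relying precisely on Example \ref{exmp:realacyclic}, which already records that $(Q_T, \emptyset, \CC, T)$ is realisable and that every $B_T$-module is unfrozen when $F = \emptyset$. Nothing is missing; this is the same route.
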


		We now provide a new proof of the Fomin and Zelevinsky positivity conjecture for cluster algebras arising from unpunctured surfaces. We first set some notations. If $(S,M)$ is an oriented bordered marked surface, Fomin, Shapiro and Thurston associated with any triangulation $\Gamma$ of $(S,M)$ a quiver $Q_\Gamma$ \cite{FST:surfaces}. For any semifield $\mathbb P$ and any $(Q_\Gamma)_0$-tuple $\mathbf y$ of elements of $\P$, we denote by $\mathcal A(Q_\Gamma,\x_{Q_\Gamma},\mathbf y)$ the cluster algebra with initial seed $(Q_\Gamma,\x_{Q_\Gamma},\mathbf y)$ (see \cite{cluster4}). Such a cluster algebra does not depend on the choice of the triangulation $\Gamma$ of $(S,M)$, up to an isomorphism of $\Z\P$-algebras \cite{FST:surfaces}. It is called the \emph{cluster algebra arising from the surface} $(S,M)$ and is denoted by $\mathcal A(S,M)$.

		For this class of cluster algebras, Fomin and Zelevinsky positivity conjecture amounts to saying that any cluster variable $z$ in $\mathcal A(S,M)$ belongs to $\Z_{\geq 0}\P[\x_{Q_\Gamma}^{\pm 1}]$ for any choice of triangulation $\Gamma$ \cite[\S 3]{cluster1}. This conjecture was proved in \cite{ST:unpunctured,S:unpunctured2} for unpunctured surfaces and in \cite{MSW:positivity} for arbitrary surfaces. We now provide a new independent representation-theoretical proof of the positivity conjecture for cluster algebras arising from unpunctured surfaces.

		\begin{corol}\label{corol:positivitysurface}
			Let $(S,M)$ be an unpunctured surface. Then the positivity conjecture holds for $\mathcal A(S,M)$ equipped with an arbitrary choice of coefficients.
		\end{corol}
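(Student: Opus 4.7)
The plan is to realise the surface cluster algebra categorically in the principal-coefficient case, deduce positivity there from Corollary \ref{corol:positivity}, and then transfer positivity to arbitrary coefficients via the Fomin--Zelevinsky separation formula.

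Fix a triangulation $\Gamma$ of the unpunctured surface $(S,M)$ and let $Q_\Gamma$ be the associated quiver. By Labardini's construction (recalled in Example \ref{exmp:QP}), $Q_\Gamma$ carries a non-degenerate Jacobi-finite potential $W$ whose Jacobian algebra is a gentle algebra. Form the principal extension $(Q_\Gamma^{\pp},(Q_\Gamma)_0')$, which is a blown-up ice quiver with unfrozen part $Q_\Gamma$; by Corollary \ref{corol:glueing} it embeds into a realisable quadruple $(Q_\Gamma^{\pp},(Q_\Gamma)_0',\CC,T)$. By Theorem \ref{theorem:FK}, the cluster character $X^T_?$ surjects onto the set of non-initial cluster variables of $\mathcal A(Q_\Gamma^{\pp},(Q_\Gamma)_0')$. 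Since the glueing of Corollary \ref{corol:glueing} only attaches cluster categories of type $\A_1$, the algebra $B_T=\End_{\CC}(T)$ remains a gentle algebra, and every rigid indecomposable $B_T$-module is a string module (band modules always carry nontrivial self-extensions, cf.\ \cite{ButlerRingel:strings}). Corollary \ref{corol:positivity} then shows that each such cluster variable lies in $\Z_{\geq 0}[\x^{\pm 1}]$; combined with the trivial positivity of the initial cluster and coefficient variables, this proves the positivity conjecture for $\mathcal A(Q_\Gamma^{\pp},(Q_\Gamma)_0')$.

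For an arbitrary coefficient system $\mathbf y$ in a semifield $\P$, the Fomin--Zelevinsky separation-of-additions formula (\cite{cluster4}, Theorem 3.7) expresses each cluster variable $z$ of $\mathcal A(S,M)$ in the form
$$z \;=\; \frac{X_z(\x_{Q_\Gamma};\hat{\mathbf y})}{F_z|_{\P}(\mathbf y)},$$
where $X_z$ is the corresponding cluster variable in the principal-coefficient algebra, $F_z$ is its $F$-polynomial, and $\hat{\mathbf y}$ is the usual combination of the initial $\x$ and $\mathbf y$. Both $X_z$ and $F_z$ are subtraction-free by the principal-coefficient case just settled; since $F_z$ has constant term $1$, its tropical evaluation $F_z|_\P(\mathbf y)$ is a single monomial in $\P$. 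Hence $z \in \Z_{\geq 0}\P[\x_{Q_\Gamma}^{\pm 1}]$, yielding the positivity conjecture for arbitrary coefficients.

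The main obstacle lies in the passage from cluster variables to string modules, which is carried out via Corollary \ref{corol:positivity}. One must verify both that the gentleness of $\mathcal J_{(Q_\Gamma,W)}$ survives the principal blow-up of Corollary \ref{corol:glueing}, and that over this resulting string algebra every indecomposable rigid module is a string module rather than a band module. The first point requires an inspection of the Amiot glueing construction of Proposition \ref{prop:glueingQP} to check that attaching $\A_1$-pieces preserves the string-algebra structure; the second is the standard dichotomy for string algebras from \cite{ButlerRingel:strings}. Once these bookkeeping checks are in place, the remainder of the argument is a direct chaining of the results already established in the paper.
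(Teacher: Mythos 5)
Your strategy — reduce to principal coefficients via the Fomin--Zelevinsky separation formula, realise the principal-coefficient algebra through a glued 2-Calabi-Yau category, and invoke Corollary \ref{corol:positivity} — matches the paper's proof exactly. However, the middle step contains a genuine error in how you justify the passage to string modules.

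You assert that ``the algebra $B_T=\End_{\CC}(T)$ remains a gentle algebra'' after the principal blow-up, and then appeal to the string/band dichotomy for $B_T$-modules. This is overstated and in general false: the principal extension adds one extra arrow $i'\to i$ at \emph{every} unfrozen vertex $i$, so any vertex of $Q_\Gamma$ that already has two incoming arrows (which happens routinely for arcs lying in two triangles) acquires a third incoming arrow in $Q_\Gamma^{\pp}$, violating the degree condition defining string (and hence gentle) algebras. Your parenthetical remark that ``one must verify that gentleness survives the principal blow-up'' identifies the right worry, but the verification you anticipate does not go through; there is no reason for $B_T$ to be a string algebra.

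The fix is what the paper actually does: you do not need $B_T$ to be gentle, only the unfrozen part. By Theorem \ref{theorem:FK}, every non-initial cluster variable equals $X^T_M$ for $M$ an indecomposable rigid object that, under $\Hom_{\CC}(T,-)$, corresponds to an unfrozen $B_T$-module, i.e.\ a module over $\mathcal J_\Gamma$ (supported on $Q_\Gamma$). Since $\mathcal J_\Gamma$ is gentle, rigid indecomposable $\mathcal J_\Gamma$-modules are string modules, and these are then unfrozen string $B_T$-modules; this is precisely the hypothesis required by Corollary \ref{corol:positivity}. With that substitution your argument closes correctly, and the separation-formula reduction you sketch for arbitrary coefficients is exactly the one the paper uses.
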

		\begin{proof}
			Using the Fomin-Zelevinsky separation formula \cite[Theorem 3.7]{cluster4}, it is enough to prove the result for principal coefficients. Let $\Gamma$ be any triangulation of $(S,M)$ and let $(Q_\Gamma,W_\Gamma)$ be the quiver with potential associated with this triangulation as in \cite{ABCP} (see also \cite{Labardini:potentialssurfaces}). Let $\mathcal J_\Gamma = \mathcal J_{(Q_\Gamma,W_\Gamma)}$ be the corresponding Jacobian algebra which is finite dimensional \cite{ABCP}. It follows from Corollary \ref{corol:glueing} that the principal extension $(Q_\Gamma^\pp, (Q_\Gamma)_0')$ can be embedded in a realisable quadruple $(Q_\Gamma^\pp, (Q_\Gamma)_0', \CC, T)$. Now, let $z$ be a cluster variable in $\mathcal A(S,M)$, it follows from Theorem \ref{theorem:FK} that either $z$ belongs to the initial cluster $\x_{Q_\Gamma}$ or $z = X^T_M$ for some indecomposable rigid $\mathcal J_\Gamma$-module $M$. Since the algebra $\mathcal J_\Gamma$ is a string algebra, any indecomposable rigid $\mathcal J_\Gamma$-module is a string module. Thus, if we set $y_i = x_{i'}$ for any $i \in (Q_T)_0$, it follows from Corollary \ref{corol:positivity} that
			$$z = X^T_M \in \Z_{\geq 0}[{\x_{\overline{\supp(M)}}}^{\pm 1}] \subset \Z_{\geq 0}[\y^{\pm 1}][\x_{Q_\Gamma}^{\pm 1}] = \Z_{\geq 0}\P[\x_{Q_\Gamma}^{\pm 1}].$$
		\end{proof}

	\subsection{$SL(2,\Z)$ and total Grassmannians of submodules}
		\begin{defi}
			Given a finite dimensional $\k$-algebra $B$ and a finitely generated $B$-module $M$, the \emph{total Grassmannian of submodules of $M$} is the set
			$$\Gr(M) = \bigsqcup_{\mathbf e \in K_0(\modd B)} \Gr_{\mathbf e}(M).$$
		\end{defi}
		Since $M$ is finite dimensional, all but finitely many $\Gr_{\mathbf e}(M)$ are empty so that $\Gr(M)$ is a finite disjoint union of projective varieties. It is thus endowed with a structure of projective variety and we can consider its Euler-Poincar\'e characteristic $\chi(\Gr(M)) = \sum_{\mathbf e} \chi(\Gr_{\mathbf e}(M)) \in \Z$.

		For modules over hereditary algebras, these numbers can be computed with friezes, as shown in \cite[\S 4]{AD:algorithm}. Here we prove that, for string modules over 2-Calabi-Yau tilted algebras, these numbers can be computed with products of matrices in $SL(2,\Z)$.

		If $Q$ is any quiver, for any $\beta \in Q_1$, we define two matrices in $SL(2,\Z)$~:
		$$a(\beta)=\left[\begin{array}{cc}
			1 & 0 \\
			1 & 1
		\end{array}\right]
		\textrm{ and }
		a(\beta^{-1})=\left[\begin{array}{cc}
			1 & 1 \\
			0 & 1
		\end{array}\right]$$
		and for any walk $c=c_1 \cdots c_n$ in $Q$, we set
		$$
		l_c = \left[\begin{array}{cc} 1,1 \end{array}\right] \left( \prod_{i=1}^n a(c_i) \right) \left[\begin{array}{c}1 \\ 1 \end{array}\right] \in \N.$$
		Finally, if $c$ is a walk of length 0, we set $l_c =2$.

		\begin{prop}\label{prop:chiGrM}
			For any string module $M$ over a finite dimensional algebra, we have
			$$\chi(\Gr(M)) = l_{\s(M)}.$$
		\end{prop}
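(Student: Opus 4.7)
The plan is to derive the identity by specialising Theorem \ref{theorem:main} at $x_i = 1$ for every $i$. A direct inspection of formula \eqref{eq:Lc} shows that $L_c|_{x_i=1} = l_c$: the matrices $A(\beta)$, $A(\beta^{-1})$ collapse to $a(\beta)$, $a(\beta^{-1})$ respectively, each diagonal matrix $V_c(i)$ becomes the $2\times 2$ identity, and the prefactor $1/\prod x_{t(c_i)}$ reduces to $1$. Likewise the cluster character formula specialises to
\[
X^T_M|_{x_i=1} \;=\; \sum_{\mathbf e \in \N^{\mathcal Q_0}} \chi(\Gr_{\mathbf e}(M)) \;=\; \chi(\Gr(M)),
\]
and the normalising monomial $\x^{\n_M}$ specialises to $1$. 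Thus Theorem \ref{theorem:main} immediately yields $\chi(\Gr(M)) = l_{\s(M)}$ for any unfrozen string module in a realisable quadruple.

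To extend this to arbitrary string modules over arbitrary finite dimensional algebras, I would argue that both sides of the proposed equality depend only on the string $c = \s(M)$. This is manifest for $l_c$, and for $\chi(\Gr(M))$ it follows from the fact that arrows outside the string act as zero in a string module, so the submodule lattice, stratified by dimension vectors, is determined purely by $c$. It therefore suffices to exhibit, for the given string $c$, some module with string $c$ living inside a realisable quadruple. The blow-up construction of Section \ref{ssection:coverings} does exactly this: by Lemma \ref{lem:back2typeA} the blown-up ice quiver $(\widetilde{Q_M}, \d \widetilde M)$ has acyclic unfrozen part of Dynkin type $\A$, hence embeds in a realisable quadruple by the Remark in Section \ref{section:realisabledata}, and the representation $\widetilde M$ is an unfrozen string module whose string has the same sequence of arrow orientations as $c$, so $l_{\s(\widetilde M)} = l_c$ and $\chi(\Gr(\widetilde M)) = \chi(\Gr(M))$.

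The main delicate point is precisely the claim that $\chi(\Gr(M))$ depends only on $\s(M)$, which requires comparing submodule lattices of string modules with the same string living over different algebras; once this is granted, the remainder of the proof is a mechanical substitution in both sides of Theorem \ref{theorem:main}, in the same spirit as the derivation of Corollary \ref{corol:positivity}.
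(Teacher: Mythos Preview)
Your proof is correct and follows the same overall strategy as the paper: reduce via the blow-up $\widetilde M$ to a type $\A$ setting inside a realisable quadruple, apply Theorem~\ref{theorem:main}, and specialise all $x_i$ to $1$. The one place where the paper is more explicit is precisely your self-identified ``delicate point'': rather than arguing directly that the submodule structure of a string module depends only on the orientation pattern of the string, the paper invokes \cite[Theorem~1.2(a)]{Haupt:string} (already used in the proof of Proposition~\ref{prop:piX}) to obtain $\chi(\Gr(M)) = \chi(\Gr(\widetilde M))$, and then works in the coefficient-free quadruple $(Q,\emptyset,\CC_Q,\k Q)$ on the Dynkin type $\A$ support of $\widetilde M$ rather than in the blown-up ice quiver with frozen border; both choices lead to the same specialisation argument.
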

		\begin{proof}
			Let $B$ be a finite-dimensional algebra with ordinary quiver $Q$ and let $M$ be a string $B$-module. We use the notations of Section \ref{ssection:coverings}. Let $\widetilde M$ be the representation of the blow-up $\widetilde {Q_M}$ of $Q$ along $M$. Then $\widetilde M$ is a string representation of $\widetilde {Q_M}$ and we have $l_{\s(M)} = l_{\s(\widetilde M)}$. Moreover, it follows from \cite[Theorem 1.2 (a)]{Haupt:string} that 
			$$\sum_{\mathbf e \in \N^{Q_0}} \chi(\Gr_{\mathbf e}(M)) = \sum_{\mathbf e \in \N^{Q_0}} \sum_{\mathbf f \in \phi^{-1}(\mathbf e)}\chi(\Gr_{\mathbf f}(\widetilde M)) = \sum_{\mathbf f \in \N^{(\widetilde{Q_M})_0}} \chi(\Gr_{\mathbf f}(\widetilde M))$$
			so that the characteristic of the total Grassmannian of $M$ equals the characteristic of the total Grassmannian of $\widetilde M$. Since Grassmannians of submodules of $\widetilde M$ only depends on the support of $\widetilde M$, it is enough to prove the proposition for indecomposable modules over path algebras of Dynkin quivers of type $\mathbb A$.

			Let thus $B$ be the path algebra of a Dynkin quiver $Q$ of type $\A$, let $\CC$ be the cluster category of $Q$ and let $T=\k Q$. Then $(Q,\emptyset,\CC,T)$ is a realisable quadruple such that $B=\End_{\CC}(T)$. According to Theorem \ref{theorem:main}, we have
			$$X^T_M = \frac{1}{\mathbf x_Q^{\n_M}}L_M.$$
			Consider the surjective morphism of $\Z$-algebras.
			$$p : \left\{\begin{array}{rcll}
				\mathcal L(\x_Q) & \fl & \Z \\
				x_i & \mapsto & 1 & \textrm{ for any }i \in Q_0.
			\end{array}\right.$$
			Then it follows from the definition of the cluster character that $p(X^T_M) = \chi(\Gr(M))$.

			We now prove that $p(L_M) = l_{\s(M)}$. If the string $\s(M)$ of $M$ is of length zero, $M$ is simple and thus
			$$\chi(\Gr(M)) = \chi(\Gr_{[0]}(M))+\chi(\Gr_{[M]}(M))=1+1=2=l_{\s(M)}.$$
			Otherwise, we write $\s(M)=c_1 \cdots c_n$ with $n \geq 1$. Then, for any $i \in \ens{1, \ldots, n}$, we have $p(A(c_i)) = a(c_i)$ and $p(V_c(s(c_i))) = p(V_c(t(c_i)))$ is the identity matrix. Since $p$ is a ring homomorphism, it induces a ring homomorphism at the level of matrices and thus $p(L_M) = p(L_c) = l_{c}$. Since $p(\frac{1}{\mathbf x_Q^{\n_M}})=1$, it follows that
			$$\chi(\Gr(M)) = p(X^T_M) = p(L_M) = l_{\s(M)}$$
			and the proposition is proved.
		\end{proof}

		\begin{exmp}
			Let $(S,M)$ be an unpunctured surface. Let $\Gamma$ be a triangulation of $(S,M)$, let $(Q_\Gamma,W_\Gamma)$ be the quiver with potential associated with this triangulation and let $\mathcal J_\Gamma = \mathcal J_{(Q_\Gamma,W_\Gamma)}$ be the corresponding Jacobian algebra. Then for any string module $M$ over $\mathcal J_\Gamma$, we have $\chi(\Gr(M)) = l_{\s(M)}$.
		\end{exmp}


\section{More about the normalisation}\label{section:morenormalisation}
	We now describe some situations in which the normalisation can be omitted or computed combinatorially.

	\subsection{The hereditary case}
		We first observe that the normalisation can be omitted in several cases.

		\begin{lem}\label{lem:hereditary}
			Let $(\mathcal Q,F,\CC,T)$ be a realisable quadruple and let $M$ be an unfrozen string module. If the full subcategory of mod-$B_T$ formed by modules which are supported on $\overline{\supp(M)}$ is hereditary, then $\mathbf n_M = 0$.
		\end{lem}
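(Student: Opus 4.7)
The plan is to compare the two truncated Euler forms appearing in the definition of $n_i$ directly, exploiting the fact that under the hypothesis both are computed in hereditary settings. Since the unfrozen part of $\widetilde{Q_M}$ is of Dynkin type $\A$ (by Lemma \ref{lem:back2typeA}) and each frozen point of $\widetilde{Q_M}$ is incident to a single arrow, $\widetilde{Q_M}$ is acyclic, so $\k\widetilde{Q_M}$ is hereditary. Under the hypothesis of the lemma, the full subcategory of mod-$B_T$ formed by the modules supported on $\overline{\supp(M)}$ is also hereditary. In both settings, the truncated Euler form $\<-,-\>$ is additive on short exact sequences, hence descends to the Grothendieck group of the (sub)category; combined with the fact that $\dim\Ext^1(S_i,S_j) = |Q_1(i,j)|$ for simples over any bound quiver algebra, this yields
\[
\<S_i, M\>_{B_T} = \dim M(i) - \sum_{\alpha \in Q_1(i,-)} \dim M(t(\alpha))
\]
for $i \in \overline{\supp(M)}_0$, as well as the analogous formula
\[
\<S_j, \widetilde M\>_{\k\widetilde{Q_M}} = \dim \widetilde M(j) - \sum_{\beta \in (\widetilde{Q_M})_1(j,-)} \dim \widetilde M(t(\beta))
\]
for $j \in (\widetilde{Q_M})_0$.

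Substituting these two expressions into the definition of $n_i$ reduces the claim $n_i=0$ to the pair of combinatorial identities
\[
\sum_{j \in \Phi_0^{-1}(i)} \dim \widetilde M(j) = \dim M(i)
\qquad\text{and}\qquad
\sum_{j \in \Phi_0^{-1}(i)} \sum_{\beta \in (\widetilde{Q_M})_1(j,-)} \dim \widetilde M(t(\beta)) = \sum_{\alpha \in Q_1(i,-)} \dim M(t(\alpha)).
\]
The first is immediate from $\Phi_*(\widetilde M) \simeq M$. For the second, I would reindex the left-hand side by $\alpha := \Phi_1(\beta) \in Q_1(i,-)$ and argue that, for each such $\alpha$, the inner sum equals $\dim M(t(\alpha))$. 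The winding property of $\Phi$ forces the map $\beta \mapsto (s(\beta),t(\beta))$ on $\Phi_1^{-1}(\alpha)$ to be injective, while the explicit construction of $\widetilde{Q_M}$ guarantees that every walk point $v_\ell \in \Phi_0^{-1}(t(\alpha))$ is the target of a (necessarily unique) such arrow of $\widetilde{Q_M}$, either a walk arrow $\beta_{\ell-1}$ or $\beta_\ell$, or a copy of the form $\alpha_{v_\ell}$. Since $\widetilde M$ vanishes on all border points, only targets that are walk points contribute, and the inner sum collapses to $\sum_{v_\ell \in \Phi_0^{-1}(t(\alpha))} 1 = \dim M(t(\alpha))$, matching the right-hand side.

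The main subtle point will be justifying the hereditary expression for $\<S_i, M\>_{B_T}$: one must verify that the hypothesis of the lemma genuinely forces $\<-,-\>$ to be additive on short exact sequences inside the relevant subcategory, so that it descends to its Grothendieck group, and then confirm that the resulting pairing on classes of simples depends only on the arrows of $Q$ and not on the relations of $B_T$. Once this is in place, the remainder is the purely combinatorial verification sketched above, and we conclude that $n_i = 0$ for every $i \in \overline{\supp(M)}_0$, i.e. $\mathbf n_M = 0$.
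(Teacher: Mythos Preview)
Your proof is correct and follows essentially the same approach as the paper's: both arguments use the hereditary hypothesis to conclude that $\langle S_i, M\rangle$ depends only on dimension vectors, then match this against the corresponding expression on the blown-up side via $\Phi_*(\widetilde M)\simeq M$. The paper's proof is a two-line sketch of this idea, whereas you spell out the explicit hereditary formula $\langle S_i,M\rangle = \dim M(i) - \sum_{\alpha\in Q_1(i,-)}\dim M(t(\alpha))$ and carry out the combinatorial bookkeeping with the winding $\Phi$; the only point worth adding to your write-up is the one-line justification that $\Ext^1_{B_T}(S_i,M)$ coincides with $\Ext^1$ computed in the subcategory (because any extension of modules supported on $\overline{\supp(M)}$ is again supported there), which is what makes the hereditary hypothesis bite on the $B_T$-side.
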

		\begin{proof}
			Let $i \in \overline{\supp(M)}_0$, since $M$ and $S_i$ are supported on $\overline{\supp(M)}$, the truncated Euler form $\<S_i,M\>$ only depends on the dimension vectors. But since $\widetilde{Q_M}$ is acyclic, $\sum_{j \in \Phi_0^{-1}(i)}\<S_j,\widetilde M\>$ also only depends on the dimension vectors, it thus follows that $\<S_i,M\> = \sum_{j \in \Phi_0^{-1}(i)}\<S_j,\widetilde M\>$ and thus $\n_M=0$.
		\end{proof}

		\begin{corol}
			Let $(\mathcal Q,F,\CC,T)$ be a realisable quadruple such that $B_T$ is hereditary. Then $\mathbf n_M=0$ for every unfrozen string module $M$. \hfill \qed
		\end{corol}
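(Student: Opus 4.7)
The plan is to deduce this corollary directly from Lemma \ref{lem:hereditary}, which already provides the local statement: whenever the full subcategory of $\modd B_T$ consisting of modules supported on $\overline{\supp(M)}$ is hereditary, the normalising vector $\mathbf n_M$ vanishes. Thus the only thing to verify is that the global hypothesis ``$B_T$ is hereditary'' transports to this local hypothesis for every unfrozen string module $M$.

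First I would recall that an algebra $B_T$ is hereditary precisely when $\Ext^i_{B_T}(-,-)=0$ for all $i\geq 2$ on the whole of $\modd B_T$. Since this vanishing is inherited by any full subcategory, in particular by the full subcategory of $\modd B_T$ whose objects are the modules supported on $\overline{\supp(M)}$, the hypothesis of Lemma \ref{lem:hereditary} is automatically satisfied for every unfrozen string module $M$. Applying that lemma then gives $\mathbf n_M=0$ for each such $M$, which is exactly the content of the corollary.

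There is essentially no obstacle here: the statement is a one-line consequence of the previous lemma, and the only conceptual point is the observation that heredity passes to full subcategories. If one wanted to expand the argument, one could spell out that in the hereditary situation the truncated Euler form $\<S_i,-\>$ depends only on the dimension vector (which is the reason underlying Lemma \ref{lem:hereditary}), so that the compatibility between $\<S_i,M\>$ and $\sum_{j\in \Phi_0^{-1}(i)}\<S_j,\widetilde M\>$ reduces to the equality $\ddim M = \phi(\ddim \widetilde M)$ provided by the construction of the blow-up $\widetilde{Q_M}$. But since Lemma \ref{lem:hereditary} already encapsulates this, the proof reduces to a single invocation.
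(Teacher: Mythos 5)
Your proposal is correct and matches the paper's (essentially tacit) argument: the corollary is stated with an immediate $\qed$, and the intended reasoning is exactly the observation you make, namely that global heredity of $B_T$ forces the vanishing of $\Ext^{\geq 2}_{B_T}$ on any full subcategory of $\modd B_T$, so the hypothesis of Lemma~\ref{lem:hereditary} holds for every unfrozen string module $M$. Your optional expansion correctly recapitulates the mechanism inside Lemma~\ref{lem:hereditary} (the truncated Euler form depends only on dimension vectors in the hereditary case, and the blow-up construction preserves dimension vectors via $\phi$), but as you note the single invocation of the lemma suffices.
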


	\subsection{The maximal factor method for modules over cluster-tilted algebras}
		For any walk $c$ in a locally finite quiver $Q$ we set
		$$N_c = \left[\begin{array}{cc} 1,1 \end{array}\right] \left( \prod_{i=0}^n A(c_i) V_{c}(i+1)\right) \left[\begin{array}{c}1 \\ 1 \end{array}\right] \in \mathcal L(\x_Q)$$
		so that \eqref{eq:Lc} becomes
		$$L_c = \frac{1}{\prod_{i=0}^n x_{t(c_i)}} N_c.$$
		Let $\eta_c \in \N^{Q_0}$ be such that
		$$N_c = \x_Q^{\eta_c} P_c(\x_Q)$$
		with $P_c(\x_{Q})$ not divisible by any $x_i$ with $i \in Q_0$. 

		Let $\CC$ be a cluster category, $T=T_1 \oplus \cdots \oplus T_n$ be a tilting object in $\CC$, then $B_T = \End_{\CC}(T)$ is a cluster-tilted algebra and we denote by $Q$ its ordinary quiver. Then $(Q, \emptyset, \CC,T)$ is a realisable quadruple and every $B_T$-module is unfrozen. 

		We now prove that for such modules, we can compute the normalisation combinatorially~:
		\begin{prop}\label{prop:combnorm}
			Consider the above notations and assume moreover that $\End_{\CC}(T_i) \simeq \k$ for any $i \in \ens{1, \ldots, n}$. Then for every rigid string $B_T$-module $M$ we have $\n_M=\eta_{\s(M)}$.
		\end{prop}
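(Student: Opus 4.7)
The plan is to combine the explicit formula from Theorem~\ref{theorem:main} with the classical denominator theorem for rigid indecomposable modules over cluster-tilted algebras, in order to pin down $\n_M$ uniquely.

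First I would unpack the formula from Theorem~\ref{theorem:main}. Writing $c = \s(M)$ and extracting from $N_c$ its maximal monomial factor as $N_c = \x_Q^{\eta_c} P_c$ with $P_c$ a polynomial not divisible by any $x_i$, the identity $L_c = N_c/\prod_{i=0}^{n} x_{t(c_i)}$ combines with the observation
\[
\prod_{i=0}^{n} x_{t(c_i)} = x_{v_1} x_{v_2} \cdots x_{v_{n+1}} = \x_Q^{\ddim M},
\]
which holds since, by the very construction of the string module $M_c$ in Section~\ref{ssection:formula}, each vertex $v \in Q_0$ occurs among $v_1, \ldots, v_{n+1}$ with multiplicity $\dim_\k M(v)$. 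Substituting into Theorem~\ref{theorem:main} yields
\[
X^T_M = \frac{P_c(\x_Q)}{\x_Q^{\ddim M + \n_M - \eta_c}}.
\]

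Next I would invoke the denominator theorem: under the hypothesis $\End_\CC(T_i) \simeq \k$, for any rigid indecomposable $B_T$-module $M$ the reduced denominator of $X^T_M$ in the initial cluster $\x_Q$ is exactly $\x_Q^{\ddim M}$. Equivalently, there exists a polynomial $R(\x_Q)$ not divisible by any $x_i$ such that $X^T_M = R(\x_Q)/\x_Q^{\ddim M}$. Equating the two expressions then gives $P_c = R \cdot \x_Q^{\n_M - \eta_c}$, and since neither $P_c$ nor $R$ is divisible by any $x_i$, the monomial factor $\x_Q^{\n_M - \eta_c}$ must be trivial: a strictly positive coordinate of $\n_M - \eta_c$ would force $x_i \mid P_c$, while a strictly negative coordinate would force $x_i \mid R$. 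Thus $\n_M = \eta_c = \eta_{\s(M)}$.

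The main obstacle is justifying the denominator theorem in the required generality. For the hereditary case it is classical, due to Caldero-Keller; the extension to cluster-tilted algebras (which is what the hypothesis $\End_\CC(T_i) \simeq \k$ allows us to access) relies on Palu's analysis of the index of rigid objects and its compatibility with the dimension vector through the functor $\Hom_\CC(T,-)$. Locating the precise reference that handles all rigid string $B_T$-modules simultaneously, and verifying that the hypothesis $\End_\CC(T_i) \simeq \k$ is exactly what is needed, is the most delicate step of the argument; the rest of the proof is bookkeeping on monomial factors.
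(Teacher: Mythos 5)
Your proposal is correct and follows essentially the same route as the paper's proof: both arguments pivot on the denominator theorem for rigid indecomposable modules over cluster-tilted algebras with the hypothesis $\End_\CC(T_i)\simeq\k$, combined with the observation that $\prod_{i=0}^n x_{t(c_i)}=\x_Q^{\ddim M}$ for a string module and a comparison of monomial factors. The one place you hesitate --- ``locating the precise reference'' --- is resolved in the paper by citing Buan--Marsh--Reiten (the reference \cite{BMR3}, \emph{Denominators of cluster variables}), which gives $\delta(X^T_M)=(\dim\Hom_\CC(T_i,\overline M))_i$ directly; the paper then transports this through the equivalence $\Hom_\CC(T,-):\CC/(\add T[1])\to\modd B_T$ and the identification of $\dim\Hom_{B_T}(P_i,M)$ with the number of occurrences of $i$ along the string, rather than invoking Caldero--Keller or Palu as you suggest. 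Your attribution is slightly off, but the mathematical content is the same and the bookkeeping on monomial factors is carried out correctly.
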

		\begin{proof}
			For any Laurent polynomial $L(x_1, \ldots, x_n) \in \Z[x_1^{\pm 1}, \ldots, x_n^{\pm 1}]$, we denote by $\delta(L) \in \Z^{n}$ its \emph{denominator vector}, that is, the unique vector $(d_1, \ldots, d_n)$ such that
			$L(x_1, \ldots, x_n)=f(x_1, \ldots, x_n)/\prod_{i=1}^n x_i^{d_i}$ where $f(x_1, \ldots, x_n)$ is a polynomial not divisible by any $x_i$.
 			
			Since $\End_{\CC}(T_i) \simeq \k$ for any $i \in \ens{1, \ldots, n}$, it follows from \cite{BMR3} that for any indecomposable rigid $B_T$-module
			$$\delta(X^T_M) = (\dim \Hom_{\CC}(T_i,\overline M))_{1 \leq i \leq n}$$
			where $\overline M$ is an indecomposable rigid object in $\CC$ such that $\Hom_{\CC}(T,\overline M) = M$.
		
			Since $\Hom_{\CC}(T,-)$ induces an equivalence of categories $\CC/(\add T[1]) \simeq \modd B_T$, it follows that
			$$\delta(X^T_M) = (\dim \Hom_{B_T}(P_i,M))_{1 \leq i \leq n}$$
			where $P_i$ is the indecomposable projective $B_T$-module at the point $i$.
			For any $i \in \ens{1, \ldots, n}$, the dimension of $\Hom_{B_T}(P_i,M)$ is the multiplicity of the simple $B_T$-module $S_i$ as a composition factor of $M$. If $M$ is a string module, this multiplicity is also equal to the number of occurrences of $i$ along the string $\s(M)$. Thus, if $\s(M)=c_1 \cdots c_n$, the denominator of $X^T_M$ in its irreducible form is equal to $\prod_{i=0}^n x_{t(c_i)}$.
	
			On the other hand the denominator of $L_M$ (in its irreducible form) is $\x^{-\eta_{\s(M)}} \prod_{i=0}^n x_{t(c_i)}$. Now it follows from Theorem \ref{theorem:main} that $X^T_M = \frac{1}{\x^{\n_M}} L_M$ and thus the denominators coincide, that is to say $\n_M=\eta_{\s(M)}$.
		\end{proof}


\section{Examples}\label{section:examples}
	\subsection{A first example in type $\A_2$}
		We start with a very simple and detailed example which is nevertheless instructive in order to understand the behaviour of the normalisation.

		We consider the frozen quiver $(\mathcal Q,F)$ where
		$$\xymatrix{
				&& 3 \ar[ld]_{\gamma}\\
			\mathcal Q : & 1 \ar[rr]_{\alpha} && 2 \ar[lu]_{\beta}
		}$$
		and $F=\ens{3}$.

		The corresponding cluster algebra is of geometric type with initial seed $(\x,\y,B)$ where $\x=(x_1,x_2)$, $\y = (x_3)$ and
		$$B= \left[\begin{array}{rr}
			0 & 1 \\
			-1 & 0 \\
			1 & -1
		\end{array}\right]$$

		Its unfrozen part $Q$ is of Dynkin type $\A_2$. Note that $\mathcal Q$ is the ordinary quiver of a cluster-tilted algebra of type $\A_3$ whose relations are $\alpha \beta = \beta\gamma = \gamma \alpha = 0$. Thus $(\mathcal Q,F)$ can be embedded in a realisable quadruple $(\mathcal Q,F,\CC,T)$ where $\CC$ is a cluster category of type $\A_3$. There are three indecomposable modules supported on $Q$ which are the simple $S_1$, the simple $S_2$ and the projective $\k Q$-module $P_1$ and all these modules are string modules.

		Since $(\mathcal Q,F)$ is not blown-up (there are two arrows entering or leaving the frozen point 3), we cannot simply apply Theorem \ref{theorem:typeA}. So we apply Theorem \ref{theorem:main}, which means that we need to consider normalising factors.

		The string of $S_1$ is $e_1$ and thus, applying formula \eqref{eq:Lc} to $S_1$, we get $L_{S_1} = \frac{x_2 + x_3}{x_1}$. The quiver $\widetilde{Q_{S_1}}$ is $3^{\gamma;1} \fl v_1 \fl 2^{\alpha;1}$. It is easily computed that $\n_{S_1}=0$ so that $X_{S_1} = L_{S_1}$ which is indeed the cluster variable in $\mathcal A(\mathcal Q,F)$ corresponding to the simple module $S_1$. We similarly prove that $X_{S_2} = L_{S_2} = \frac{x_1 + x_3}{x_2}.$

		The case of $P_1$ is slightly more instructive. We apply the matrix formula and we get $L_{P_1} = \frac{x_3(x_1+x_2+x_3)}{x_1x_2}$. We can now use Proposition \ref{prop:combnorm} in order to conclude that the normalising factor is $x_3$ but we do the computation in order to see the blow-up technique working. The quiver $\overline{\supp(P_1)}$ is $\mathcal Q$ itself. Its blow-up along $P_1$ is thus $\widetilde {Q_{P_1}} : 3^{\gamma;1} \fl v_1 \fl v_2 \fl 3^{\beta;2}$ and $\widetilde{M}$ is the representation $\widetilde P_1 : 0 \fl \k \xrightarrow{1_\k} \k \fl 0$. We compute that
		$$\widetilde{L}_{\widetilde P_1} = \frac{x_{v_1}x_{3^{\gamma;1}}+x_{v_2}x_{3^{\beta;2}}+x_{3^{\gamma;1}}x_{3^{\beta;2}}}{x_{v_1}x_{v_2}}$$
		so that the morphism $\pi$ sending $x_{v_1}$ to $x_1$, $x_{v_2}$ to $x_2$ and $x_{3^{\gamma;1}},x_{3^{\beta;2}}$ to $x_3$ satisfies $\pi(\widetilde{L}_{\widetilde M}) = L_M$. And computing directly, or else applying Theorem \ref{theorem:typeA}, we get $\widetilde{X}_{\widetilde M} = \widetilde{L}_{\widetilde P_1}$.

		The normalisation of $M$ is $\n_M = (0,0,1)$ (see Example \ref{exmp:normalisation}) so that we get $X_{P_1} = \frac{1}{x_3} L_{P_1} = \frac{x_1+x_2+x_3}{x_1x_2}$ which is indeed the cluster variable in $\mathcal A(\mathcal Q,F)$ corresponding to this module.

	\subsection{Regular cluster variables in type $\widetilde \A$}
		Let $Q$ be a euclidean quiver of type $\widetilde \A$ equipped with an acyclic orientation. In \cite[Theorem 4]{ARS:frises}, the authors provided a combinatorial formula for expressing all but finitely many cluster variables in the coefficient-free cluster algebra associated with $Q$. The cluster variables they computed correspond in fact to the cluster characters associated with indecomposable postprojective $\k Q$-modules. Using their methods, it is possible to compute the cluster characters associated with indecomposable postprojective and preinjective $\k Q$-modules but not the remaining cluster variables, namely those which correspond to indecomposable regular rigid $\k Q$-modules. We now use Theorem \ref{theorem:typeA} to complete the formula and compute cluster variables associated with regular modules.

		It is known that the Auslander-Reiten quiver of mod-$\k Q$ contains at most two exceptional tubes (that is, tubes of rank $\geq 2$) and all the rigid indecomposable regular modules belong to these tubes. Every indecomposable regular module (rigid or not) in such a tube is a string module and its string may be completely described. For simplicity, we only compute the cluster variables associated with quasi-simple regular modules in such tubes, this can easily be extended to any module in one of the exceptional tubes. It is well-known (see for instance \cite[\S 5.2]{AD:algorithm}) that any such module is of the form
		$$\xymatrix{
			\cdots 0 & \ar[l]_0 \k \ar[r]^{1_k} & \k \ar[r]^{1_k} & \cdots \ar[r]^{1_k} & \k \ar[r]^{1_k} & \k & \ar[l]_0 0 \cdots
		}$$
		so that, if we depict locally the quiver $Q$ as
		$$\xymatrix{
			\cdots 0 & \ar[l] 1 \ar[r]^{\alpha_1} & 2 \ar[r]^{\alpha_2} & \cdots \ar[r]^{\alpha_{n-2}} & n-1 \ar[r]^{\alpha_{n-1}} & n & \ar[l] n+1 \cdots ,
		}$$
		the string is $c = \alpha_1 \cdots \alpha_{n-1}$ and we compute
		\begin{align*}
		L_c
			& = \frac{1}{\prod_{i=1}^{n+1}x_i}
			\left[\begin{array}{cc} 1,1 \end{array}\right]
			\left[\begin{array}{cc}
				x_0 & 0 \\
				0 & 1
			\end{array}\right]
			\left[\begin{array}{cc}
			\displaystyle \prod_{i=1}^{n-1} x_{i+1} & 0 \\
				\displaystyle \sum_{j=1}^{n-1} \frac{\prod_{i=1}^n x_i}{x_jx_{j+1}} & \displaystyle \prod_{i=1}^{n-1} x_{i}
			\end{array}\right]
			\left[\begin{array}{cc}
				1 & 0 \\
				0 & x_{n+1}
			\end{array}\right]
			\left[\begin{array}{c}1 \\ 1 \end{array}\right].
		\end{align*}
		Now, if $M_c$ denotes the quasi-simple regular module associated with the string $c$, then it follows from Lemma \ref{lem:hereditary} that $L_c$ is the cluster character corresponding to the module $M_c$ and thus, is a cluster variable in the cluster algebra $\mathcal A(Q,\emptyset)$.

	\subsection{An example for the $n$-Kronecker quiver with principal coefficients}
		Let $n \geq 2$ and let $K_n^{\pp}$ be the quiver
		$$\xymatrix{
					& 1' \ar[d] & 2' \ar[d] \\
			K_n^{\pp} : 	& 1 \ar@<+3pt>[r]^{\alpha_1} \ar@{..>}[r] \ar@<-3pt>[r]_{\alpha_n} & 2
		}$$
		with $n$ arrows $\alpha_1, \ldots, \alpha_n$ from 1 to 2. We set $F=\ens{1',2'}$ so that $K_n^{\pp}$ is the principal extension of the $n$-Kronecker quiver $K_n$. We fix two distinct arrows among $\alpha_1, \ldots, \alpha_n$ which we denote by $\alpha$ and $\beta$.

		For any $p \geq 1$, we define a string representation $M^p$ of $K_n$ by setting $M^p(1) = \k^p$, $M^p(2) = \k^{p+1}$, $M^p(\alpha) = 1_{\k^p} \oplus 0$ and $M^p(\beta) = 0 \oplus 1_{\k^p}$ and we view $M^p$ as a representation of $K_n^{\pp}$ which is supported on the unfrozen part $K_n$. For any $p \geq 1$, the string $\s(M^p)$ is $(\alpha^{-1}\beta)^p$ so that it has length $2p$.

		We write $y_1=x_{1'}$ and $y_2=x_{2'}$. Then, a direct computation shows that
		\begin{equation}\label{eq:LMp}
		L_{M^p} = \frac{1}{x_1^px_2^{p+1}} \left[\begin{array}{cc} 1,1 \end{array}\right]
		\left[\begin{array}{cc}
			1 & 0 \\
			0 & y_2x_1^{n-1}
		\end{array}\right]
		\left[\begin{array}{cc}
			y_1 + x_2^n & y_1 y_2x_1^{n-1} \\
			y_1x_1 & y_1y_2x_1^n
		\end{array}\right] ^p	
		\left[\begin{array}{c}1 \\ x_1 \end{array}\right].
		\end{equation}

		Applying Corollary \ref{corol:glueing}, we see that there exists a realisable quadruple $(K_n^{\pp}, (K_n)_0',\CC,T)$. Since $K_n^{\pp}$ is acyclic, it follows from \cite{KR:acyclic} that $B_T$ is hereditary and thus, Lemma \ref{lem:hereditary} implies that the normalisation vanishes. Thus, $X^T_{M^p} = L_{M^p}$ is given by the formula \eqref{eq:LMp} for every $p \geq 1$.

	\subsection{Cyclic cluster-tilted algebras of Dynkin type $\mathbb D$}
		Let $B$ be the quotient of the path algebra of the quiver
		\vspace{5mm}
		\[ \xymatrix{Q: & 1\ar[r]&2\ar[r]&3\ar[r]&\cdots\ar[r]&n\ar@/_20pt/[llll]}\]
		by the ideal generated by all paths of length $n-1$. Here we suppose that $n\ge 3$. $B$ is a cluster-tilted algebra of type $\mathbb{D}_n$ if $n\ge 4$ and a cluster-tilted algebra of type $\mathbb{A}_3 $ if $n=3$.
		Let $M$ be the indecomposable module given by the Loewy series
		$\begin{array}{c} 2 \vspace{-3pt} \\3 \vspace{-3pt}
		\\ \vdots  \vspace{-3pt}\\ m \vspace{-3pt} \end{array}$,
		where $2\le m\le n$.

		\begin{prop}
			Let $Q$ denote the above quiver and let $(\mathcal Q,F,\CC,T)$ be a realisable quadruple with unfrozen part $Q$. For any $i \in Q_0$, let, as in Section \ref{ssection:ARS},  \[
			y_i=\displaystyle\prod_{\alpha \in \mathcal Q_1(F,i)}x_{s(\alpha)} \quad \text{and} \quad z_i=\displaystyle\prod_{\alpha \in \mathcal Q_1(i,F)}x_{t(\alpha)}.
			\]
Let $M$ be as above. Then
			\begin{equation}\label{exfo1}
				L_M = \sum_{\ell=1}^m \frac{x_1x_2\cdots x_{m+1}}{x_\ell x_{\ell+1}} \left( \prod_{i=2}^\ell y_i \right)
				\left(\prod_{i=\ell+1}^m z_i \right) \frac{1}{\prod_{j=2}^m x_j}
			\end{equation}
			if $m<n$; and if $n=m$ then $L_M$ is given by \eqref{exfo1} divided by $x_1$.
		\end{prop}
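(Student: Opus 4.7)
The module $M$ is the string module whose string $c$ is the straight forward walk $c = \alpha_2 \alpha_3 \cdots \alpha_{m-1}$ from vertex $2$ to vertex $m$, and the plan is to compute $L_M = L_c$ directly from the matrix formula \eqref{eq:Lc}. Since every $c_i$ is an arrow (not a formal inverse), each matrix $A(c_i) = \bigl(\begin{smallmatrix}x_{i+2} & 0 \\ 1 & x_{i+1}\end{smallmatrix}\bigr)$ is lower triangular. The diagonal matrices $V_c(i)$ can be read directly from the local structure of the cyclic quiver—each vertex has a unique unfrozen incoming arrow and a unique unfrozen outgoing arrow, plus frozen contributions $y_i, z_i$—yielding $V_c(1) = \mathrm{diag}(z_2,\, x_1 y_2)$ (the incoming arrow $\alpha_1$ survives since $c_0$ is not an arrow), $V_c(i) = \mathrm{diag}(z_{i+1},\, y_{i+1})$ for $2 \leq i \leq m-2$, and $V_c(m-1) = \mathrm{diag}(x_{m+1} z_m,\, y_m)$ if $m < n$, while for $m = n$ the outgoing arrow at $v_{m-1} = n$ is $\alpha_n : n \to 1$, giving $V_c(m-1) = \mathrm{diag}(x_1 z_m,\, y_m)$.

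The full product $V_c(1)\, A(c_1)\, V_c(2) \cdots A(c_{m-2})\, V_c(m-1)$ is then lower triangular, so the scalar $[1,1]\cdot(\text{product})\cdot[1,1]^t$ is the sum of all its entries. This expands as a sum over monotone sequences $a_1 \geq a_2 \geq \cdots \geq a_{m-1}$ in $\{1,2\}^{m-1}$, where $a_k$ selects the top or bottom diagonal entry of $V_c(k)$ and the transition $A(c_k)$-entry $(A(c_k))_{a_k, a_{k+1}}$ contributes $x_{k+2}$ if $(a_k, a_{k+1}) = (1,1)$, $x_{k+1}$ if $(2,2)$, and $1$ if $(2,1)$; the case $(1,2)$ is impossible by lower triangularity. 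Such monotone $\{1,2\}$-valued sequences are parametrised by a unique ``switching point'' $p \in \{0, 1, \ldots, m-1\}$ with $a_k = 2$ for $k \leq p$ and $a_k = 1$ for $k > p$, giving exactly $m$ contributions—one per term of \eqref{exfo1}.

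A routine index calculation then shows that the weight of the path at switching point $p$ equals $\frac{x_1 x_2 \cdots x_{m+1}}{x_{p+1}\, x_{p+2}} \prod_{i=2}^{p+1} y_i \prod_{i=p+2}^m z_i$ when $m < n$, with the extreme cases $p = 0$ and $p = m-1$ handled by the boundary forms of $V_c(1)$ and $V_c(m-1)$. Identifying $\ell = p + 1$ and dividing by $\prod_{i=0}^{m-2} x_{t(c_i)} = \prod_{j=2}^m x_j$ yields \eqref{exfo1}. For $m = n$ the same expansion applies with $x_{m+1}$ replaced by $x_1$ throughout (the sole change being at $V_c(m-1)$), and the $1/x_1$ factor in the statement absorbs the resulting normalisation. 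The only real obstacle is the combinatorial bookkeeping needed to match each of the $m$ path weights with the correct $\ell$-term of \eqref{exfo1}—in particular, carefully tracking the boundary factors $x_1$ (from $V_c(1)$) and $x_{m+1}$ (from $V_c(m-1)$)—and verifying the precise $m = n$ normalisation on a small example (such as $m = n = 3$) before writing up the general argument.
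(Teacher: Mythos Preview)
Your approach is correct and genuinely different from the paper's. The paper proves the formula by induction on $m$: it peels off the last three factors of the product, writes
\[
\left[\begin{array}{cc} x_m&0\\1 &x_{m-1}\end{array}\right]
\left[\begin{array}{cc} z_m&0\\0 & y_m\end{array}\right]
\left[\begin{array}{c} x_{m+1}\\1\end{array}\right]
=\left[\begin{array}{c} x_m\\1\end{array}\right]x_{m+1}z_m
+\left[\begin{array}{c}0\\1\end{array}\right]x_{m-1}y_m,
\]
and recognises the first summand as producing $N_{M'}\,x_{m+1}z_m$ (by the inductive hypothesis for the shorter module $M'$), while the second summand is the ``all bottom'' term computed directly from the $(2,2)$-entries. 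Your direct path-model expansion---observing that every factor is lower triangular, so the nonzero paths are exactly the non-increasing $\{1,2\}$-sequences indexed by a single switching point $p\in\{0,\ldots,m-1\}$---bypasses the induction and identifies the $m$ terms of \eqref{exfo1} with the $m$ switching points in one stroke. This is cleaner and more structural: each term corresponds visibly to a submodule of $M$, and the boundary contributions $x_1$ (from $V_c(1)$) and $x_{m+1}$ (from $V_c(m-1)$) fall out automatically. The paper's induction, by contrast, is more elementary but hides this bijection behind the recursion.

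One caveat on the $m=n$ case. Your expansion, with $V_c(m-1)=\mathrm{diag}(x_1 z_n,\,y_n)$, gives $L_M$ equal to the right-hand side of \eqref{exfo1} with $x_{m+1}$ read cyclically as $x_1$; no further division by $x_1$ appears. The paper's statement ``$L_M$ is given by \eqref{exfo1} divided by $x_1$'' together with its one-line justification ``follows from the normalising factor'' is itself imprecise (the normalising factor pertains to $X^T_M$, not to $L_M$). Your remark that ``the $1/x_1$ factor in the statement absorbs the resulting normalisation'' inherits this imprecision. In a final write-up you should simply state what your computation actually yields for $m=n$---namely \eqref{exfo1} with the cyclic identification $x_{n+1}=x_1$---and, if you want to recover the paper's phrasing, separately compute $\mathbf n_M$ (via Proposition~\ref{prop:combnorm} or directly) to see where the extra $x_1$ enters for $X^T_M$.
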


		\begin{proof}
		We proceed by induction on $m$. If $m=2$, then $M$ is simple and $L_M$ is given by the exchange relation
		\begin{equation}\label{exfo2}
			L_M=\left({x_1y_2 + x_3 z_2}\right)\frac{1}{x_2}.
		\end{equation}
		On the other hand, (\ref{exfo1}) gives
		\[\left(\frac{x_1x_2x_3}{x_1x_2} z_2
		+\frac{x_1x_2x_3}{x_2x_3} y_2 \right) \frac{1}{x_2}, \]
		which is equal to the right-hand side of equation (\ref{exfo2}).

		Suppose now that $m>2$. Let $N_M$ be the numerator of the Laurent polynomial $L_M$.
		Our formula gives $N_M$ as the product of $2m-1$ matrices :
		\begin{equation}\label{exfo3}
			\left[ \begin{array}{cc} 1&x_1  \end{array} \right]
			\left[ \begin{array}{cc} z_2&0\\ 0 & y_2  \end{array} \right]
			\cdots
			\left[ \begin{array}{cc} x_m&0\\1 &x_{m-1}  \end{array} \right]
			\left[ \begin{array}{cc} z_m&0\\0 & y_m  \end{array} \right]
			\left[ \begin{array}{cc} x_{m+1}\\1 \end{array} \right].
		\end{equation}
		The product of the last three matrices in \eqref{exfo3} is equal to
		\[
		\left[ \begin{array}{cc}  x_mx_{m+1} z_m\\
		x_{m+1} z_m \ +\ x_{m-1} y_m  \end{array}  \right]
		=
		\left[ \begin{array}{cc}  x_m\\
		1\end{array}\right]x_{m+1} z_m  \ + \
		\left[\begin{array}{cc} 0\\1 \end{array}  \right]  x_{m-1} y_m  .
		\]
		Let us denote the product of the first $2m-4$ matrices by $
		\left[ \begin{array}{cc} a&b  \end{array} \right]$, so
		\[N_M= \left[ \begin{array}{cc} a&b  \end{array} \right] \left(
		\left[ \begin{array}{cc}  x_m\\
		1\end{array}\right]x_{m+1} z_m  \ + \
		\left[\begin{array}{cc} 0\\1 \end{array}  \right]  x_{m-1} y_m  \right).
		\]
		Now let $M'$ be the indecomposable module given by the Loewy series
		$\begin{array}{c} 2 \vspace{-3pt} \\3 \vspace{-3pt}
		\\ \vdots  \vspace{-3pt}\\ m-1 \vspace{-3pt} \end{array}$. Then, our formula gives $N_{M'}$ as a product of $2m-3$ matrices, and the first $2m-4$ matrices are just the same as the first $2m-4$ matrices in (\ref{exfo3}), and therefore
		\[ N_{M'} = \left[ \begin{array}{cc} a&b  \end{array} \right]  \left[ \begin{array}{cc} x_m\\1  \end{array} \right]. \]
		Thus
		\[ N_M =N_{M'}\,  x_{m+1} z_m  \ + \ \left[ \begin{array}{cc} a&b  \end{array} \right]
		\left[\begin{array}{cc} 0\\1 \end{array}  \right]  x_{m-1} y_m ,
		\]
		which, by induction, is equal to
		\[ 
\displaystyle\sum_{\ell =1}^{m-1} \frac{x_1x_2\cdots x_m}{x_\ell x_{\ell+1} }
		\left( \prod_{i=2}^\ell y_i \right)
		\left(\prod_{i=\ell+1}^{m-1}z_i \right) x_{m+1} z_m
		+
\displaystyle\frac{x_1x_2\cdots x_m}{x_{m-1} x_{m} }
		\left( \prod_{i=2}^{m-1} y_i \right)
		x_{m-1} y_m  ,
		\]
		which in turn is equal to
		\[\sum_{\ell =1}^{m-1} \frac{x_1x_2\cdots x_{m+1}}{x_\ell x_{\ell+1} }
		\left( \prod_{i=2}^\ell y_i \right)
		\left(\prod_{i=\ell+1}^{m}z_i \right)
		+ \frac{x_1x_2\cdots x_{m+1}}{x_{m} x_{m+1} }
		\left( \prod_{i=2}^{m} y_i \right) ,
		\]
		and this shows the formula (\ref{exfo1}).
		The statement for $m=n$ follows from the normalising factor.
		\end{proof}

\section*{Acknowledgments}
	Work on this problem was started during the \emph{2010 South American Meeting on Representations of Algebras and Related Topics} in Mar del Plata (Argentina); the authors wish to thank the organisers for their kind invitation.

	The first author gratefully acknowledges partial support from the NSERC of Canada, the FQRNT of Qu\'ebec and the Universit\'e de Sherbrooke.

	This paper was written while the second author was at the Universit\'e de Sherbrooke as a CRM-ISM postdoctoral fellow under the supervision of the first author, Thomas Br\"ustle and Virginie Charette.

	The third author is supported by the NSF grants DMS-0908765 and DMS-1001637 and by the University of Connecticut.

	The fourth author gratefully acknowledges support from the NSERC of Canada and Bishop's University.


\end{document}